\newtheorem*{lemma}{Lemma}
\newtheorem*{prop}{Proposition}
\newtheorem*{thm}{Theorem}
\newtheorem*{cor}{Corollary}
\newcommand{\iso}{\overset{\sim}{\rightarrow}}
\newcommand{\End}{\operatorname{End}}
\newcommand{\Hom}{\operatorname{Hom}}
\newcommand{\ad}{\operatorname{ad}}
\newcommand{\twoheaddownarrow}{\rotatebox[origin=c]{90}{$\twoheadleftarrow$}}
\newcommand{\nc}{\newcommand}
\nc{\Ker}{\operatorname{Ker}} \nc{\rker}{\operatorname{rKer}}
\nc{\im}{\operatorname{Im}}
\nc{\stab}{\operatorname {Stab}}
\nc{\ann}{\operatorname {Ann}}
\nc{\Id}{\operatorname {Id}}
\nc{\Ext}{\operatorname {Ext}}
\begin{document}


\title[Relative Yangians]{Modules for Relative Yangians (Family Algebras) and Kazhdan-Lusztig Polynomials}
\author[Anthony Joseph]{Anthony Joseph}\footnote {Work supported in part by Israel Science Foundation Grant, no. 710724.}

\date{\today}
\maketitle

\vspace{-.9cm}\begin{center}
Donald Frey Professional Chair\\
Department of Mathematics\\
The Weizmann Institute of Science\\
Rehovot, 76100, Israel\\
anthony.joseph@weizmann.ac.il
\end{center}\

%
\date{\today}
\maketitle

\

 Key Words:  Yangians, Bernstein-Gelfand equivalence.
\medskip

 AMS Classification: 17B35

\

\textbf{Abstract}

\

Let $\mathfrak g$ be a complex simple Lie algebra and let$\mod U(\mathfrak g)$ be the category of finite dimensional $U(\mathfrak g)$ modules.  The relative Yangian $Y_V(\mathfrak g)$ with respect to the pair $\mathfrak g,V: V \in\mod U(\mathfrak g)$ is defined to be the $\mathfrak g$ invariant subalgebra of $\End V \otimes U(\mathfrak g)$ with respect to the natural ``diagonal" action.  According to recent work (see \cite [Sect. 5]{KN} and references therein) the finite dimensional simple modules of the Yangians for $\mathfrak g= \mathfrak {sl}(n)$ or the twisted Yangians for $\mathfrak g =\mathfrak {sp}(2n), \mathfrak {so}(n)$ are described by the simple modules of relative Yangians $Y_V(\mathfrak g): V \in\mod U(\mathfrak g)$.

Here a classification of the simple modules of a relative Yangian is obtained simply and briefly as an advanced exercise in Frobenius reciprocity inspired by a Bernstein-Gelfand equivalence of categories \cite {BG}. An unexpected fact is that the dimension of these modules are determined by the Kazhdan-Lusztig polynomials and conversely the latter are described in terms of dimensions of certain extension groups associated to finite dimensional modules of relative Yangians.

\section{Introductory Preliminaries}

The ground field is assumed to be the complex field $\mathbb C$ throughout.  Generally speaking if a set is denote by $X$, then elements of $X$ are denoted by $x,x',x'',\ldots$.  If $M,N$ are $\mathbb C$ vector spaces, then we write simply write $\Hom_\mathbb C(M,N)$ as $\Hom(M,N)$.  If $A$ is an algebra and $M,N$ are $A$ modules, we shall sometimes write $\Hom_A(M,N)$ as $Hom(M,N)$, for example in Lemmas \ref {3.1} and \ref {7.3}.

\subsection{}\label{1.1}

Let $\mathfrak g$ be a finite dimensional Lie algebra and $U(\mathfrak g)$ its enveloping algebra.  The latter is a Hopf algebra and we let $\varepsilon$ (resp. $\sigma$) denote its augmentation (resp. antipode). One has $\sigma(x)=-x$, for all $x \in \mathfrak g$.  It is also known as the principal antiautomorphism of $U(\mathfrak g)$.

Given $u \in U(\mathfrak g)$, we write the image of $u$ under the coproduct on $U(\mathfrak g)$ as $u_1\otimes u_2$, with the understanding that this represents a finite sum.  It is the shorthand version of Sweedler's notation introduced in \cite [1.1.8]{J2}.

Let $V$ be a finite dimensional left $U(\mathfrak g)$ module, so there exists an algebra homomorphism $\Theta: U(\mathfrak g) \rightarrow \End V$. Consider $V^*$ as a left $U(\mathfrak g)$ module through transport of structure and the antipode $\sigma$. Define a left action of $U(\mathfrak g)$ on $V\otimes V^*$ through the above two actions and the coproduct, that is $(u,v\otimes \xi) \mapsto \Theta(u_1)v\otimes \Theta(\sigma(u_2))$.

Recall that there is an isomorphism $\Xi: v\otimes \xi \mapsto (v' \mapsto \xi(v')v)$ of $V\otimes V^*$ onto $\End V$ of $\End V$ bimodules.   Identify $\End V$ with $V\otimes V^*$ through $\Xi$. Then with respect to the natural $U(\mathfrak g)$ bimodule structure of $\End V$, the above left action of $U(\mathfrak g)$ on $V\otimes V^*$ transports to a left action of $U(\mathfrak g)$ on $\End V$ given by $(u,e)\mapsto (\ad u)e:=u_1e\sigma(u_2)$, for all $u \in U(\mathfrak g), e \in \End V$.  It is called the adjoint action and is compatible with multiplication.

Consider $U(\mathfrak g)$ as a left (resp. right) $U(\mathfrak g)$ module through left (resp. right) multiplication.  (Of course these are not compatible with multiplication in $U(\mathfrak g)$.)

Consider $\End V \otimes U(\mathfrak g)$ as a left
$U(\mathfrak g)$ module via the coproduct on $U(\mathfrak g)$, and the above given left actions on the two factors. Consider $\End V \otimes U(\mathfrak g)$ as a right $U(\mathfrak g)$ module via right multiplication on the second factor (and the trivial action on the first factor). This makes $\End V \otimes U(\mathfrak g)$, a $U(\mathfrak g)$ bimodule and hence a $U(\mathfrak g)\otimes U(\mathfrak g)=U(\mathfrak g \times \mathfrak g)$ module via the antipode $\sigma$ on $U(\mathfrak g)$.

Let $\mathfrak k$ denote the diagonal copy $(x,x):x \in \mathfrak g$ of $\mathfrak g$ in $\mathfrak g \times \mathfrak g$. Let $\textbf{G}$ be the connected simply-connected algebraic group with Lie algebra $\mathfrak g$. Let $\textbf{K}$ be the unique closed subgroup of $\textbf{G} \times \textbf{G}$ with Lie algebra $\mathfrak k$.

The Yangian of $\mathfrak g$ relative to $V$ is defined to be the subalgebra
 $Y_V(\mathfrak g)=(\End V \otimes U(\mathfrak g))^\mathfrak k$ of $\End V \otimes U(\mathfrak g)$.

Of course the $Y_V(\mathfrak g)$ are a very natural objects to study. They were called ``family algebras" by Kirillov in \cite {Ki1}.  However in this and subsequent papers \cite {Ki2}, \cite {R} little of their representation theory was developed and certainly not anything resembling our main result (Theorem \ref {1.7}).  In this it is particularly notable that there is no mention of these works in either \cite {KN} or \cite {KNV} which were the source of our present inspiration.

We could have given $\End V \otimes U(\mathfrak g)$ a more ``symmetric" $U(\mathfrak g)$ bimodule structure.  Namely equip $\End V$ with its natural left (resp. right) $U(\mathfrak g)$ module structure as above  and then consider $\End V \otimes U(\mathfrak g)$ as a left (resp. right) $U(\mathfrak g)$ module via the coproduct on $U(\mathfrak g)$.  This is \textit{not} the same as the previous bimodule structure. However the resulting $\mathfrak k$ module structures coincide. (Of course this is well-known; but some more details can be found in \ref {5.0}.)  Thus the invariant subalgebra $Y_V(\mathfrak g)=(\End V \otimes U(\mathfrak g))^\mathfrak k$ is the same in both cases.

In Theorem \ref {1.7} we classify the set of simple  $Y_V(\mathfrak g)$ modules determining also (\ref {6.7}) their dimensions (in terms of Kazhdan-Lusztig polynomials).  As a bonus we show that certain of their extension groups with respect to analogues of Verma modules are determined by and determine these polynomials (Corollary \ref {7.6}).

The above results reinterpret and notably extend to exceptional Lie algebras some results of Khoroshkin and Nazarov (see Remarks and references in \ref {2.1}). Here we mention that the present notion of a relative Yangian is fairly explicit in their work as is also the study of finite dimensional simple modules of Yangians and of twisted Yangians through relative Yangians.  Earlier work on the study of such representations can be found in the book of Molev \cite {M}.

The way the results on simple modules for relative Yangians can be used to describe finite dimensional representations of the Yangians $Y(\mathfrak g)$  for $\mathfrak g= \mathfrak {sl}(n)$ or of the twisted Yangians $Y(\mathfrak g)$ for $\mathfrak g =\mathfrak {sp}(2n), \mathfrak {so}(n)$ is discussed in \cite {KN}.  We summarize this briefly below.  In this $\mathfrak g$ is assumed to be of classical type.

Let $V$ be a locally finite $\mathfrak g$ module. Since $\mathfrak g$ is reductive, $V$ is a direct sum of finite dimensional modules and as a consequence we may for our purposes assume that $V$ is finite dimensional.  Now let $H(V)$ denote either the Clifford algebra $C(V)$ of $V$ or the Weyl algebra $D(V)$ of $V$.  As is well-known $C(V)$ is at most the sum of two endomorphisms rings of finite dimensional $\mathfrak g$ modules (which can be explicitly constructed from $V$). However the stucture of $D(V)$ is quite different and in particular it has no finite dimensional modules.

Similar to the case when $H(V) =\End V$  consider the algebra $H(V)\otimes U(\mathfrak g)$ as a left and a right $U(\mathfrak g)$ module under diagonal action (that is via the coproduct) on both factors and then as a
$\mathfrak k$ module.  For one of the two possible choices of $H(V)$ (determined by a parameter $\theta$ specified in \cite [0.2]{KN}) there exists \cite [Props. 4.1, 4.2]{KN} an Olshanski homomorphism of $Y(\mathfrak g)$ onto  $(H(V)\otimes U(\mathfrak g))^\mathfrak k$.    In this it is \textit{not} asserted that every homomorphism of $Y(\mathfrak g)$ to a finite dimensional matrix ring factors through some Olshanski homomorphism.  Consequently it is not immediate that every finite dimensional module $Y(\mathfrak g)$ obtains as a finite dimensional module for some $(H(V)\otimes U(\mathfrak g))^\mathfrak k$. However the version of Theorem \ref {1.7} given by Khoroshkhin and Nazaraov \cite {KN} (see also \ref {2.1}, Remarks) gave a list of simple finite dimensional modules for $Y(\mathfrak g)$ which the authors showed to coincide with list obtained (by several groups of authors \cite [Chaps. 3,4]{M} and references therein) using highest weights and Drinfeld polynomials. (Here only the case $H(V)=C(V)$ is needed to describe the \textit{finite dimensional} simple modules.)  The reader may consult \cite [Sect. 5]{KN} for the details of this analysis.

 Some of the theory developed here for $H(V)=\End V$ (which immediately applies to the case $H(V)=C(V)$) goes through for $H(V)=D(V)$.  However unlike the conclusion of Theorem \ref {1.7}(i) we cannot expect that every simple $(D(V) \otimes U(\mathfrak g))^\mathfrak k_\lambda$ (with as before $\lambda \in \mathfrak h^*$ a dominant element defining the central character) takes the form $\Hom_{U(\mathfrak g)}(M(\lambda), S(V) \otimes L(\mu))$.  Nor can one expect the remaining parts of Theorem \ref {1.7} to hold. Indeed already the representation theory of $D(V)$ is highly pathological.  At the very least it will be necessary to restrict to a subcategory of  $(D(V) \otimes U(\mathfrak g))^\mathfrak k$ modules.  Besides this, the natural analogue of Lemma \ref {5.0} fails.  Thus whilst $\End V$ (for $V$ finite dimensional) is isomorphic to $V\otimes V^*$ as an $\End V$ bimodule, $D(V)$ is only isomorphic to $S(V)\otimes S(V^*)$ as a vector space but not as a $D(V)$ bimodule using the natural action of $D(V)$ on the two factors.  The former result is an essential part of our analysis and plays a role in several steps, for example in \ref {2.4} and in \ref {5.0}.

 We may conclude from the above discussion that there is still much more to be done especially in the context of studying representations of Yangians.  However the present theory discussing the representation theory of relative Yangians is fairly complete.

\subsection{}\label{1.2}

In the sequel we shall fix $V$ and simply write $A=\End V \otimes U(\mathfrak g), B=Y_V(\mathfrak g)$.

Since $\End V$ is a $U(\mathfrak g)$ module under adjoint action
we may also form the semi-direct (also called smash) product $A^\ltimes:=\End V \ltimes U(\mathfrak g)$. It is $A$ as a vector space but with  multiplication given by $(e\otimes u).(e'\otimes u')=e(\ad u_1)e'\otimes u_2u'= eu_1e'\sigma(u_2)\otimes u_3u'$. One checks that the map $\zeta:e\otimes u \mapsto eu_1\otimes u_2$ extends to an algebra isomorphism of
$A^\ltimes:=\End V \ltimes U(\mathfrak g)$ onto $A:=\End V \otimes
U(\mathfrak g)$, with inverse $e\otimes u \mapsto e\sigma(u_1)\otimes u_2$.

Take $e'\otimes u' \in B$, using also the convention that this may represent a finite sum.  Then with respect to \textit{smash} product multiplication $(e\otimes u).(e'\otimes u')=e(\ad u_1)e' \otimes u_2u'=e(\ad u_1)e'\otimes (\ad u_2)u'u_3=e\varepsilon(u_1)e'\otimes u'u_2=ee'\otimes u'u$, since $(\ad u_1)e'\otimes (\ad u_2)u'=\varepsilon(u_1)(e'\otimes u')$ by the ad-invariance of $e'\otimes u'$. (Of course a little familiarity with Hopf algebra calculations is needed here.  For this the reader may consult \cite [1.1.8]{J2} or indeed any text on Hopf algebras.) Through the principal antiautomorphism
of $U(\mathfrak g)$ it follows that the algebra structure of $B$
is independent of whether its algebra structure is derived from
taking invariants in $A$ or in $A^\ltimes$.

One may note also that the left (resp. right) action of $U(\mathfrak g)$ on $A$ defined in \ref {1.2} is just that given by left (resp. right) multiplication by $\Id_V \otimes U(\mathfrak g)$ with respect to the \textit{smash product}.  This is the first reason for our choice of bimodule structure in \ref {1.1}.


\subsection{}\label{1.3}

If $M$ is a (left) $U(\mathfrak g)$ module then we shall always consider $V\otimes M$ as an $A$ module under, component-wise action $\phi$ and as a $A^\ltimes$ module through the action $\phi^\ltimes:((e\otimes a),(v \otimes m))\mapsto (e\otimes a).(v \otimes m) :=ea_1v\otimes a_2m$.  It is easily verified that the diagram

$$
\begin{array}{ccc}
 A^\ltimes \otimes (V \otimes M)  & \stackrel{\zeta}{\iso}& A \otimes (V\otimes M) \\
 \phi^\ltimes\downarrow &&  \downarrow \phi \\
 V\otimes M& = &  V\otimes M
\end{array}
$$
is commutative.

\subsection{}\label{1.4}

Let $Z(\mathfrak g)$ denote the centre of $U(\mathfrak g)$.  It is clear that $\Id_V \otimes Z(\mathfrak g)$ belongs to the centre $Z(B)$ of $B$.  By the Kostant separation theorem \cite [8.2.4]{D}, it follows that $B$ is a finite free module over $\Id_V \otimes Z(\mathfrak g)$. Then as is well-known its simple modules are finite dimensional.  Indeed suppose that the rank of $B$ is $< 2n$. Then the standard identity $S_{2n}$ is an identity for $B$, the proof being as in \cite [Lemma 6.2.2]{H}. Combining the Jacobson density theorem \cite [Thm. 2.1.2]{H} with a result of Kaplansky \cite [Lemma 6.3.1]{H}, it follows that every simple $B$ module has dimension $\leq n$.

\subsection{}\label{1.5}

Fix a Cartan subalgebra $\mathfrak h$ of $\mathfrak g$ and let $W$ be the Weyl group corresponding to the pair $(\mathfrak g, \mathfrak h)$. Let $\Delta$ denote the set of non-zero roots. Given $\alpha \in \Delta$, let $\alpha^\vee \in \mathfrak h$ be the corresponding coroot and $s_\alpha$ the corresponding reflection.  Let $W$ denote the (Weyl) group that the $s_\alpha:\alpha \in \Delta$ generate. Fix a set $\pi\subset \Delta$ of simple roots and set $\Delta^\pm:=\Delta \cap \pm \mathbb N\pi$. Let $\rho$ denote the half sum of the roots in $\Delta^+$.  Recall that we have a translated action of $W$ on $\mathfrak h^*$ defined by $w.\lambda=w(\lambda +\rho)-\rho$, for all $\lambda \in \mathfrak h$.

Fix $\lambda \in \mathfrak h^*$. Set $\Delta_\lambda: =\{\alpha \in \Delta|\alpha^\vee(\lambda) \in \mathbb Z\}$ and $W_\lambda: =\{ w \in W|w\lambda - \lambda \in \mathbb Z\pi\}$.  As noted by Jantzen  \cite [Satz 1.3]{Ja} $\Delta_\lambda$ is a root system with Weyl group $W_\lambda$.

Call $\lambda \in \mathfrak h^*$ dominant when $\alpha^\vee(\lambda+\rho) \in \mathbb N$, for all $\alpha \in \Delta^+_\lambda$.  Call $\lambda \in \mathfrak h^*$ regular if $\stab_{W.}\mathfrak \lambda:=\{w \in W|w.\lambda=\lambda\}$ is reduced to the identity. Let $P$ (resp. $P^+$) denote the set of weights (resp. dominant weights).

Set $\Lambda=\lambda +P$.   It is clear that $W_\mu=W_\lambda, \Delta_\mu=\Delta_\lambda$, for all $\mu \in \Lambda$.

We say that $\lambda$ is in general position when $W_\lambda$ is reduced to the identity.

Let $M(\lambda)$ denote the Verma module with highest weight $\lambda$.  Let $L(\lambda)$ denote its unique simple quotient.

\subsection{}\label{1.6}

Let $S$ be a simple (and hence finite dimensional) $B$ module. By Schur's lemma, $Z(B)$ acts by scalars on $S$ and hence by \ref {1.4} defines a maximal ideal of $Z(\mathfrak g)$.  As noted in \cite [7.4.7]{D}, any such maximal ideal is given as $\ann_{Z(\mathfrak g)}M(\lambda)$, for some unique dominant element $\lambda \in \mathfrak h^*$.

\

In the sequel we shall fix $\lambda \in \mathfrak h^*$ dominant and consider only $B$ modules annihilated by $Id_V \otimes \ann_{Z(\mathfrak g)}M(\lambda)$.

Recall \cite [8.4.3]{D} that $\ann_{U(\mathfrak g)} M(\lambda)=U(\mathfrak g)\ann _{Z(\mathfrak g)}M(\lambda)$.  We further set $U_\lambda:=U(\mathfrak g)/\ann_{U(\mathfrak g)} M(\lambda)$ and $A_\lambda:=\End V \otimes  U_\lambda, B_\lambda:= A_\lambda^\mathfrak k$.

The $U(\mathfrak g)$ bimodule structure of $A$ descends to $A_\lambda$.  Through its definition $A_\lambda$ is annihilated by the right action of $\Id_V \otimes \ann_{Z(\mathfrak g)}M(\lambda)$ but \textit{not} by its left action.  On the other hand $B_\lambda$ is annihilated by both the left and the right action of $\Id_V \otimes \ann_{Z(\mathfrak g)}M(\lambda)$.

Clearly a $B$ module $M$ is a $B_\lambda$ module if and only if it is annihilated $\Id_V \otimes \ann_{Z(\mathfrak g)}M(\lambda)$.

\subsection{}\label{1.7}

We shall prove the following theorem which describes all simple $B_\lambda$ modules.  In this we consider $V \otimes L(\mu):\mu \in \Lambda$ as a $U(\mathfrak g)$ module through the coproduct on $U(\mathfrak g)$.

Fix $\lambda \in \mathfrak h^*$ dominant.  Then $\Hom (M(\lambda), V\otimes L(\mu))$ is an $A$ module, hence an $A^\ltimes$ module, through the action on the second factor. It is also a right $U_\lambda$ module by the action on the first factor. Thus $S(\mu):=\Hom_{U(\mathfrak g)}(M(\lambda), V\otimes L(\mu))$, is a $B_\lambda$ module by restriction.

\begin {thm}

\

(i)  Every simple $B_\lambda$ module takes the form $S(\mu)$, for some  $\mu \in \Lambda$.

\

(ii) If $S(\mu) \neq 0$, then it is a simple $B_\lambda$ module.

\

(iii) If two simple $B_\lambda$ modules $S(\mu),S(\mu^\prime)$ are isomorphic, then $\mu = \mu^\prime$.

\end {thm}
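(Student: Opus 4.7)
My approach is a Frobenius-reciprocity adaptation of the Bernstein--Gelfand equivalence, combined with the Morita equivalence between $A_\lambda=\End V\otimes U_\lambda$ and $U_\lambda$ (the Morita bimodule being $V$ itself). As setup: by \ref{1.3} and \ref{1.2}, each $V\otimes L(\mu)$ is an $A$-module under the componentwise action, and $B_\lambda\subseteq A_\lambda$ acts by $\mathfrak g$-equivariant endomorphisms (the $\mathfrak k$-action on $A$ being compatible with the diagonal $\mathfrak g$-action on $V\otimes L(\mu)$); post-composition therefore makes $S(\mu)=\Hom_{U(\mathfrak g)}(M(\lambda),V\otimes L(\mu))$ into a $B_\lambda$-module.

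For (ii), let $0\neq N'\subseteq S(\mu)$ be a $B_\lambda$-submodule and form $T':=\sum_{f\in N'}\operatorname{Im}(f)\subseteq V\otimes L(\mu)$. Then $T'$ is $\mathfrak g$-stable (by $\mathfrak g$-equivariance of the $f$'s) and $B_\lambda$-stable (by $B_\lambda$-stability of $N'$). The central technical lemma is that $B_\lambda$ together with the diagonal $U(\mathfrak g)$-action generate the full $A_\lambda$-action on $V\otimes L(\mu)$---a Kostant--Harish-Chandra-type statement about the bimodule $\End V\otimes U(\mathfrak g)$---so $T'$ is in fact $A_\lambda$-stable. As $L(\mu)$ is a simple $U_\lambda$-module, $V\otimes L(\mu)$ is a simple $A_\lambda$-module by Morita, whence $T'=V\otimes L(\mu)$; comparing $\lambda$-highest-weight subspaces yields $N'=S(\mu)$. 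Part (iii) follows similarly: an isomorphism $S(\mu)\cong S(\mu')$ upgrades, by the same mechanism, to a nonzero $A_\lambda$-equivariant map $V\otimes L(\mu)\to V\otimes L(\mu')$, which by Morita reduces to a nonzero $U_\lambda$-map $L(\mu)\to L(\mu')$ between simples, forcing $\mu=\mu'$.

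For (i), let $N$ be a simple $B_\lambda$-module (finite-dimensional by \ref{1.4}). Form the induced module $\tilde N:=A_\lambda\otimes_{B_\lambda}N$; by Morita, $\tilde N\cong V\otimes W$ for a nonzero $U_\lambda$-module $W$. Extract a simple composition factor $L(\mu)$ of $W$ with $\mu\in\Lambda$; the resulting $A_\lambda$-surjection $\tilde N\twoheadrightarrow V\otimes L(\mu)$ unwinds, via Frobenius reciprocity for $B_\lambda\subseteq A_\lambda$ combined with a highest-weight analysis, into a nonzero $B_\lambda$-map $N\to S(\mu)$, which by (ii) and simplicity of $N$ is an isomorphism.

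\textbf{Main obstacle.} The key technical input is the generation lemma in (ii), namely that $B_\lambda$ together with the diagonal $U(\mathfrak g)$-action generates the full $A_\lambda$-action on $V\otimes L(\mu)$. This is the relative-Yangian analogue of Kostant's separation theorem (cf.~\ref{1.4}) and should follow from the $\mathfrak g$-adjoint isotypic decomposition of $\End V\otimes U(\mathfrak g)$ paired with the harmonic decomposition of $U(\mathfrak g)$. A subsidiary obstacle in the exhaustion step (i) is ensuring that the induced $U_\lambda$-module $W$ admits a composition factor of highest-weight type; once both points are in place, the rest is a clean exercise in Frobenius reciprocity and Morita equivalence.
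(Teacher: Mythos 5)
Your central technical lemma --- that the images of $B_\lambda$ and of the diagonal $U(\mathfrak g)$ generate the image of $A_\lambda$ in $\End(V\otimes L(\mu))$ --- is not merely unproven; it is false in general. The point is that $B=A^{\mathfrak k}$ \emph{commutes} with the diagonal action of $\mathfrak g$ on $V\otimes L(\mu)$ (this is exactly why $S(\mu)=\Hom_{U(\mathfrak g)}(M(\lambda),V\otimes L(\mu))$ is a $B$-module by post-composition). Hence every operator in the subalgebra generated by $B_\lambda$ and $\Delta(U(\mathfrak g))$ preserves each $\mathfrak g$-isotypic component of $V\otimes L(\mu)$. For $\mu$ in general position $V\otimes L(\mu)=\oplus_\nu M(\mu+\nu)$ is a direct sum of pairwise non-isomorphic simple $\mathfrak g$-modules, so each summand is stable under your subalgebra, whereas $V\otimes L(\mu)$ is a \emph{simple} $A$-module (Lemma \ref{2.3}), so $A$ does not preserve the summands. (Already at the algebra level one sees the failure: in $A^\ltimes$ the subalgebra generated by $B$ and $\Id_V\otimes U(\mathfrak g)$ is just $B\cdot(\Id_V\otimes U(\mathfrak g))$, and the coefficient of any nonzero $\ad$-weight vector of $\End V$ in such a product lies in the augmentation ideal, so e.g.\ $e\otimes 1$ is never reached.) This is precisely the kind of surjectivity statement for a generalized Harish-Chandra map that the paper explicitly sets out to avoid (see the Remarks in \ref{2.1} concerning \cite{KNV}). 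With the lemma gone, your $T'$ need not be $A_\lambda$-stable, and even if it were, the final step of (ii) is a non sequitur: knowing $\sum_{f\in N'}\im f=V\otimes L(\mu)$ does not give $N'=S(\mu)$, since a single $f$ can already generate all of $V\otimes L(\mu)$ without $N'$ exhausting $\Hom(M(\lambda),V\otimes L(\mu))$.

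There are also gaps in (i) that your Morita reduction does not close. Writing $A_\lambda\otimes_{B_\lambda}N\cong V\otimes W$, nothing yet places $W$ in category $\mathscr O$, so there is no reason for $W$ to have a highest-weight composition factor $L(\mu)$ with $\mu\in\Lambda$; the paper gets this by equipping $A\otimes_BN$ with a \emph{right} $U(\mathfrak g)$-action (\ref{4.1}), showing it lies in the Harish-Chandra category $\mathscr H^V_\lambda$, and invoking Corollary \ref{3.4}. Moreover, plain Frobenius reciprocity for $B_\lambda\subset A_\lambda$ produces a $B_\lambda$-map $N\to V\otimes L(\mu)$ into the \emph{restriction} of $V\otimes L(\mu)$, not into $S(\mu)$; the passage to $S(\mu)$ requires the bimodule form of Frobenius reciprocity (Proposition \ref{4.2}), whose target is $\Hom_B(N,\Hom_{U(\mathfrak g)}(M(\lambda),M'))$. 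The paper's actual engine is quite different from yours: it never argues on $V\otimes L(\mu)$ directly, but on the Harish-Chandra bimodule $F(M(\lambda),V\otimes P(\mu))$, proving the key isomorphism $A\otimes_B\Hom(M(\lambda),V\otimes P(\mu))\iso F(M(\lambda),V\otimes P(\mu))$ (Lemma \ref{5.1}) by combining the unique-simple-quotient property of $\mathscr F(V\otimes P(\mu))$ with a $\mathfrak k$-multiplicity count resting on $B_\lambda\cong\Hom(M(\lambda),V\otimes V^*\otimes M(\lambda))$ (Lemma \ref{5.0}); simplicity and the separation of parameters then follow from Corollary \ref{3.6} and Proposition \ref{3.5}(iv). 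If you want to salvage your approach, you would need to replace the generation lemma by some such statement about $F(M(\lambda),\cdot)$ rather than about $\End(V\otimes L(\mu))$.
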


\textbf{Remark.}  The possible non-vanishing of $S(\mu)$ is discussed below and in Section 7.

\subsection{}\label{1.8}

The proof of the above theorem is basically an advanced exercise in Frobenius reciprocity. However there are some delicate points, particularly with respect to actions and because of this our analysis may seem to be a little over-pedantic.  In the Section $2$ we introduce the main characters and note (Proposition \ref {2.6}) a fairly standard equivalence of categories.  In Section $3$ we describe the Bernstein-Gelfand \cite {BG} equivalence of categories.  This had been developed to give a more algebraic proof of results of Zhelobenko (the latter having been carefully worked over by Duflo \cite {Du}).  Yet the Bernstein-Gelfand analysis has the special advantage of using projective covers of simple highest weight modules.  These are better behaved than Verma modules as they occur as direct summands, a fact particularly used in Section $5$. Moreover the magic of projective covers is used several times in the text, for example in the proof of Proposition \ref {3.5} and Lemma \ref {7.5}.

Thus in Sections $2,3$ we have given enough details of proofs so that the reader may easily reconstruct the full details alone. Some may find this useful, others superfluous. In any case similar techniques are used in Section 7.  For our own convenience we follow partly the treatment in \cite [8.4]{J2}, which shows, for what it is worth, that the analysis goes over to the quantum case.  Section $4$ establishes a key result relating induction to ``restriction".  The main new ideas are contained in Section $5$ giving a proof of the main theorem.  In Section $6$ we give a necessary condition for $S(\mu)$ to be non-zero and more generally calculate its dimension in terms of values of Kazhdan-Lusztig polynomials.
As a bonus we also compute the projective cover $Q(\mu)$ of the simple $B_\lambda$ module $S(\mu)$ and show that $B_\lambda$ is a direct sum of the $Q(\mu)$ with multiplicity $\dim S(\mu)$.

In Section 7, we summarize our results through a further equivalence of categories.  This shows in particular that the Kazhdan-Lusztig polynomials can be expressed in terms of dimensions of extension groups involving modules for the relative Yangians $B_\lambda$ which we recall are finite dimensional algebras and can in principle be described explicitly in terms of generators and relations. One may remark that when $\lambda$ is in general position, $B_\lambda$ is a finite direct sum of full matrix algebras (Lemma \ref {6.6}).

\

\textbf{Acknowledgements.}  I am grateful to Maxim Nazarov for bringing my attention to this problem when he visited the Weizmann Institute in March 2010 and in March 2012 and for explaining to me the results in \cite {KN}. I would also like to thank Jacques Alev for the invitation to speak in Reims during the first week of December 2012, in which these results were presented.  The gift from the author of \cite {M} saved me much trouble.  Finally I would like to thank L. W. Small for references \cite {ARS}, \cite {V}, J.T. Stafford for the proof of the lemma in Remark {6.5} and I. Reiten for her comments on Remark \ref {6.5}.

\section{The Bernstein-Gelfand-Gelfand and Harish-Chandra Categories}

\subsection{}\label{2.1}

Let $M$ be a $U(\mathfrak g)$ module.  We say that $M$ is a weight module if $U(\mathfrak h)$ acts locally finitely and reductively.  Then $M$ is a direct sum of its $\mathfrak h$ weight subspaces $M_\lambda: \lambda \in \mathfrak h^*$.  We say that $M$ is admissible if these subspaces are finite dimensional.

Let $\mathfrak b$ be the Borel subalgebra of $\mathfrak g$ defined by the choice of $\mathfrak h$ and of $\pi$.

The Bernstein-Gelfand-Gelfand $\mathscr O$ category introduced in \cite {BGG} consists of all finitely generated $U(\mathfrak g)$ admissible weight modules on which $U(\mathfrak b)$ acts locally finitely.  It is an abelian category in which all modules have finite length.  The $\mathscr O$ category is stable under tensoring by a finite dimensional $U(\mathfrak g)$ module $V'$.  Moreover the functor $V'\otimes \cdot: M \mapsto V'\otimes M$ on $\mathscr O$ is covariant and exact, with adjoint functor $V'^*\otimes \cdot$. From this one easily checks that it takes projectives to projectives (as is well-known).

Take $\lambda \in \mathfrak h^*$ and let $\mathbb C_\lambda$ be the one-dimensional $U(\mathfrak b)$ module of weight $\lambda$.  Recall that $M(\lambda):=U(\mathfrak g)\otimes _{U(\mathfrak b)}\mathbb C_\lambda$ is the Verma module of highest weight $\lambda$.   It belongs to the $\mathscr O$ category and is projective in $\mathscr O$ if and only if $\lambda$ is dominant.  In this latter case it is the projective cover of $L(\lambda)$.  In general the projective cover $P(\mu)$ of $L(\mu)$ can be obtained by taking a suitable direct summand of the tensor product of some finite dimensional $U(\mathfrak g)$ module with $M(\lambda)$, for $\lambda \in \mu +P$ dominant and regular.  This last fact is noted in \cite {BG} and extended significantly there.

\

\textbf{Remarks.}   Take $M \in \mathscr O$.  Let $\mathfrak g = \mathfrak n^- \oplus \mathfrak h \oplus \mathfrak n^+$, be the triangular decomposition of $\mathfrak g$ associated to the choices made in \ref {1.5}.  Then both $H^0(\mathfrak n^+,M)$ and $H_0(\mathfrak n^-,M)$ are admissible weight modules.   Moreover they are isomorphic if $M$ admits a non-degenerate contravariant form (via transport of structure).  In particular this is the case if $M=V\otimes L(\mu)$.  Again $\Hom_{U(\mathfrak g)}(M(\lambda),M)$ is canonically isomorphic to $H^0(\mathfrak n^+,M)_\lambda$.  Thus $S(\mu)$ as defined in \ref {1.7} is isomorphic to $H_0(\mathfrak n^-,V\otimes L(\mu))_\lambda$.  This allows one to compare Theorem \ref {1.7} with the results of Khoroshkin and Nazarov (\cite {KN}, and references therein).  They have obtained  (i) and (ii) for $\mathfrak g$ classical; but their proofs are far longer, needing to appeal to the explicit description of Yangians as well as results of Drinfeld.  They also obtain (iii) in general; but their proofs are again more complicated using in particular a surjectivity result for the generalized Harish-Chandra map \cite {KNV} which we do not need.  Since we need nothing from the theory of Yangians we shall drop all further reference to them treating the present problem ``for its own sake".

\subsection{}\label{2.2}

Let $H$ be a $U(\mathfrak g)$ bimodule, equivalently a $U(\mathfrak g \times \mathfrak g)$ module.  We say that $H$ is a $(\mathfrak g \times \mathfrak g,\textbf{K})$ module if $U(\mathfrak k)$ acts locally finitely (and hence reductively). (Note that the ``maximal compact" subgroup, viewed as a real form of $\textbf{K}$, of the ``complex group" $\textbf{G}$ is connected, so we are just following the long-established practice of ``Unitarians".) As is well-known such a module $H$ is a direct sum of its $\mathfrak k$ isotypical components.  We say that $H$ is admissible if these occur with finite multiplicity.  The Harish-Chandra category $\mathscr H$ consists of all admissible $(\mathfrak g \times \mathfrak g,\textbf{K})$ modules for which both the left and right action of $Z(\mathfrak g)$ is finite.  It is an abelian category stable under tensoring with finite dimensional modules on the left and on the right.

Take $\lambda \in \mathfrak h^*$ dominant.  The subcategory $\mathscr H_\lambda$ of $\mathscr H$ consists of all modules whose right annihilator contains $\ann_{Z(\mathfrak g)}M(\lambda)$. It is an abelian category stable under tensoring with finite dimensional modules on the left, which is an exact covariant functor on $\mathscr H_\lambda$ taking projectives to projectives.

Let $M,N$ be left $U(\mathfrak g)$ modules.  Then $\Hom(M,N)$ is a $U(\mathfrak k)$ module and we set $F(M,N):=\{a \in \Hom(M,N)| \dim U(\mathfrak k)a < \infty\}$, which is again a $U(\mathfrak g)$ bimodule and hence a $(\mathfrak g \times \mathfrak g,\textbf{K})$ module.

Suppose further that $M,N$ belong to $\mathscr O$.  Then $F(M,N)$ is an admissible $(\mathfrak g \times \mathfrak g,\textbf{K})$ module, as is well-known.  (Indeed since objects in $\mathscr O$ have finite length, one can assume $M,N$ simple, so of highest weight and in this case multiplicity estimates are easy to make.)  Moreover the action of $Z(\mathfrak g)$ on $M$ and on $N$ is finite and so $F(M,N)$ belongs to $\mathscr H$.  Finally if we take $M=M(\lambda)$ with $\lambda$ dominant, then
this module further belongs to $\mathscr H_\lambda$.

Notice that the manner we have defined the $U(\mathfrak g)$ bimodule structure of $A$, gives $A_\lambda = \End V \otimes U_\lambda$, the structure of a $A^\ltimes-U_\lambda$ bimodule.

\subsection{}\label{2.3}

The following result is standard but we shall give a proof for completeness.  Adopt the conventions of \ref {1.3}.

\begin {lemma}

\

(i)  Every $A$ module takes the form $N=V \otimes M$, for some $U(\mathfrak g)$ module $M$.

 \

 (ii) $M$ is simple $\Leftrightarrow$ $N$ is simple.

\end {lemma}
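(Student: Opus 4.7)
The plan rests on the fact that $\End V$ is a simple finite-dimensional algebra whose unique simple module (up to isomorphism) is $V$, so every $\End V$-module is isomorphic to a direct sum of copies of $V$. This is the Morita equivalence between $\End V$ and $\mathbb C$; tensoring the whole picture with $U(\mathfrak g)$ turns it into the desired correspondence between $U(\mathfrak g)$-modules and $A$-modules. I would make this concrete through the multiplicity-space construction.

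For (i), let $N$ be a left $A$-module. Via the embedding $\End V \hookrightarrow A$, $e \mapsto e \otimes 1$, view $N$ as an $\End V$-module, and set $M := \Hom_{\End V}(V,N)$, a priori just a $\mathbb C$-vector space. The evaluation map
$$
\phi : V \otimes M \longrightarrow N, \qquad v \otimes f \longmapsto f(v),
$$
is an isomorphism of $\End V$-modules; this is the standard consequence of semisimplicity of $\End V$, which I would check by decomposing $N$ into isotypic $V$-components. Since the two tensor factors of $A=\End V\otimes U(\mathfrak g)$ commute, the action of $1\otimes U(\mathfrak g)$ on $N$ lies in the commutant of $\End V\otimes 1$. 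Under $\phi$ that commutant is transported to the commutant of $\End V\otimes 1$ in $\End_{\mathbb C}(V\otimes M)=\End V\otimes \End_{\mathbb C}(M)$, which equals $1\otimes \End_{\mathbb C}(M)$ because the commutant of $\End V$ in itself is $\mathbb C$. Consequently the $U(\mathfrak g)$-action on $N$ transports to a well-defined $U(\mathfrak g)$-action on $M$, and $\phi$ becomes an isomorphism of $A$-modules.

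For (ii), I claim the rule $M'\mapsto V\otimes M'$ is a bijection between $U(\mathfrak g)$-submodules of $M$ and $A$-submodules of $N=V\otimes M$. Given an $A$-submodule $N'\subset N$, it is in particular $\End V$-stable, hence by semisimplicity of the $\End V$-action is of the form $V\otimes M'$ with $M' := \Hom_{\End V}(V,N')\subset M$; and $M'$ is automatically $U(\mathfrak g)$-stable because $N'$ is. Conversely $V\otimes M'$ is $A$-stable whenever $M'$ is $U(\mathfrak g)$-stable. This bijection preserves proper inclusions, so $M$ is simple if and only if $N$ is simple.

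No serious obstacle is expected: the one slightly delicate point is the computation of the commutant of $\End V\otimes 1$ inside $\End_{\mathbb C}(V\otimes M)$, and the observation that an $\End V$-submodule of $V\otimes M$ must have the form $V\otimes M'$. Both are routine consequences of the fact that $V$ is the unique simple $\End V$-module and that $\End V$ is its own double commutant in $\End V$.
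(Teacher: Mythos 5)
Your proposal is correct and follows essentially the same route as the paper: both define $M$ as the multiplicity space $\Hom_{\End V}(V,N)$ and show the evaluation map $V\otimes M\to N$ is an $A$-module isomorphism. The only differences are cosmetic — the paper verifies bijectivity of evaluation and part (ii) via the Jacobson density theorem, while you invoke semisimplicity of $\End V$, a commutant computation (valid since $V$ is finite dimensional), and the submodule-lattice correspondence, which is a slightly cleaner way to get (ii).
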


\begin {proof}  Take $n \in N$ non-zero. Since $(\End V)n$ is finite dimensional, it admits a simple submodule which is necessarily isomorphic to $V$ and which we again denote by $V$.  Set $M=\Hom_{\End V}(V,N)$.  It has the structure of a $U(\mathfrak g)$ module, via the action of $U(\mathfrak g)$ on $N$ which commutes with the action of $\End V$.  Frobenius reciprocity is the isomorphism
$$T: \Hom_A(V \otimes M,N) \iso \Hom_{U(\mathfrak g)}(M,\Hom_{\End V}(V,N)),\eqno {(*)}$$
given by $T_\psi(m)(v):=\psi(v\otimes m)$, for all $m \in M, v \in V, \psi \in  \Hom_{\End V \otimes U(\mathfrak g)}(V \otimes M,N)$, with inverse given by $S_\theta(v\otimes m)= \theta(m)(v)$, for all $\theta \in \Hom_{U(\mathfrak g)}(M,\Hom_{\End V}(V,N))$.

Let us show that $S_{\Id M}$ is an isomorphism of $V \otimes M$ onto $N$. Take $n \in N$.  Then $(\End V)n$ is a finite direct sum of copies of $V$.  Each direct summand may be expressed as some $m(V): m \in M$.  Thus we may write $n \in (\End V)n$ in the form of a finite sum $\sum m_i(v_i): m_i \in M, v_i \in V$, consequently $S_{\Id M}$ is surjective.  Suppose $\ker S_{\Id M} \neq 0$.  By the Jacobson density theorem \cite [Thm. 2.1.2]{H} (or by direct matrix computation) there exists a non-zero element of $\ker S_{\Id M}$ in the form $v\otimes m$.  Then via the action of $\End V$, we conclude that $V \otimes m\subset \ker S_{\Id M}$, which translates to the contradiction $m(V)=0$.  This proves (i).

The simplicity of $M$ implies via the Jacobson density theorem the simplicity of $V\otimes M$, whilst the converse is obvious. Hence (ii).

\end {proof}

\subsection{}\label{2.4}

Retain the conventions of \ref {1.3}.

\begin {cor}

\

(i)  Every $A^\ltimes$ module takes the form $N=V \otimes M$, for some $U(\mathfrak g)$ module $M$.

 \

 (ii) $M$ is simple $\Leftrightarrow$ $N$ is simple.

 \

 With respect to the diagonal action of $U(\mathfrak g)$ on $V \otimes M$,

 \

 (iii) $M \in \mathscr O  \Leftrightarrow N \in \mathscr O$.

 \

 If in addition $M$ is a right $U(\mathfrak g)$ module, then

 \

 (iv)  $M \in \mathscr H_\lambda  \Leftrightarrow N \in \mathscr H_\lambda$.

\end {cor}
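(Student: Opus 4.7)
The plan is to reduce everything to the already-established Lemma \ref{2.3} via the isomorphism $\zeta$ from \ref{1.2} and the commutative square of \ref{1.3}, and then to deduce the categorical statements (iii),(iv) from the fact that $\mathbb{C}$ is a direct summand of $V^*\otimes V$.

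For (i) and (ii), I would argue as follows. The map $\zeta:A^\ltimes\iso A$ is an algebra isomorphism, so restriction along $\zeta$ gives an equivalence between $A^\ltimes$-modules and $A$-modules. If $N$ is an $A^\ltimes$-module, then by Lemma \ref{2.3}(i) the underlying $A$-module (obtained by transport along $\zeta^{-1}$) has the form $V\otimes M$ with componentwise $A$-action $\phi$, for some $U(\mathfrak g)$-module $M$. The commutative diagram of \ref{1.3} tells us exactly that this $A$-action, pulled back through $\zeta$, is the action $\phi^\ltimes$. Hence $N\cong V\otimes M$ as an $A^\ltimes$-module. Simplicity is preserved by equivalence of categories, so (ii) is immediate from Lemma \ref{2.3}(ii).

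For (iii) and (iv), I would exploit the fact that, since $\mathfrak g$ is semisimple and $V$ is finite-dimensional, $V\otimes V^*\cong \End V$ decomposes as a $U(\mathfrak g)$-module under the adjoint action, with the trivial module $\mathbb C\cdot\Id_V$ appearing as a direct summand; dually $\mathbb C$ is a direct summand of $V^*\otimes V$. The forward implication ($M\in\mathscr O\Rightarrow N\in\mathscr O$, resp.\ $M\in\mathscr H_\lambda\Rightarrow N\in\mathscr H_\lambda$) follows because each of $\mathscr O$ and $\mathscr H_\lambda$ is stable under tensoring with a finite-dimensional $U(\mathfrak g)$-module on the left, as recorded in \ref{2.1} and \ref{2.2}. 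For the converse, if $N=V\otimes M$ lies in the category, then so does $V^*\otimes N=V^*\otimes V\otimes M$ (again by stability under tensoring with $V^*$), and $M$ is a direct summand of $V^*\otimes V\otimes M$ via the splitting $\mathbb C\hookrightarrow V^*\otimes V$. Since $\mathscr O$ and $\mathscr H_\lambda$ are abelian and closed under direct summands, we recover $M$ in the corresponding category. In (iv) one must additionally check that this splitting is compatible with the right $U(\mathfrak g)$-action, but since $V^*\otimes V$ is a trivial right $\mathfrak g$-module (the right action is only on the $M$-factor) the splitting is automatically a morphism of bimodules.

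The only delicate point is bookkeeping for (iv): one must verify that tensoring with $V$ (or $V^*$) on the left preserves the bimodule structure relevant to $\mathscr H_\lambda$, in particular the right annihilator condition involving $\ann_{Z(\mathfrak g)}M(\lambda)$. This is immediate because the right $U(\mathfrak g)$-action on $V\otimes M$ is by construction only on the second factor, so the right annihilator of $V\otimes M$ coincides with that of $M$. No real obstacle arises; the proof is essentially a matter of organizing the Frobenius reciprocity of \ref{2.3} together with the $\zeta$-intertwining of \ref{1.3} and the semisimplicity of finite-dimensional $U(\mathfrak g)$-modules.
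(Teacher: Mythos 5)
Your proposal is correct and follows essentially the same route as the paper: parts (i) and (ii) are transported from Lemma \ref{2.3} via the isomorphism $\zeta$ and the commutative square of \ref{1.3}, and for (iii) and (iv) the converse direction is obtained by passing to $V^*\otimes N=V^*\otimes V\otimes M$ and extracting $M$ as a direct summand. Your extra bookkeeping for the right $U(\mathfrak g)$-action in (iv) is exactly the routine verification the paper leaves implicit with ``the proof of (iv) is similar.''
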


\begin {proof}
(i) and (ii)  are immediate from the above lemma using the commutative diagram in \ref {1.3}.  One may also give a direct proof in a similar fashion to the proof of the lemma based on the following observations.

 Recall that $N$ is now viewed as an $A^\ltimes$ module. One checks $(x.S)(v)=xS(v)-Sxv$, for all $S \in \Hom_{\End V}(V,N))$, defines an action of $U(\mathfrak g)$ on $\Hom_{\End V}(V,N)$. Then with the same formula for $T$ as in $(*)$ one further checks that

$$T: \Hom_{A^\ltimes}(V \otimes M,N) \iso \Hom_{U(\mathfrak g)}(M,\Hom_{\End V}(V,N)),\eqno {(**)}$$
but now with the action of $U(\mathfrak g)$ on $\Hom_{\End V}(V,N)$ defined above.

Consider (iii).  The assertion $\Rightarrow$ is clear.  If $N$ belongs to $\mathscr O$, then so does $V^*\otimes N=V^*\otimes V\otimes M$ in which $M$ lies as a direct summand. Hence (iii).  The proof of (iv) is similar.

\end {proof}

\subsection{}\label{2.5}

Recall \ref {2.1} and let $\mathscr O^V$ denote the category of $U(\mathfrak g)$ modules of the form $V\otimes M$ with $M \in \mathscr O$, given the diagonal action of $U(\mathfrak g)$. By the above corollary this is just the category of $A^\ltimes$ modules which as $U(\mathfrak g)$ modules belong to $\mathscr O$.  With respect to just $U(\mathfrak g)$ action, $\mathscr O^V$ is a subcategory $\mathscr O$.

Take $\lambda \in \mathfrak h^*$ dominant.  Recall \ref {2.2} and let $\mathscr H^V_\lambda$ denote the subcategory of $\mathscr H_\lambda$ of modules of the form $V\otimes H: H \in \mathscr H_\lambda$.  By the above corollary this is just the subcategory of $\mathscr H_\lambda$ modules which are also left $A^\ltimes$ modules.

\subsection{}\label{2.6}

The above results may be expressed more formally as follows.  As noted in \ref {2.4} we obtain a covariant functor $\mathscr T' = \Hom_{\End V}(V,\cdot)$ from the category of $A^\ltimes$ modules to the category of $U(\mathfrak g)$ modules and a covariant functor $\mathscr T = V\otimes \cdot$ from the category of $U(\mathfrak g)$ modules to the category of $A^\ltimes$ modules, which by $(**)$ is adjoint to $\mathscr T'$.  Moreover let $N$ be an $A^\ltimes$ module. Then the proof of Lemma \ref {2.3} shows that the identity map on $\mathscr T'N$ induces an isomorphism of $\mathscr T \mathscr T' N$ onto $N$.  Moreover by Corollary \ref {2.3}(ii) simple modules are translated to simple modules.

We conclude that $\mathscr T$ with inverse $\mathscr T'$ is an equivalence of categories.  More specifically in view of (iii) and (iv) of Corollary \ref {2.4} we obtain the following

\begin {prop} The functor $\mathscr T$ restricts to an equivalence of categories of $\mathscr O$ (resp. $\mathscr H_\lambda$) onto $\mathscr O^V$ (resp. $\mathscr H^V_\lambda$) with inverse functor $\mathscr T'$.
\end {prop}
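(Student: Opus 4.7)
The plan is to observe that everything needed for the proposition has essentially been assembled in the preceding subsections: it remains only to glue the pieces together. The adjunction $(**)$ exhibits $\mathscr T$ as left adjoint to $\mathscr T'$, so by standard category theory one must verify that the unit $\eta_M\colon M\to \mathscr T'\mathscr T M$ and the counit $\varepsilon_N\colon \mathscr T\mathscr T'N \to N$ of this adjunction are both natural isomorphisms, and then check that the functors restrict correctly to the stated subcategories.

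The counit $\varepsilon_N$ is precisely the map $S_{\Id M}$ of Lemma \ref{2.3}, applied now in the $A^\ltimes$ setting: this gives the isomorphism $\mathscr T\mathscr T'N \iso N$ already noted in the paragraph preceding the proposition. For the unit, $\eta_M$ sends $m\in M$ to the map $v\mapsto v\otimes m$, which is $\End V$-linear because, under $\phi^\ltimes$, the subalgebra $\End V \otimes 1 \subset A^\ltimes$ acts only on the first tensor factor of $V\otimes M$. Bijectivity of $\eta_M$ is immediate from a basis decomposition of $M$ together with $\Hom_{\End V}(V,V)=\mathbb C$: every $\End V$-linear map $V\to V\otimes M$ has the form $v\mapsto v\otimes m$ for a unique $m$. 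For $U(\mathfrak g)$-equivariance one invokes the action $(x.S)(v)=xS(v)-S(xv)$ of the proof of Corollary \ref{2.4} and checks that for a primitive $x\in \mathfrak g$,
$$(x.\eta_M(m))(v)=x_1v\otimes x_2m - xv\otimes m = v\otimes xm = \eta_M(xm)(v),$$
using $\Delta(x)=x\otimes 1 + 1\otimes x$; since $\mathfrak g$ generates $U(\mathfrak g)$ as an algebra this suffices.

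Once the adjunction is shown to be an equivalence on the full categories of $U(\mathfrak g)$-modules and of $A^\ltimes$-modules, the assertion for the subcategories follows immediately from Corollary \ref{2.4}(iii)--(iv): $\mathscr T$ carries $\mathscr O$ into $\mathscr O^V$ and $\mathscr H_\lambda$ into $\mathscr H^V_\lambda$, while $\mathscr T'$ carries them back, since $\mathscr O^V$ (resp. $\mathscr H^V_\lambda$) is by definition the image of $\mathscr O$ (resp. $\mathscr H_\lambda$) under $\mathscr T$ and Corollary \ref{2.4} says precisely that this image depends only on the underlying module. The only delicate point, though minor, is verifying $U(\mathfrak g)$-equivariance of the unit: one must keep careful track of the twisted action on $\Hom_{\End V}(V,N)$ defined in $(**)$, which differs from the undecorated action used in the ordinary-$A$ setting of Lemma \ref{2.3}. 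Everything else is a direct invocation of the earlier results.
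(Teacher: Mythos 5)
Your proof is correct and follows essentially the same route as the paper: the adjunction $(**)$, the counit isomorphism $\mathscr T\mathscr T'N\iso N$ from the proof of Lemma \ref{2.3}, and Corollary \ref{2.4}(iii)--(iv) for the restriction to the subcategories. You are somewhat more explicit than the paper in verifying that the unit $M\to\mathscr T'\mathscr T M$ is an isomorphism (the paper leaves this standard point implicit), which is a reasonable thing to spell out.
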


\section{The Bernstein-Gelfand Equivalence of Categories}

In this section we extend very slightly results of Bernstein and Gelfand \cite {BG}. This extension can be obtained by applying Proposition \ref {2.6} to the results in \cite {BG}. Most of the stated results will be used in the subsequent sections. Our proofs may be skipped by the reader who prefers to follow \cite {BG}.

\subsection{}\label{3.1}

Recall \ref {2.2} and let $M,N,V'$ be $U(\mathfrak g)$ modules.  One checks that the canonical injection
$$\Hom(\mathbb C,V') \otimes \Hom(M,N) \hookrightarrow \Hom(M,V'\otimes N),$$
is an isomorphism of $A^\ltimes-U(\mathfrak g)$ bimodules.

 In this we take $V'$ to be the fixed finite dimensional $U(\mathfrak g)$ module $V$ used \ref {1.2} in defining $A$.  Then one may check (cf \cite [3.2]{J0}) that the above map restricts to an isomorphism
$$V\otimes F(M,N) \iso F(M,V\otimes N), \eqno{(*)}$$
of $A^\ltimes-U(\mathfrak g)$ bimodules, in which we have identified $\Hom(\mathbb C,V)$ with $V$.  (For $(*)$ to hold it is enough that $V$ be a direct sum of finite dimensional $U(\mathfrak g)$ modules.)

Fix $\lambda \in \mathfrak h^*$ dominant.  Given $N \in \mathscr O^V$. Then (cf. \ref {2.2}) it follows easily that $F(M(\lambda),N) \in \mathscr H^V_\lambda$. Thus we obtain a covariant functor $\mathscr F:=F(M(\lambda),\cdot)$ taking $\mathscr O^V$ to $\mathscr H^V_\lambda$.  By $(*)$, it commutes with tensoring by finite dimensional $U(\mathfrak g)$ modules on the left.

Conversely take $H \in \mathscr H^V_\lambda$.  Then $H\otimes _{U(\mathfrak g)}M(\lambda)$ is a left $A^\ltimes$ module.  Since $H\in \mathscr H_\lambda$ one has $H\otimes _{U(\mathfrak g)}M(\lambda)\in \mathscr O$, as is well-known (see also below).  Thus $\mathscr F^\prime:= \cdot \otimes _{U(\mathfrak g)}M(\lambda)$ is a covariant functor taking $\mathscr H^V_\lambda$ to $\mathscr O^V$.  It clearly commutes with tensoring by finite dimensional $U(\mathfrak g)$ modules on the left.  Because it is defined by tensor product, the functor $\mathscr F'$ is right exact; but can fail to be exact \cite [5.9]{BG}.

\begin {lemma}

\

(i) $\mathscr F$ is an exact functor.

\

(ii)  For all $H \in \mathscr H^V_\lambda, N \in \mathscr O^V$ one has an isomorphism
$$Hom(H,\mathscr FN) \stackrel{\Theta}{\iso} Hom(\mathscr F^\prime H,N),$$
where $Hom$ means homomorphism in the category.

\end {lemma}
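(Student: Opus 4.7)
The plan is a direct verification of the tensor--Hom adjunction, enhanced to track the extra $A^\ltimes$ and bimodule structures, together with an isotypic-decomposition argument for exactness.

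For (i), I would first reduce from $\mathscr{O}^V$ to $\mathscr{O}$: the underlying $U(\mathfrak g)$-module of any object of $\mathscr{O}^V$ lies in $\mathscr{O}$, and a sequence in $\mathscr{O}^V$ is exact iff the underlying sequence is, so it suffices to prove exactness of $F(M(\lambda),\cdot)$ on $\mathscr{O}$.  Given $N\in\mathscr{O}$, decompose $F(M(\lambda),N)$ into $\mathfrak k$-isotypic components.  For each finite-dimensional simple $\mathfrak g$-module $E$, standard Hom-tensor adjunction identifies
$$\Hom_{\mathfrak g}\bigl(E,\Hom(M(\lambda),N)\bigr)\;\cong\;\Hom_{\mathfrak g}\bigl(E\otimes M(\lambda),N\bigr).$$
Since $\lambda$ is dominant, $M(\lambda)$ is projective in $\mathscr{O}$, and by \ref{2.1} tensoring a projective with a finite dimensional module gives again a projective; hence $\Hom_{\mathfrak g}(E\otimes M(\lambda),\cdot)$ is exact on $\mathscr{O}$.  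Each isotypic component therefore depends exactly on $N$, and exactness persists under the direct sum, yielding (i).

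For (ii), define $\Theta$ by $\Theta(\phi)(h\otimes m):=\phi(h)(m)$.  This is well-defined on $H\otimes_{U(\mathfrak g)}M(\lambda)$ because $\phi$ is right $U(\mathfrak g)$-linear: for the right action on $F(M(\lambda),N)$ given by precomposition, $\phi(hu)(m)=\phi(h)(um)$, so the map is $U(\mathfrak g)$-balanced.  Left $U(\mathfrak g)$-linearity of $\Theta(\phi)$ follows from that of $\phi$, and $\End V$-linearity from $\phi$ being $A^\ltimes$-linear (noting that $A^\ltimes$ acts on the tensor product through its action on the first factor, which commutes with the right $U(\mathfrak g)$-action used in the tensor).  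Thus $\Theta(\phi)\in\Hom(\mathscr F'H,N)$.

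Conversely, for $\Psi\in\Hom(\mathscr F'H,N)$ set $\Theta^{-1}(\Psi)(h)(m):=\Psi(h\otimes m)$.  The nontrivial verification--and the main obstacle--is that $\Theta^{-1}(\Psi)(h)$ is $\mathfrak k$-finite, i.e.\ lies in $F(M(\lambda),N)$.  For $x\in\mathfrak g$ and the diagonal $\mathfrak k$-action on $\Hom(M(\lambda),N)$, one computes
$$\bigl(x\cdot\Theta^{-1}(\Psi)(h)\bigr)(m)=x\Psi(h\otimes m)-\Psi(h\otimes xm)=\Psi(xh\otimes m)-\Psi(hx\otimes m)=\Psi\bigl((x\cdot h)\otimes m\bigr),$$
using left $\mathfrak g$-linearity of $\Psi$, balancedness $h\otimes xm=hx\otimes m$, and the definition $x\cdot h=xh-hx$ of the diagonal $\mathfrak k$-action on $H$.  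Hence $x\cdot\Theta^{-1}(\Psi)(h)=\Theta^{-1}(\Psi)(x\cdot h)$, so the $\mathfrak k$-orbit of $\Theta^{-1}(\Psi)(h)$ is the image under $\Theta^{-1}(\Psi)$ of $U(\mathfrak k)\cdot h$, which is finite-dimensional because $H\in\mathscr H^V_\lambda$.  Checking bimodule and $A^\ltimes$-linearity of $\Theta^{-1}(\Psi)$ and that $\Theta,\Theta^{-1}$ are mutually inverse are formal from the definitions, completing (ii).  The only subtleties throughout are bookkeeping of the left/right $U(\mathfrak g)$-actions and the compatibility between the $A^\ltimes$-structure on $H\otimes_{U(\mathfrak g)}M(\lambda)$ and the diagonal $U(\mathfrak g)$-action inherited from $\mathscr O^V$, which reduces (via Proposition \ref{2.6}) to the identification $(V\otimes H')\otimes_{U(\mathfrak g)}M(\lambda)\cong V\otimes(H'\otimes_{U(\mathfrak g)}M(\lambda))$ with the diagonal action.
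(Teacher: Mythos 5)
Your proposal is correct and follows essentially the same route as the paper: part (i) is the standard argument (isotypic decomposition under $\mathfrak k$ plus projectivity of $E\otimes M(\lambda)$ for $\lambda$ dominant) that the paper delegates to \cite[8.4.8]{J2}, and part (ii) is the same Frobenius reciprocity isomorphism $F_\psi(h\otimes m)=\psi(h)(m)$, with your $\mathfrak k$-equivariance computation being an expanded version of the paper's remark that local $\mathfrak k$-finiteness of $H$ lets one replace $\Hom(M(\lambda),N)$ by $F(M(\lambda),N)$. No gaps; you have simply written out details the paper leaves to the references.
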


\begin {proof}  When $V$ is the trivial $U(\mathfrak g)$ module this result is due to Bernstein and Gelfand \cite {BG}, see also \cite [Thm. 1.16]{GJ}.  The general case is practically immediate from this special case using Corollary \ref {2.4}.

In a little more detail:

(i) is just a consequence of $\lambda$ being dominant \cite [8.4.8]{J2}.

For (ii) observe that Frobenius reciprocity is the isomorphism of vector spaces
$$F:\Hom_{A^\ltimes-U(\mathfrak g)}(H,\Hom(M(\lambda),N)\iso \Hom_{A^\ltimes}(H\otimes_{U(\mathfrak g)}M(\lambda),N),$$
 given by $F_\psi(h\otimes m)=\psi(h)(m)$.  Finally since $H$ is locally finite under $\mathfrak k$ action we can replace $\Hom(M(\lambda),N)$ in the above by its $\mathfrak k$ locally finite part namely $F(M(\lambda),N)$.

\end {proof}

\subsection{}\label{3.2}

By \ref {3.1}(ii) we obtain
isomorphisms
$$ \Theta: Hom(H,\mathscr F \mathscr F'H) \iso Hom(\mathscr F^\prime H,\mathscr F'H), \quad \Theta':Hom(\mathscr FN,\mathscr FN) \iso Hom(\mathscr F^\prime \mathscr FN,N).$$

For all $H \in \mathscr H^V_\lambda$ such that $\mathscr F'H \neq 0$, let $\theta^-_H$  be the map $H \rightarrow \mathscr F \mathscr F'H$ obtained as the inverse image of the identity map on $\mathscr F'H$, that is $\theta^-_H:=\Theta^{-1}(\Id_{\mathscr F'H})$. One may check that $\theta^-_H(h)(m)=h\otimes m$, for all $h \in H, m \in M(\lambda)$, in particular it is a non-zero map.

For all $N\in \mathscr O^V$ such that $\mathscr FN \neq 0$, let $\theta^+_N$ be the map $\mathscr F' \mathscr FN \rightarrow N$ obtained as the image of the identity map on $\mathscr FN$, that is $\theta^+_N:=\Theta'(\Id_{\mathscr FN})$. One may check that $\theta^+_N(h\otimes m)=h(m), \forall h \in \mathscr FN, m \in M(\lambda)$. In particular it is a non-zero map.

\begin {lemma}  For all $H \in \mathscr H^V_\lambda$ finitely generated, $\theta^-_H$ is an isomorphism of $A^\ltimes - U(\mathfrak g)$ bimodules.

\end {lemma}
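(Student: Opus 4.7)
The plan is to deduce the result from the classical case $V=\mathbb{C}$, treated by Bernstein and Gelfand \cite{BG}, via the equivalence of categories in Proposition \ref{2.6}.

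First, I verify that $\theta^-$ is a natural transformation from the identity functor on $\mathscr{H}^V_\lambda$ to $\mathscr{F}\mathscr{F}'$: for any morphism $f\colon H\to H'$ in $\mathscr{H}^V_\lambda$, the equality $\mathscr{F}\mathscr{F}'(f)\circ\theta^-_H=\theta^-_{H'}\circ f$ follows immediately from the defining formula $\theta^-_H(h)(m)=h\otimes m$, since $\mathscr{F}'(f)(h\otimes m)=f(h)\otimes m$ and $\mathscr{F}(g)=g\circ(\cdot)$ post-composition.

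Second, I show that $\theta^-$ is compatible with tensoring on the left by any finite-dimensional $U(\mathfrak{g})$-module. For $H_0\in\mathscr{H}_\lambda$, the functor $\mathscr{F}'$ visibly commutes with left-tensoring, giving $\mathscr{F}'(V\otimes H_0)=V\otimes\mathscr{F}'H_0$. Combining this with the isomorphism $(*)$ of \ref{3.1} applied to $M=M(\lambda)$, $N=\mathscr{F}'H_0$, one obtains canonical identifications under which $\theta^-_{V\otimes H_0}$ corresponds to $\mathrm{Id}_V\otimes\theta^-_{H_0}$. In particular, $\theta^-_{V\otimes H_0}$ is an isomorphism of $A^\ltimes$--$U(\mathfrak{g})$ bimodules if and only if $\theta^-_{H_0}$ is.

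Third, I reduce to the case $V=\mathbb{C}$. By Proposition \ref{2.6} the functor $\mathscr{T}=V\otimes\cdot$ is an equivalence between $\mathscr{H}_\lambda$ and $\mathscr{H}^V_\lambda$ with inverse $\mathscr{T}'$; in particular it preserves finite generation. Thus any finitely generated $H\in\mathscr{H}^V_\lambda$ takes the form $H=V\otimes H_0$ for some finitely generated $H_0\in\mathscr{H}_\lambda$, and by the second step the claim for $H$ reduces to the claim for $H_0$. The latter is exactly the classical Bernstein--Gelfand theorem \cite{BG} (quoted as the $V=\mathbb{C}$ case of the present lemma), whose proof proceeds by presenting $H_0$ as a cokernel of a morphism between finite direct sums of modules of the form $V'\otimes U_\lambda$ (a generating family of projectives in $\mathscr{H}_\lambda$), applying the exactness of $\mathscr{F}$ (\ref{3.1}(i)) and right-exactness of $\mathscr{F}'$ together with the five lemma to reduce to $H_0=U_\lambda$, and then observing that in this case $\mathscr{F}'U_\lambda=M(\lambda)$ and $\theta^-_{U_\lambda}\colon U_\lambda\to F(M(\lambda),M(\lambda))$ is the canonical map induced by left multiplication, which is an isomorphism for dominant $\lambda$.

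The main obstacle is the bookkeeping in the second step: one must trace through the chain of canonical identifications to confirm that the formula $h\otimes m\mapsto(m\mapsto h\otimes m)$ intertwines the two formulations after left-tensoring by $V$. This is routine but the only place where the non-trivial action of $\mathfrak{k}$ on $V$ needs to be handled with care; once it is done, everything else reduces to the known $V=\mathbb{C}$ case.
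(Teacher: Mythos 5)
Your proposal is correct and follows essentially the same route as the paper: the base case rests on the Kostant--Duflo fact that $\theta^-_{U_\lambda}\colon U_\lambda\to F(M(\lambda),M(\lambda))$ is an isomorphism for dominant $\lambda$, and the general finitely generated case is handled by presenting the module via quotients of $V'\otimes U_\lambda$, using exactness of $\mathscr F$, right exactness of $\mathscr F'$ and commutation with left tensoring. Your explicit reduction to the $V=\mathbb C$ case through Proposition \ref{2.6} is only a cosmetic repackaging of the paper's argument, which instead takes $H=V\otimes U_\lambda$ as the base case and runs the same standard computation directly in $\mathscr H^V_\lambda$.
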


\begin {proof}

When $V$ is the trivial $U(\mathfrak g)$ module this result is due to Bernstein and Gelfand \cite {BG}. It uses the crucial fact originating in work of Kostant and Duflo (see for example \cite [8.2.4, 8.4.3]{D}) that $\mathscr FM(\lambda)=F(M(\lambda),M(\lambda))$ coincides with its subalgebra $U_\lambda=U(\mathfrak g)/\ann _{U(\mathfrak g)}M(\lambda)$.   A second crucial fact is that a finitely generated module $H \in \mathscr H_\lambda$ can be expressed as a quotient of some $V'\otimes U_\lambda$, for some finite dimensional $U(\mathfrak g)$ module $V'$.  This follows rather easily from the definition of $\mathscr H_\lambda$.

The proof of the general case is similar.  In a little more detail,
take $H=F(M(\lambda),V\otimes M(\lambda))=V\otimes U_\lambda$.  By the second equality $\mathscr F'H=V\otimes M(\lambda)$, whilst $\mathscr F (V\otimes M(\lambda))=H$.  Consequently $\theta^-_H$ is an isomorphism of $A^\ltimes - U(\mathfrak g)$ bimodules when $H=V\otimes U_\lambda$.

On the other hand $\mathscr F$ and $\mathscr F'$ commute with tensoring by finite dimensional $U(\mathfrak g)$ on the left.  It follows by a standard computation, say as in \cite [8.4.4]{J2}, that $\theta^-_H$ is an $A^\ltimes - U(\mathfrak g)$ bimodule isomorphism for all $H \in \mathscr H^V_\lambda$.

\end {proof}

\subsection{}\label{3.3}

\begin {lemma}  Take $L \in \mathscr O^V$ simple.  Then either $\mathscr FL=0$ or $\mathscr FL$ is simple.  Moreover every simple object in $\mathscr H^V_\lambda$ is so obtained.
\end {lemma}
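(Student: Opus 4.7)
The plan is to play the adjunction $\Theta$ of Lemma \ref{3.1}(ii) off against the isomorphism $\theta^-_H : H \iso \mathscr F \mathscr F' H$ of Lemma \ref{3.2} and the exactness of $\mathscr F$ from Lemma \ref{3.1}(i). The key identity I will exploit is the triangle identity of the adjunction: for any morphism $f : H \to \mathscr F N$ one has $\mathscr F \Theta(f) \circ \theta^-_H = f$.

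First I would prove simplicity of $\mathscr F L$ when it is nonzero. Let $L \in \mathscr O^V$ be simple with $\mathscr F L \neq 0$, and let $0 \neq H \hookrightarrow \mathscr F L$ be an arbitrary subobject in $\mathscr H^V_\lambda$. Under $\Theta$, the inclusion corresponds to a nonzero morphism $\varphi : \mathscr F' H \to L$; since $L$ is simple, $\varphi$ is surjective. Because $\mathscr F$ is exact, $\mathscr F \varphi : \mathscr F \mathscr F' H \twoheadrightarrow \mathscr F L$ is also surjective. The triangle identity now forces $\mathscr F \varphi \circ \theta^-_H$ to equal the original inclusion $H \hookrightarrow \mathscr F L$, which is injective; since $\theta^-_H$ is an isomorphism (Lemma \ref{3.2}, applicable because $H$ sits inside the finite-length module $\mathscr F L \simeq V \otimes F(M(\lambda),M)$ of \ref{3.1}(*), and is therefore finitely generated), $\mathscr F \varphi$ must be injective as well. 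Hence $H = \mathscr F L$, so $\mathscr F L$ is simple.

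For the essential surjectivity statement, I would start with any simple $H \in \mathscr H^V_\lambda$. Simple objects are finitely generated, so Lemma \ref{3.2} gives $H \simeq \mathscr F \mathscr F' H$; in particular $\mathscr F' H \neq 0$. Since the equivalence of Proposition \ref{2.6} shows $\mathscr O^V$ inherits finite length from $\mathscr O$, the module $\mathscr F' H \in \mathscr O^V$ admits a simple quotient $L$, and adjunction gives $\Hom(H, \mathscr F L) \cong \Hom(\mathscr F' H, L) \neq 0$. In particular $\mathscr F L \neq 0$, so by the first part $\mathscr F L$ is simple. Applying the exact functor $\mathscr F$ to the surjection $\mathscr F' H \twoheadrightarrow L$ and precomposing with $\theta^-_H$ furnishes a nonzero morphism between the simple objects $H$ and $\mathscr F L$, which is therefore an isomorphism.

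The main point to be careful about is the hypothesis of finite generation needed to invoke Lemma \ref{3.2}; both times this is immediate once one observes that $\mathscr F L$ is of finite length (via (*) of \ref{3.1} and the standard finite length of $F(M(\lambda), L(\mu))$ in $\mathscr H_\lambda$), and that a simple Harish-Chandra bimodule is automatically finitely generated. Apart from that, the argument is a bookkeeping exercise tying together the adjunction with the counit-type map $\theta^-_H$.
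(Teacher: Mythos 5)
Your proof of the first claim is the paper's own argument: apply $\Theta$ to a nonzero subobject $H\hookrightarrow \mathscr FL$, use simplicity of $L$ to get surjectivity of $\Theta(\iota)$, then exactness of $\mathscr F$ together with the isomorphism $\theta^-_H$ and the adjunction (triangle) identity to force $\iota$ to be surjective. You are in fact slightly more careful than the paper in justifying that Lemma \ref{3.2} applies to the subobject $H$ (finite generation via finite length of $\mathscr FL$, obtained from \ref{3.1}$(*)$ and the classical finite-length property of $F(M(\lambda),L(\mu))$ — note one cannot invoke \ref{3.4} here, as that is deduced \emph{from} this lemma); this is a legitimate and welcome point. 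For the second claim ("every simple object in $\mathscr H^V_\lambda$ is so obtained") the paper does not give a self-contained argument: it cites the Zhelobenko--Duflo result for trivial $V$ and the reduction via Corollary \ref{2.4}. You instead give a direct proof with the same machinery: $H\iso\mathscr F\mathscr F'H$ forces $\mathscr F'H\neq 0$, a simple quotient $L$ of the finite-length object $\mathscr F'H$ gives $\Hom(\mathscr F'H,L)\neq 0$, hence $\Hom(H,\mathscr FL)\neq 0$, and a nonzero map between simples is an isomorphism. This buys a fully self-contained treatment at essentially no extra cost, and is correct.
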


\begin {proof} When $V$ is the trivial $U(\mathfrak g)$ module this result is due to Bernstein and Gelfand \cite {BG}.   The last part is an older result of Zhelobenko for which a careful proof was given by Duflo \cite [Section I, 4.5]{Du}, using \cite [1.12]{GJ} to relate the presentation in this last reference with that given here.

The general case then follows easily from Corollary \ref {2.4}.  Alternatively one may proceed as follows.

Suppose $\mathscr F L \neq 0$ and let $\iota:H\hookrightarrow \mathscr F L$ be a proper embedding.  Then $\Theta(\iota):\mathscr F'H \rightarrow L$ is non-zero, hence surjective.  By exactness of $\mathscr F$ and Lemma \ref {3.2}, the composed map $H \stackrel{\theta^-_H}{\iso}\mathscr F \mathscr F' H\stackrel{\mathscr F(\Theta(\iota))}{\twoheadrightarrow}\mathscr F L$, is surjective.  One checks that it is given by $h \mapsto (m \mapsto \iota(h)m))$, which is just the original embedding $\iota$. This gives the required contradiction.

\end {proof}

\subsection{}\label{3.4}

A consequence of Lemma \ref {3.3} is that every module in $\mathscr H^V_\lambda$ has finite length.  This is an immediate consequence of the result for $V$ being the trivial module and also has the same proof (see \cite [8.4.7]{J2} for example).

Let $\mathscr O^V_\Lambda$ be the subcategory of $\mathscr O^V$ modules with weights in $\Lambda=\lambda+P$.  It is clear that if $H \in \mathscr H^V_\lambda$, then $\mathscr F'H \in \mathscr O^V_\Lambda$.

We thus obtain the following improvement to Lemma \ref {3.2}.

\begin {cor}  For all $H \in \mathscr H^V_\lambda$, $\theta^-_H$ is an isomorphism of $A^\ltimes - U(\mathfrak g)$ bimodules.  In particular given $H \in \mathscr H^V_\lambda$, there exists $M \in  \mathscr O^V_\Lambda$ such that $H=F(M(\lambda),M)$.

\end {cor}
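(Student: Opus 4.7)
The plan is to view Corollary \ref{3.4} as a packaging of Lemma \ref{3.2} together with the finite-length statement made in the paragraph immediately preceding the corollary. The only thing missing from Lemma \ref{3.2} is the hypothesis ``finitely generated''; I would remove it by showing that every $H \in \mathscr H^V_\lambda$ is automatically of finite length, hence finitely generated.

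First I would recall that, by Lemma \ref{3.3}, every simple object of $\mathscr H^V_\lambda$ is of the form $\mathscr F L$ for some simple $L \in \mathscr O^V$. Since objects of $\mathscr O^V$ have finite length (this follows from the corresponding fact in $\mathscr O$ via the equivalence of Proposition \ref{2.6}), the standard Jordan--H\"older argument (as in \cite[8.4.7]{J2}) shows that every object of $\mathscr H^V_\lambda$ has finite length. In particular any $H \in \mathscr H^V_\lambda$ is finitely generated, so Lemma \ref{3.2} applies and $\theta^-_H : H \liso \mathscr F \mathscr F' H$ is an $A^\ltimes - U(\mathfrak g)$ bimodule isomorphism. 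This proves the first assertion.

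For the second assertion I would simply set $M := \mathscr F' H = H \otimes_{U(\mathfrak g)} M(\lambda)$. By construction of the functor $\mathscr F'$ in \ref{3.1}, the module $M$ lies in $\mathscr O^V$, and the first part of the corollary then yields
\[
H \liso \mathscr F \mathscr F' H \;=\; F(M(\lambda), M)
\]
as $A^\ltimes - U(\mathfrak g)$ bimodules. To upgrade $M \in \mathscr O^V$ to $M \in \mathscr O^V_\Lambda$, I would note that the $\mathfrak h$-weights of $M(\lambda)$ lie in $\lambda - \mathbb N \pi \subset \Lambda$, while the weights of $H$ lie in $P$ (since $H$, being a $(\mathfrak g \times \mathfrak g, \mathbf K)$-module with $Z(\mathfrak g)$ acting finitely, is in particular a locally finite weight module for the diagonal $\mathfrak h$); thus the weights appearing in $H \otimes_{U(\mathfrak g)} M(\lambda)$ lie in $\lambda + P = \Lambda$, as required.

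The only point that needs any verification is the finite-length claim, and that is essentially already asserted in the paragraph preceding the corollary, with the proof quoted from the $V$ trivial case being formal and passing through without change (one can also deduce it directly from Proposition \ref{2.6}). Consequently there is no serious obstacle; the corollary is a bookkeeping consequence of results already established.
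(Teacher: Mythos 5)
Your proposal is correct and follows the paper's own route: the paper likewise derives the corollary from Lemma \ref{3.2} by observing (via Lemma \ref{3.3} and the argument of \cite[8.4.7]{J2}) that every object of $\mathscr H^V_\lambda$ has finite length, hence is finitely generated, and then takes $M=\mathscr F'H$, noting that it clearly lies in $\mathscr O^V_\Lambda$. Your extra weight computation just spells out what the paper declares ``clear''.
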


\textbf{Remark.} Notice this also means that the multiplicity of a simple $\mathfrak k$ module in a Harish-Chandra module $H \in \mathscr H_\lambda$ (so in $\mathscr H^V_\lambda$) is finite.  Of course this is well-known.

\subsection{}\label{3.4.1}

Recall that we fixed a dominant element $\lambda \in \mathfrak h^*$.

Take $L \in \mathscr O$ simple and let $P$ denote its projective cover in $\mathscr O$.  The following is elementary but we indicate the proof anyway as this observation is rather crucial in Section 5.

\begin {lemma} The following two assertions are equivalent.

\

(i) $\mathscr FL \neq 0$,

\

(ii) There exists a simple finite dimensional $U(\mathfrak g)$ module $S$ such that $P$ is a direct summand of $S^*\otimes M(\lambda)$.

\end {lemma}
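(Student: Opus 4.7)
The plan is to detect nonvanishing of $\mathscr FL=F(M(\lambda),L)$ through its decomposition into $\mathfrak k$-isotypic components, identify each multiplicity with a space of $\mathfrak g$-homomorphisms via the bimodule isomorphism $(*)$ of \ref{3.1}, and then transfer this via tensor--hom adjunction to a statement about direct summands of $S^{*}\otimes M(\lambda)$. The three ingredients needed are $(*)$, reductivity of $\mathfrak k$ (so locally finite $\mathfrak k$-modules split into isotypic pieces), and projectivity of $M(\lambda)$ for $\lambda$ dominant combined with Krull--Schmidt in $\mathscr O$.

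For the first reduction, observe that $F(M(\lambda),L)$ is locally finite under $\mathfrak k$ by definition, so it vanishes iff every $\mathfrak k$-isotypic component vanishes. Applying $(*)$ of \ref{3.1} with an arbitrary simple finite-dimensional $V'$ in place of $V$, one has $V'\otimes F(M(\lambda),L)\cong F(M(\lambda),V'\otimes L)$ as $A^{\ltimes}$--$U(\mathfrak g)$ bimodules and in particular as $\mathfrak k$-modules. Taking $\mathfrak k$-invariants on the right yields $\Hom_{\mathfrak g}(M(\lambda),V'\otimes L)$, while on the left it yields $\Hom_{\mathfrak k}(V'^{*},F(M(\lambda),L))$, i.e.\ the multiplicity space of $V'^{*}$ inside $F(M(\lambda),L)$. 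Hence
$$
\mathscr FL\neq 0\;\Longleftrightarrow\;\exists\text{ simple finite-dim.\ }V'\text{ with }\Hom_{\mathfrak g}(M(\lambda),V'\otimes L)\neq 0.
$$

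For the second reduction, the standard tensor--hom adjunction gives $\Hom_{\mathfrak g}(M(\lambda),V'\otimes L)\cong\Hom_{\mathfrak g}(V'^{*}\otimes M(\lambda),L)$; setting $S:=V'$ this is $\Hom_{\mathfrak g}(S^{*}\otimes M(\lambda),L)$. Since $M(\lambda)$ is projective in $\mathscr O$ (as $\lambda$ is dominant) and tensoring by the finite-dimensional $S^{*}$ preserves projectivity (recalled in \ref{2.1}), $S^{*}\otimes M(\lambda)$ is projective. By Krull--Schmidt it decomposes into indecomposable projectives $P(\mu)$, and since $\Hom_{\mathfrak g}(P(\mu),L(\nu))=\mathbb C\delta_{\mu\nu}$, the space $\Hom_{\mathfrak g}(S^{*}\otimes M(\lambda),L)$ is nonzero exactly when the projective cover $P$ of $L$ occurs as a direct summand of $S^{*}\otimes M(\lambda)$. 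Combining the two reductions yields the desired equivalence (i)$\Leftrightarrow$(ii).

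The only delicate point is checking that under $(*)$ the $\mathfrak k$-action on the external tensor factor $V'$ matches the diagonal $\mathfrak k$-action used to read off invariants on the right-hand side; this is essentially the compatibility already built into the statement of $(*)$ in \ref{3.1} and presents no real obstacle, the remaining steps being standard manipulations in category $\mathscr O$.
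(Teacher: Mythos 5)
Your argument is correct and follows essentially the same route as the paper: both directions come down to Frobenius reciprocity identifying a simple $\mathfrak k$-type $S^{*}$ of $F(M(\lambda),L)$ with a nonzero (hence surjective) map $S^{*}\otimes M(\lambda)\rightarrow L$, and then the projectivity of $S^{*}\otimes M(\lambda)$ together with Krull--Schmidt and $\dim\Hom(P(\mu),L(\nu))=\delta_{\mu\nu}$ to detect $P$ as a direct summand. Your packaging via the isomorphism $(*)$ of \ref{3.1} and isotypic multiplicity spaces is only a cosmetic variant of the paper's choice of a simple $\mathfrak k$-submodule of $\mathscr FL$.
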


\begin {proof} Assume (ii) holds.   Then we have $U(\mathfrak g)$ module surjections $S^* \otimes M(\lambda)\twoheadrightarrow P {\twoheadrightarrow} L$. Applying Frobenius reciprocity gives $\Hom_{U(\mathfrak g)}(S^*,\Hom(M(\lambda),L))\neq 0$. A fortiori (i) holds. Conversely suppose $\mathscr FL \neq 0$ and let $S^*$ be a simple submodule of $\mathscr FL=F(M(\lambda),L)$.  Then by Frobenius reciprocity there is a surjective map $S^* \otimes M(\lambda)\twoheadrightarrow L$.  Yet $S^*\otimes M(\lambda)$ is projective hence a direct sum of projective indecomposables in which $P$ being the projective cover of $L$ must appear.
\end {proof}

\subsection{}\label{3.4.2}

In the conventions and notation of \ref {3.4.1}, let $P$ be an indecomposable direct summand of $S^* \otimes M(\lambda)$.

\begin {lemma}  $\mathscr FP$ is projective in $\mathscr H_\lambda$.
\end {lemma}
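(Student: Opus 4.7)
The plan is to exhibit $\mathscr{F}P$ as a direct summand of an explicitly projective object in $\mathscr{H}_\lambda$, namely $S^* \otimes U_\lambda$, and then invoke the fact that summands of projectives are projective.

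First I would compute $\mathscr{F}(S^* \otimes M(\lambda))$. Applying the isomorphism $(*)$ of Lemma \ref{3.1} (valid with the role of $V$ played by any finite-dimensional $U(\mathfrak{g})$-module, in particular $S^*$) together with the Kostant--Duflo identification $F(M(\lambda), M(\lambda)) = U_\lambda$ already invoked in the proof of Lemma \ref{3.2}, one obtains a bimodule isomorphism $\mathscr{F}(S^* \otimes M(\lambda)) \cong S^* \otimes U_\lambda$, where the left action of $U(\mathfrak{g})$ on $S^* \otimes U_\lambda$ is diagonal while the right action operates only on the second factor. Since $\mathscr{F}$ is additive, the decomposition $S^* \otimes M(\lambda) = P \oplus P'$ exhibits $\mathscr{F}P$ as a direct summand of $S^* \otimes U_\lambda$.

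Next I would verify directly that $S^* \otimes U_\lambda$ is projective in $\mathscr{H}_\lambda$. For $H \in \mathscr{H}_\lambda$, any bimodule map $\phi : S^* \otimes U_\lambda \to H$ is determined by $f(\xi) := \phi(\xi \otimes 1) \in H$, because $\phi(\xi \otimes u) = f(\xi) \cdot u$ by right-linearity. (The automatic right annihilation of $H$ by $\ann_{U(\mathfrak{g})} M(\lambda)$, which in $\mathscr{H}_\lambda$ follows from centrality and the assumed right annihilation by $\ann_{Z(\mathfrak{g})} M(\lambda)$, ensures this formula factors through $U_\lambda$.) Imposing left-linearity for $x \in \mathfrak{g}$, using $\Delta(x) = x \otimes 1 + 1 \otimes x$, gives $x \cdot f(\xi) = f(x\xi) + f(\xi) \cdot x$, i.e.\ $[x, f(\xi)] = f(x\xi)$. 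Thus $f : S^* \to H$ is $\mathfrak{g}$-equivariant when $\mathfrak{g}$ acts on $H$ via the adjoint (i.e.\ $\mathfrak{k}$) action, and one obtains a natural identification
$$\Hom_{\mathscr{H}_\lambda}(S^* \otimes U_\lambda, H) \cong \Hom_\mathfrak{g}(S^*, H) \cong (S \otimes H)^\mathfrak{g}.$$

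Finally, since $H$ is $\mathfrak{k}$-locally finite and $S$ is finite-dimensional, $S \otimes H$ is a locally finite $\mathfrak{g}$-module under the action just described. By complete reducibility of locally finite modules over the reductive Lie algebra $\mathfrak{g}$, the invariants functor $(S \otimes -)^\mathfrak{g}$ is exact on $\mathscr{H}_\lambda$. Hence $S^* \otimes U_\lambda$ is projective in $\mathscr{H}_\lambda$, and its direct summand $\mathscr{F}P$ is projective as well. The point requiring the most care is the derivation of the bimodule-to-equivariant-map correspondence: one must track the diagonal left and one-sided right $U(\mathfrak{g})$-actions on $S^* \otimes U_\lambda$ and confirm that the covariance forced by bimodule linearity is exactly $\ad$-equivariance of $f$; once that is in place, the exactness of $\mathfrak{g}$-invariants on locally finite $\mathfrak{g}$-modules closes the argument.
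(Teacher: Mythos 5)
Your proof is correct and takes essentially the same route as the paper: both identify $\mathscr F(S^*\otimes M(\lambda))$ with $S^*\otimes U_\lambda$ (via Lemma \ref{3.1}$(*)$ and the Kostant--Duflo identification $F(M(\lambda),M(\lambda))=U_\lambda$) so that $\mathscr FP$ is a direct summand, and both deduce projectivity from the exactness of $\mathfrak k$-invariants (equivalently, of $\mathfrak k$-equivariant Homs out of a finite-dimensional module), which holds because $\mathfrak k$ acts locally finitely and reductively, together with the observation that the right annihilation by $\ann_{Z(\mathfrak g)}M(\lambda)$ plus Duflo's theorem makes the relevant maps factor through $U_\lambda$. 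The only difference is organisational: the paper first reduces to the projectivity of $U_\lambda$ itself, quoting that tensoring on the left by finite-dimensional modules preserves projectives, whereas you prove $S^*\otimes U_\lambda$ projective in one step via the identification $\Hom(S^*\otimes U_\lambda,H)\cong (S\otimes H)^{\mathfrak k}$.
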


\begin {proof} Through tensor product with finite dimensional modules and direct sum decomposition, it is enough to show that $\mathscr FM(\lambda)=U_\lambda$ is projective in $\mathscr H_\lambda$.

Suppose $H\twoheadrightarrow H'$ is a surjective map of objects in $\mathscr H_\lambda$.  Since $\mathfrak k$ acts locally finitely and is semisimple, hence acts reductively, we obtain a surjection $H^\mathfrak k \twoheadrightarrow H^{'\mathfrak k}$ of invariants.  View $H^\mathfrak k$ as a trivial $\mathfrak g$ module.  Then $U(\mathfrak g)H^\mathfrak k= H^\mathfrak k U(\mathfrak g)$ is a $U(\mathfrak g)$ bimodule quotient of $H^\mathfrak k \otimes U_\lambda$, which is just a finite number, namely $\dim H^\mathfrak k$, of copies of $U_\lambda$.  Of course a similar assertion holds for $H'$.

On the other hand an element $\psi' \in \Hom_{U(\mathfrak g)-U(\mathfrak g)}(U_\lambda,H')$ is completely determined by $h':=\psi'(1) \in H^{'\mathfrak k}$, which by the first paragraph above generates a quotient $U_\lambda/I'$ of $U_\lambda$, so then $\psi'$ is the canonical projection $U_\lambda \twoheadrightarrow U_\lambda/I'$.  The inverse image of $h'$ in $H^\mathfrak k$ generates a quotient $U_\lambda/I$ of $U_\lambda$. which surjects to $U_\lambda/I'$.  Then the canonical projection $\psi:U_\lambda \twoheadrightarrow U_\lambda/I$ is the required inverse image of $\psi'$ to establish the projectivity of $U_\lambda$.

\end {proof}

\subsection{}\label{3.5}

Take $H \in \mathscr H_\lambda$ simple. By Lemma \ref {3.3} we may write $H=\mathscr FL$, for some $L \in \mathscr O$ simple.  Then the non-zero map $\theta^+_L:\mathscr F' H \rightarrow L$ is surjective.
In particular $\mathscr F'H \neq 0$.

Let $V$ be a finite dimensional $U(\mathfrak g)$ module.  Recall that $\lambda \in \mathfrak h^*$ is assumed dominant, so then $M(\lambda)$ is projective in $\mathscr O$. Let $\{P_i\}_{i\in I}$ be the set of indecomposable direct summands of $V^* \otimes M(\lambda)$.  Then $P_i:i \in I$ is an indecomposable projective module, hence the projective cover of some simple highest weight module $L_i$.  By Lemma \ref {3.4.1} the simples in $\mathscr O$ which so arise are exactly those for which $\mathscr FL\neq 0$.  We remark, though we do not use these facts, that this can be at most the simples in $\mathscr O$ whose weights lie in $\lambda +P$ and only exactly those if $\lambda$ is regular (see proof of \ref {3.7}).

The following result can be read off from \cite [Sect. 5.4]{BG}.  We give the proof for completeness.

\begin {prop}

\

(i) $\mathscr FL_i\neq 0$.  In particular $\theta^+_{L_i}:\mathscr F' \mathscr F L_i \rightarrow L_i$ is surjective.

\

(ii) $\theta^+_{P_i}$ is an isomorphism of $\mathscr F' \mathscr F P_i$ onto $P_i$.

\

(iii)  $\mathscr FL_i$ is the unique simple $U(\mathfrak g)$ bimodule quotient of $\mathscr FP_i$.  In particular $\mathscr FP_i$ is the projective cover of $\mathscr FL_i$.

\

(iv) Take $L,L' \in \mathscr O$ simple.  Assume $\mathscr FL$ and $\mathscr FL'$ are non-zero and isomorphic.  Then $L$ and $L'$ are isomorphic.
\end {prop}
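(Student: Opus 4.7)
The plan is to establish (i)--(iv) in sequence, leaning on Lemma \ref{3.4.1} (criterion for nonvanishing of $\mathscr{F}L$), Lemma \ref{3.3} (simplicity preservation under $\mathscr{F}$), the adjunction of Lemma \ref{3.1}(ii), the isomorphism $\theta^-_H$ of Corollary \ref{3.4}, the exactness of $\mathscr{F}$ from Lemma \ref{3.1}(i), and the projectivity of $\mathscr{F}P_i$ from Lemma \ref{3.4.2}. The two nontrivial inputs are (ii), which upgrades the surjection $\theta^+_{L_i}$ to an isomorphism on projective covers, and the ``dual'' fact already in hand that $\theta^-_H$ is an isomorphism for every $H \in \mathscr{H}^V_\lambda$.

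For (i), since $P_i$ is a direct summand of $V^* \otimes M(\lambda)$, decomposing $V^*$ into its simple constituents forces $P_i$ to be a direct summand of $S^* \otimes M(\lambda)$ for some simple finite-dimensional $S$; then Lemma \ref{3.4.1} gives $\mathscr{F}L_i \neq 0$. The map $\theta^+_{L_i} = \Theta'(\Id_{\mathscr{F}L_i})$ is therefore nonzero, and hence surjective since $L_i$ is simple. For (ii), observe that $\theta^+_{M(\lambda)}$ is the canonical map $U_\lambda \otimes_{U(\mathfrak{g})} M(\lambda) \to M(\lambda)$, $u \otimes m \mapsto um$, which is an isomorphism by the definition of $U_\lambda$. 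Since $\mathscr{F}$ commutes with left tensoring by finite-dimensional $U(\mathfrak{g})$-modules (Lemma \ref{3.1}) and so does $\mathscr{F}'$ by associativity of tensor product, and since $\theta^+$ is natural with respect to this tensoring, one deduces $\theta^+_{V^* \otimes M(\lambda)}$ is an isomorphism. Writing $V^* \otimes M(\lambda) = P_i \oplus Q$ and using naturality of $\theta^+$ with respect to the inclusion and projection, the isomorphism $\theta^+_{V^* \otimes M(\lambda)}$ splits as the direct sum $\theta^+_{P_i} \oplus \theta^+_Q$, so each is itself an isomorphism.

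For (iii), apply the exact functor $\mathscr{F}$ to the projective cover map $P_i \twoheadrightarrow L_i$: one obtains a surjection $\mathscr{F}P_i \twoheadrightarrow \mathscr{F}L_i$ onto a simple object (Lemma \ref{3.3} and (i)), while Lemma \ref{3.4.2} yields projectivity of $\mathscr{F}P_i$. For the uniqueness of the simple quotient, if $\mathscr{F}P_i$ surjects onto a simple $H$, then $H \cong \mathscr{F}L$ for some simple $L \in \mathscr{O}$ by Lemma \ref{3.3}, and the adjunction of Lemma \ref{3.1}(ii) combined with (ii) gives $\Hom(\mathscr{F}P_i, \mathscr{F}L) \cong \Hom(\mathscr{F}'\mathscr{F}P_i, L) \cong \Hom(P_i, L)$, which is nonzero only when $L \cong L_i$. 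Indecomposability of $\mathscr{F}P_i$ follows by reversing the argument: a nontrivial splitting $\mathscr{F}P_i = Q_1 \oplus Q_2$ would, via (ii) and $\mathscr{F}'$, produce a splitting $P_i \cong \mathscr{F}'Q_1 \oplus \mathscr{F}'Q_2$ with both summands nonzero (since $\mathscr{F}\mathscr{F}'Q_k \cong Q_k$ by Corollary \ref{3.4}), contradicting indecomposability of $P_i$. Hence $\mathscr{F}P_i$ is the projective cover of $\mathscr{F}L_i$. Finally for (iv), if $L, L' \in \mathscr{O}$ are simple with $\mathscr{F}L \neq 0$, their projective covers $P, P'$ are by Lemma \ref{3.4.1} direct summands of some common $V^* \otimes M(\lambda)$, so (ii) and (iii) apply and give that $\mathscr{F}P, \mathscr{F}P'$ are the projective covers of $\mathscr{F}L, \mathscr{F}L'$ in $\mathscr{H}^V_\lambda$. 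An isomorphism $\mathscr{F}L \cong \mathscr{F}L'$ therefore lifts to $\mathscr{F}P \cong \mathscr{F}P'$, and applying $\mathscr{F}'$ and invoking (ii) yields $P \cong P'$, whence $L \cong L'$.

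The main obstacle I expect is part (ii): establishing that $\theta^+_{P_i}$ is an honest isomorphism, as opposed to just the surjection one reads off from (i). The obstruction is that $\theta^+$ is not known to be an isomorphism on general objects of $\mathscr{O}^V$ (indeed $\mathscr{F}'$ can fail to be exact), so one must restrict to a class of objects---namely direct summands of $V^* \otimes M(\lambda)$---and argue by naturality from the base case $N = M(\lambda)$, where the map is the defining identification of $U_\lambda$. Once (ii) is available, parts (iii) and (iv) are routine applications of the uniqueness of projective covers.
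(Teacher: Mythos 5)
Your proof is correct and follows essentially the same strategy as the paper's: non-vanishing via Lemma \ref{3.4.1} for (i), the base case $\theta^+_{M(\lambda)}$ upgraded through tensoring and direct-summand splitting for (ii), exactness of $\mathscr F$ together with Lemmas \ref{3.3} and \ref{3.4.2} for (iii), and lifting the isomorphism to projective covers and applying $\mathscr F'$ for (iv). The only local differences are that you deduce bijectivity of $\theta^+_{P_i}$ directly from naturality of the counit with respect to the splitting of $V^*\otimes M(\lambda)$ (the paper first proves surjectivity via a diagram chase and then gets injectivity by comparing finite weight-space multiplicities), and in (iii) you identify the simple quotients by computing $\Hom(\mathscr FP_i,\mathscr FL)\iso\Hom(P_i,L)$ through the adjunction and (ii), where the paper instead passes through $\mathscr F'K_i$ --- both routes are sound, and your added indecomposability check for $\mathscr FP_i$ is a welcome extra precaution.
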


\begin {proof} (i) follows from Lemma \ref {3.4.1}.

  Recall that there is a surjective map $\phi:P_i\rightarrow L_i$.  Then through the covariant functors $\mathscr F'$ and  $\mathscr F$ which are both right exact, we obtain a surjective map $\Phi:=\mathscr F' \mathscr F \phi:\mathscr F' \mathscr F P_i \rightarrow \mathscr F' \mathscr F L_i$. Functoriality (specifically Lemma \ref {3.1}(ii)) gives a commutative diagram

$$
\begin{array}{ccc}
  \mathscr F' \mathscr F P_i & \stackrel{\theta^+_{P_i}}{\longrightarrow}& P_i\\
 \Phi\twoheaddownarrow && \downarrow  \phi \\
 \mathscr F' \mathscr F L_i& \stackrel{\theta^+_{L_i}}{\twoheadrightarrow} &  L_i
\end{array}
$$

 Yet $L_i$ is the unique quotient of $P_i$, so commutativity forces $\theta^+_{P_i}$ to be surjective.

One checks easily that $\theta_{M(\lambda)}$ is an isomorphism of $\mathscr F' \mathscr F M(\lambda)$ onto $M(\lambda)$.  Since these functors commute with tensor product, $\theta^+_{(V^*\otimes M(\lambda))}$ is an isomorphism of $\mathscr F' \mathscr F (V^* \otimes M(\lambda)$ onto $V^*\otimes M(\lambda)$. Then direct sum decomposition and comparison of weight space multiplicities (which are finite) forces $\theta^+_{P_i}$ to be also injective.  Hence (ii).

Let $K_i$ be a simple quotient of $\mathscr F P_i$. Since $K_i \in \mathscr H_\lambda$, we obtain $\mathscr F'K_i \neq 0$, by the first paragraph in \ref {3.5}. The covariant functor $\mathscr F'$ is right exact, so using the inverse of the isomorphism established in (ii) we obtain a composed map $P_i \iso \mathscr F' \mathscr F P_i \twoheadrightarrow \mathscr F'K_i$, which is surjective. Thus the non-zero module $\mathscr F'K_i$ has a non-zero simple quotient which is necessarily the unique simple quotient $L_i$ of $P_i$.   Since $\mathscr F$ is right exact, we obtain using Lemma \ref {3.2} that $K_i \iso \mathscr F \mathscr F' K_i \twoheadrightarrow\mathscr F L_i$.  Yet $\mathscr FL_i \neq 0$ by (i), hence simple by Lemma \ref {3.3}.  We conclude that $\mathscr FL_i$ is the unique simple quotient of $\mathscr FP_i$.  Yet $\mathscr FP_i$ is projective by Lemma \ref {3.4.2}.  Hence (iii).

The isomorphism $\mathscr FL \iso \mathscr FL'$ of simple modules lifts (as always) to an isomorphism of their projective covers $\mathscr FP \iso \mathscr FP'$. The assumption that $\mathscr FL$ is non-zero implies by Lemma \ref {3.4.1} that its projective cover $P$ is some $P_i$ as defined above and occurring in (ii).  The same holds for $L'$.  Applying the functor $\mathscr F'$ and using (ii) gives the isomorphism $P\iso P'$, hence (iv).

\end {proof}

\subsection{}\label{3.6}

Retain the above notation and take $V$ to be the finite dimensional $U(\mathfrak g)$ module we have fixed.

\begin {cor} $\mathscr F(V \otimes L_i)$ is the unique simple $A^\ltimes - U(\mathfrak g)$ bimodule quotient of $\mathscr F(V\otimes P_i)$.

\end {cor}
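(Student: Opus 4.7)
My plan is to deduce the corollary directly from Proposition \ref{3.5}(iii) by transporting it through the isomorphism $\mathscr F(V\otimes N)\cong V\otimes \mathscr FN$ established as a special case of (*) in \ref{3.1}, combined with the equivalence of categories $\mathscr T=V\otimes\cdot$ from Proposition \ref{2.6}.

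First I would note that taking $N=L_i$ and $N=P_i$ in the identification $V\otimes F(M(\lambda),N)\iso F(M(\lambda),V\otimes N)$ of $A^\ltimes-U(\mathfrak g)$ bimodules yields canonical isomorphisms $\mathscr F(V\otimes L_i)\cong V\otimes \mathscr FL_i=\mathscr T(\mathscr FL_i)$ and $\mathscr F(V\otimes P_i)\cong V\otimes \mathscr FP_i=\mathscr T(\mathscr FP_i)$. Thus the statement translates to: $\mathscr T(\mathscr FL_i)$ is the unique simple quotient of $\mathscr T(\mathscr FP_i)$ inside $\mathscr H_\lambda^V$.

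Now by Proposition \ref{3.5}(iii), $\mathscr FL_i$ is the unique simple $U(\mathfrak g)$ bimodule quotient of $\mathscr FP_i$, i.e.\ the unique simple quotient of $\mathscr FP_i$ in $\mathscr H_\lambda$. Since $\mathscr T$ (with quasi-inverse $\mathscr T'$) is an equivalence between $\mathscr H_\lambda$ and $\mathscr H_\lambda^V$ by Proposition \ref{2.6}, it is exact, sends simples to simples (Corollary \ref{2.4}(ii)), and induces a bijection between (isomorphism classes of) quotients. Consequently the simple quotients of $\mathscr T(\mathscr FP_i)$ in $\mathscr H_\lambda^V$ are precisely the images under $\mathscr T$ of the simple quotients of $\mathscr FP_i$ in $\mathscr H_\lambda$. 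Applying \ref{3.5}(iii) then gives the desired uniqueness statement.

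The only point requiring care, and what I regard as the main technical obstacle, is to verify that the isomorphism from \ref{3.1}(*) is compatible with the full $A^\ltimes-U(\mathfrak g)$ bimodule structure (not merely with the $U(\mathfrak g)$-bimodule or diagonal $\mathfrak k$-structure), so that the translation into $\mathscr H_\lambda^V$ really does identify $\mathscr F(V\otimes P_i)$ with $\mathscr T(\mathscr FP_i)$ as bimodules. Once this compatibility is checked (which is a routine chase of definitions, using that the $A^\ltimes$-action comes from the left factor $V$ via the smash product convention of \ref{1.3}), the rest of the argument is a formal transport through the equivalence $\mathscr T$.
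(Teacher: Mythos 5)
Your argument is correct and coincides with the route the paper itself offers as an alternative proof: namely, applying the equivalence of categories of Proposition 2.6 (together with the bimodule isomorphism $V\otimes F(M(\lambda),N)\iso F(M(\lambda),V\otimes N)$ of 3.1$(*)$, which already supplies the compatibility you flag as the main point to check) to Proposition 3.5(iii). No gap to report.
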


\begin {proof} Exactly the same proof as that of the first part of Proposition \ref {3.5}(iii) applies.  Alternatively one may apply Proposition \ref {2.6} to Lemma \ref {3.5}.
\end {proof}

\subsection{}\label{3.7}
 We note the following extension of the Bernstein-Gelfand \cite {BG} equivalence of categories.  It will not be used.

\begin {thm}  If $\lambda$ is regular, then $\mathscr F:\mathscr O^V_\Lambda \rightarrow \mathscr H^V_\lambda$ is an equivalence of categories with inverse functor $\mathscr F'$.
\end {thm}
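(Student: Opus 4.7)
The plan is to verify that the adjoint pair $(\mathscr F', \mathscr F)$ of Lemma \ref{3.1}(ii) is in fact an adjoint equivalence, by showing that the unit $\theta^-$ and the counit $\theta^+$ are both natural isomorphisms.  One half is already in place: Corollary \ref{3.4} gives $\theta^-_H\colon H \iso \mathscr F\mathscr F' H$ for every $H \in \mathscr H^V_\lambda$, with no hypothesis on $\lambda$.  The entire role of regularity is therefore to upgrade $\theta^+_N\colon \mathscr F'\mathscr F N \to N$ to an isomorphism for every $N \in \mathscr O^V_\Lambda$.

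To accomplish this I would reduce to the case of indecomposable projectives via a five-lemma argument.  Since $\mathscr F$ is exact (Lemma \ref{3.1}(i)) and $\mathscr F'$ is right exact, their composite $\mathscr F'\mathscr F$ is right exact.  The category $\mathscr O^V_\Lambda$ has enough projectives, transported from $\mathscr O_\Lambda$ via the equivalence $\mathscr T$ of Proposition \ref{2.6}.  Choose a projective presentation $P_1 \to P_0 \to N \to 0$ in $\mathscr O^V_\Lambda$; naturality of $\theta^+$ yields a commutative diagram
\[
\begin{array}{ccccccc}
\mathscr F'\mathscr F P_1 & \longrightarrow & \mathscr F'\mathscr F P_0 & \longrightarrow & \mathscr F'\mathscr F N & \longrightarrow & 0\\
\theta^+_{P_1}\downarrow & & \theta^+_{P_0}\downarrow & & \theta^+_N\downarrow & & \\
P_1 & \longrightarrow & P_0 & \longrightarrow & N & \longrightarrow & 0
\end{array}
\]
with exact rows, and the five-lemma reduces matters to verifying $\theta^+_P$ is an isomorphism for $P$ an indecomposable projective of $\mathscr O^V_\Lambda$.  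By the equivalence $\mathscr T$, any such $P$ has the form $V\otimes P'$ with $P'$ an indecomposable projective of $\mathscr O_\Lambda$.

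The crucial input is then the translation-functor statement underlying the Bernstein-Gelfand theory and alluded to at the end of the first paragraph of \ref{2.1}: when $\lambda$ is regular, every indecomposable projective $P'$ lying in the block of $\chi_\lambda$ in $\mathscr O_\Lambda$ is a direct summand of $W^* \otimes M(\lambda)$ for some finite dimensional $U(\mathfrak g)$ module $W$.  Granting this, $V\otimes P'$ is a direct summand of $V\otimes W^* \otimes M(\lambda) = (W\otimes V^*)^* \otimes M(\lambda)$; applying Proposition \ref{3.5}(ii) with $W\otimes V^*$ playing the role of the finite dimensional parameter there delivers $\theta^+_{V\otimes P'}$ as an isomorphism, which is the remaining assertion.

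The principal obstacle is the translation-functor input from Bernstein-Gelfand; once that is granted the remainder is a formal combination of adjunction, exactness of $\mathscr F$, and the five-lemma.  A minor caveat is that any indecomposable projective whose unique simple quotient has central character distinct from $\chi_\lambda$ is annihilated by $\mathscr F$, so in practice $\mathscr O^V_\Lambda$ should be read as restricted to modules whose composition factors $L(\mu)$ satisfy $\mu \in W.\lambda$, consistent with the single-central-character constraint built into $\mathscr H^V_\lambda$.
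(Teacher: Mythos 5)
Your overall strategy is viable and genuinely different from the paper's. The paper does not reduce to projectives: it observes that for regular $\lambda$ every simple $L(\mu)$ with $\mu\in\Lambda$ satisfies $\mathscr FL(\mu)\neq 0$ (Lemma \ref{6.2}, since $\stab_{W_\lambda.}\lambda$ is trivial), and then kills the kernel of $\theta^+_L$ by exactness of $\mathscr F$: the kernel is annihilated by $\mathscr F$, has finite length, and $\mathscr F$ annihilates no simple of $\mathscr O^V_\Lambda$, so it vanishes. Your route --- right-exactness of $\mathscr F'\mathscr F$, a projective presentation, the four/five-lemma, and the translation-functor fact that indecomposable projectives are summands of $W^*\otimes M(\lambda)$ --- is the same mechanism the paper itself uses later for $\mathscr I$ in Lemma \ref{7.5}, and the translation input you need is available internally: Lemma \ref{3.4.1} says $P(\mu)$ is a summand of some $S^*\otimes M(\lambda)$ exactly when $\mathscr FL(\mu)\neq 0$, and Lemma \ref{6.2} says this holds for \emph{all} $\mu\in\Lambda$ when $\lambda$ is regular.

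However, your final ``caveat'' is false and it creates a real gap. You assert that an indecomposable projective whose simple quotient has central character different from $\chi_\lambda$ is annihilated by $\mathscr F=F(M(\lambda),\cdot)$, and accordingly you only verify $\theta^+$ on projectives in the block of $\chi_\lambda$. The constraint defining $\mathscr H_\lambda$ is on the \emph{right} action of $Z(\mathfrak g)$ (the $M(\lambda)$ slot) only; the left central character of $F(M(\lambda),L(\mu))$ is $\chi_\mu$ and may differ from $\chi_\lambda$. By Lemma \ref{6.2}, for regular $\lambda$ one has $F(M(\lambda),L(\mu))\neq 0$ for every $\mu\in\lambda+P$, whether or not $\mu\in W.\lambda$; indeed, were your claim true, $\mathscr F$ would kill nonzero objects of $\mathscr O^V_\Lambda$ and could not be an equivalence, and $\mathscr H^V_\lambda$ visibly contains objects such as $F(M(\lambda),V\otimes M(\mu))$ with arbitrary left central character $\chi_\mu$, $\mu\in\Lambda$. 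As written, your argument therefore proves the isomorphism $\theta^+_N$ only for $N$ in one block and dismisses the remaining blocks on incorrect grounds. The repair is to delete the caveat and run your projective-presentation argument over all of $\mathscr O_\Lambda$, using Lemmas \ref{3.4.1} and \ref{6.2} to realize \emph{every} $P(\mu)$, $\mu\in\Lambda$, as a direct summand of some $W^*\otimes M(\lambda)$.
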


\begin {proof}   Take $L \in \mathscr O^V_\Lambda$ simple. If $\lambda$ is regular, then $\mathscr FL(\mu)\neq 0$, for all $\mu \in \Lambda$ (see Lemma \ref {6.2} or \cite [8.4.1]{J2}).  Then by Proposition \ref {2.6} and Lemma \ref {3.3}, $\mathscr F L$ is simple and non-zero. Recalling \ref {3.2}, set $M:=\Ker (\mathscr F' \mathscr F L \twoheadrightarrow L)$. Since $\mathscr F$ is exact and covariant $\mathscr FM=\Ker(\mathscr F\mathscr F' \mathscr F L \twoheadrightarrow \mathscr FL)$. Then by Lemmas \ref {3.2} and  \ref {3.3}, $\mathscr FM=0$.  Yet $M \in \mathscr O^V_\Lambda$ so has finite length, whilst as we have seen $\mathscr FL\neq 0$ for every simple in this category.
 Hence $M=0$.  Then the conclusion obtains from Lemma \ref {3.1}(ii).
\end {proof}

\section{Further Consequences of Frobenius Reciprocity}

\subsection{}\label{4.1}

Let $N$ be a finite dimensional $B_\lambda$ module.  Then $A\otimes_BN$ is a left $A$ module and hence a left $A^\ltimes$ module.  We claim that it also admits a right $U(\mathfrak g)$ module structure obtained via right
multiplication on the first factor.  Indeed for all $x \in
\mathfrak g$ the map $(a\otimes bn,x)\mapsto (ax,bn)$ is well
defined since $(ab\otimes n,x)$ is mapped to $(abx\otimes
n)=(axb\otimes n)=(ax\otimes bn)$, by the invariance of $b \in B$.
Note that this right action is compatible with right action of $U(\mathfrak g)$ on $A$ introduced in \ref {1.1}.

Since $N$ is finite dimensional it follows from the above
that $A\otimes_BN = A_\lambda \otimes_B N $ has finite multiplicities for the diagonal action of $U(\mathfrak g)$.  Moreover under the right action of $Z(\mathfrak g)$, the ideal $\ann_{Z(\mathfrak g)}M(\lambda)$ annihilates $A\otimes_BN$, whilst the left action of $Z(\mathfrak g)$ is finite.  Thus $A\otimes_BN \in \mathscr H^V_\lambda$.

\subsection{}\label{4.2}

Recall that the standard form of Frobenius reciprocity asserts the following.  Let $R,R^\prime$ be rings, let $M$ be a $R'-R$ bimodule, $N$ a left $R$ module and $P$ a left $R'$ module. Then Frobenius reciprocity is the isomorphism of additive groups
$$F:\Hom_{R'}(M\otimes_R N,P) \iso \Hom_R(N,\Hom_{R'}(M,P)), \eqno {(*)}$$
given by $F_\psi(n)(m) = \psi(m\otimes n)$.

Let $N$ be a finite dimensional $B_\lambda$ module. Note that $A\otimes_BN$ identifies with $A_\lambda \otimes_BN$.  Again $A$ can be viewed as a left $R':=A^\ltimes \otimes U(\mathfrak g)^{op}$ module and a right $R:=B$ module as these two actions commute (because $B$ consists of $\mathfrak k$ invariants).

\begin {prop} For all $M' \in \mathscr O^V$, Frobenius reciprocity gives a vector space isomorphism
$$\Hom_{A^\ltimes-U(\mathfrak g)}(A \otimes_B N, F(M(\lambda),M')) \iso \Hom_B(N,N'), \eqno{(**)}$$
where $N'$ is the $B_\lambda$ module $\Hom_{U(\mathfrak g)}(M(\lambda),M')$.
\end {prop}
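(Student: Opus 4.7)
The proposal is to apply the standard Frobenius reciprocity $(*)$ stated just above, with $R=B$, $R'=A^\ltimes\otimes U(\mathfrak g)^{op}$, $M=A$, and $P=F(M(\lambda),M')$, and then identify the inner Hom produced. Here $A$ is an $R'$-$R$ bimodule: left $A^\ltimes$-action via the isomorphism $\zeta$ of 1.2, right $U(\mathfrak g)$-action commuting with it, and right $B$-action by restriction of right multiplication, which commutes with both thanks to $B$ being the $\mathfrak k$-invariant subalgebra. Since $F(M(\lambda),M')\in\mathscr H^V_\lambda$ is a left $R'$-module and $N$ a left $R$-module, $(*)$ immediately yields
$$\Hom_{A^\ltimes-U(\mathfrak g)}(A\otimes_B N,F(M(\lambda),M'))\iso\Hom_B\bigl(N,\Hom_{A^\ltimes-U(\mathfrak g)}(A,F(M(\lambda),M'))\bigr).$$
It therefore remains to identify the inner Hom with $N'$ as a left $B$-module.

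For that identification I would use that $A\cong A^\ltimes$ is free of rank one as a left $A^\ltimes$-module, generated by $1_A$, so any such $\phi$ is determined by $f:=\phi(1_A)$ via $\phi(a)=a\cdot f$. Invoking the remark at the end of 1.2 that the right action of $u\in U(\mathfrak g)$ on $A$ corresponds to right multiplication by $\Id_V\otimes u$ in the smash product, right $U(\mathfrak g)$-linearity of $\phi$ reduces to the single equation
$$(\Id_V\otimes u)\cdot f=f\cdot u,\qquad u\in U(\mathfrak g),$$
where the left side is post-composition with $u$ acting diagonally on $M'$ and the right side is pre-composition with $u$ on $M(\lambda)$. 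This is precisely the condition that $f\in\Hom_{U(\mathfrak g)}(M(\lambda),M')=N'$; conversely any such $f$ is automatically $\mathfrak k$-fixed, so lies in $F(M(\lambda),M')$, and the formula $\phi_f(a):=a\cdot f$ defines a morphism of the required type. The bijection $\phi\leftrightarrow f$ is $B$-equivariant because the left $B$-action on the inner Hom induced by right multiplication on $A$ sends $\phi$ to $a\mapsto\phi(ab)$, whose value at $a=1_A$ is $\phi(b)=b\cdot f$ by left $A^\ltimes$-linearity, which is exactly the post-composition action of $B\subset A$ on $N'$ described in 1.7.

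The only real subtlety I anticipate is the bookkeeping needed to reconcile the two incarnations of $B$, as invariants in $A$ and as invariants in $A^\ltimes$, and the attendant compatibility of the componentwise $A$-action on $M'=V\otimes M$ with the diagonal $U(\mathfrak g)$-action once one restricts to $B=(\End V\otimes U(\mathfrak g))^{\mathfrak k}$; this is however a tautology given the definition of $B$. Beyond these identifications the argument consists of nothing more than the universal property of a rank-one free module combined with $(*)$, so I do not foresee any genuine obstacle.
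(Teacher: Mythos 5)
Your proposal is correct, and its first step coincides with the paper's: both apply the standard Frobenius reciprocity $(*)$ with $R=B$, $R'=A^\ltimes\otimes U(\mathfrak g)^{op}$ to reduce the claim to identifying the inner Hom $N'':=\Hom_{A^\ltimes-U(\mathfrak g)}(A_\lambda,F(M(\lambda),M'))$ with $N'$ as a left $B$ module. Where you diverge is in that identification. The paper performs it by two further adjunctions: Lemma \ref{3.1}(ii) converts $N''$ into $\Hom_{A^\ltimes}(A_\lambda\otimes_{U(\mathfrak g)}M(\lambda),M')$, the induced module is recognised as $\End V\otimes_{\mathbb C}M(\lambda)$, and then \ref{2.4}$(**)$ strips off the $\End V$ factor to leave $\Hom_{U(\mathfrak g)}(M(\lambda),M')$. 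You instead exploit that $A\cong A^\ltimes$ is free of rank one as a left $A^\ltimes$ module and evaluate at $1_A$, turning right $U(\mathfrak g)$-linearity into the single intertwining equation $(\Id_V\otimes u)\cdot f=f\cdot u$, which (since $\Id_V\otimes u$ acts diagonally on $M'=V\otimes M$) is exactly membership in $N'$; the observation that such $f$ is automatically $\mathfrak k$-invariant, hence already in $F(M(\lambda),M')$, and the check of $B$-equivariance of $\phi\leftrightarrow\phi(1_A)$ are both correct and necessary. Your route is more elementary and self-contained (it needs only the remark at the end of \ref{1.2} that the right $U(\mathfrak g)$-action is right multiplication by $\Id_V\otimes U(\mathfrak g)$ in the smash product, plus the fact that the left $A^\ltimes$- and right $U(\mathfrak g)$-actions on $F(M(\lambda),M')$ commute, which you use implicitly when passing from the equation at $a=1_A$ to general $a$); the paper's route has the advantage of reusing the adjunction machinery of Sections 2--3 that recurs throughout, so the two identifications of $N''$ with $N'$ are seen to be the same map only after unwinding. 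Either way the result is the same isomorphism, so there is no gap.
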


\begin {proof} Since $A_\lambda\otimes_BN \in \mathscr H^V_\lambda$, we can replace $F(M(\lambda),M'))$ in the left hand side by $\Hom(M(\lambda), M')$. \emph{Set} $N'':=\Hom_{A^\ltimes-U(\mathfrak g)}(A_\lambda,F(M(\lambda), M'))$. Then by $(*)$ the left hand factor maps isomorphically to $\Hom_B(N,N'')$.

Now $A_\lambda \in \mathscr H^V_\lambda$ and so by Lemma \ref {3.1}(ii), Frobenius reciprocity gives an isomorphism of $N''$ onto $\Hom_{A^\ltimes}(A_\lambda \otimes_{U(\mathfrak g)}M(\lambda), M')$.  In this we may replace $A_\lambda \otimes_{U(\mathfrak g)}M(\lambda)$, by $\End V \otimes_\mathbb C M(\lambda)$.  Then by \ref {2.4} $(**)$, Frobenius reciprocity gives an isomorphism of $N''$ onto $\Hom_{U(\mathfrak g)}(M(\lambda), \Hom_{\End V}(\End V,M'))$, which is just $N'$, as required.
\end {proof}

\section{Proof of Main Theorem}

\subsection{}\label{5.0}

We start this section with a few remarks which explain yet another reason why we took a rather asymmetric definition of the $U(\mathfrak g)$ bimodule structure of $A$ in \ref {1.1}.

Let $M,N,V'$ be $U(\mathfrak g)$ modules with $V'$ finite dimensional.  Given $\xi \in V'^*$ define a map $\xi:V'\otimes N \rightarrow N$, by evaluation on the first factor.  Then Frobenius reciprocity gives an isomorphism $F:\Hom(M,V'\otimes N)\iso \Hom(M\otimes V'^*,N)$ of vector spaces given by $F_\psi(m\otimes \xi)=\xi(\psi(m))$. Notice that since $V'$ is finite dimensional, we may interchange $V'$ and $V'^*$ in the above.  On the other hand although both sides are $U(\mathfrak g)$ bimodules, this is not an isomorphism of $U(\mathfrak g)$ bimodules.  Indeed let us write $\psi(m)=v'\otimes n$, with the convention that the right hand side may be a sum.  Then for all $x \in \mathfrak g$, one has
$$xF_\psi(m\otimes \xi)=\xi(v')xn, \quad F_\psi x(m\otimes \xi)= \xi(\psi(xm))+(x\xi)(v')n,$$
whilst
$$F_{x\psi}(m\otimes \xi)=\xi(v')xn+\xi(xv')n, \quad F_{\psi x}(m\otimes \xi)= \xi(\psi(xm)).$$

On the other hand $$[(xF_\psi-F_\psi x) - (F_{x\psi}-F_{\psi x})](m \otimes \xi)=-((x\xi)(v')+\xi(xv'))n=0.$$

Thus $F$ is an isomorphism of $U(\mathfrak k)$ modules, as is well-known.

Now replace $N$ by $V\otimes N$ in the above. Taking $M=N = M(\lambda), V'=V^*$, it follows that
$$F:F(M(\lambda), V\otimes (V^* \otimes M(\lambda))\iso F(V\otimes M(\lambda), V\otimes M(\lambda)),\eqno {(*)}$$
is an isomorphism of $U(\mathfrak k)$ modules (but not an isomorphism of $U(\mathfrak g)$ bimodules).

On the other hand our \textit{definition} of the $U(\mathfrak g)$ bimodule structure of $A$ means that it is the left hand side of $(*)$ which is isomorphic to $A_\lambda$ as a $U(\mathfrak g)$ bimodule.  In particular it is the left hand side of $(*)$ which is isomorphic to $A_\lambda$ as a left $U(\mathfrak g)$ module.

Taking $\mathfrak k$ invariants we conclude that

\begin {lemma}
$\Hom_{U(\mathfrak g)}(M(\lambda), V\otimes (V^* \otimes M(\lambda))$ is isomorphic to $B_\lambda$ as a left $B$ module.

\end {lemma}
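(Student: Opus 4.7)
The plan is simply to take $\mathfrak{k}$-invariants in the $U(\mathfrak{g})$-bimodule isomorphism
\[
F(M(\lambda),\,V\otimes (V^*\otimes M(\lambda))) \;\cong\; A_\lambda
\]
highlighted in the paragraph just before the lemma. To assemble that isomorphism, I would apply \ref{3.1}$(*)$ twice: first with $V'=V$ and $N = V^*\otimes M(\lambda)$, yielding an $A^\ltimes\!-\!U(\mathfrak{g})$ bimodule identification $V\otimes F(M(\lambda),V^*\otimes M(\lambda)) \iso F(M(\lambda),V\otimes V^*\otimes M(\lambda))$; then with $V'=V^*$ and $N=M(\lambda)$, collapsing the inner factor via $F(M(\lambda),V^*\otimes M(\lambda))\cong V^*\otimes U_\lambda$. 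Composing, the left-hand side becomes $V\otimes V^*\otimes U_\lambda = \End V\otimes U_\lambda = A_\lambda$, and the first step records that the left $A^\ltimes$-action on the contracted form is precisely component-wise action on the distinguished $V$ factor of the target.

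Taking $\mathfrak{k}$-invariants then yields the lemma. On the left, invariants of $F(M,N)$ under the adjoint $\mathfrak{k}$-action pick out the $U(\mathfrak{g})$-linear maps, so $F(M(\lambda),\,V\otimes(V^*\otimes M(\lambda)))^{\mathfrak{k}} = \Hom_{U(\mathfrak g)}(M(\lambda),\,V\otimes(V^*\otimes M(\lambda)))$. On the right, $A_\lambda^{\mathfrak{k}} = B_\lambda$ by the definition in \ref{1.6}. Since $B=(A^\ltimes)^{\mathfrak{k}}$ and the left $A^\ltimes$-action commutes with the adjoint $\mathfrak{k}$-action, the bimodule isomorphism descends to an isomorphism of left $B$-modules, which is exactly what is claimed.

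The one point requiring care is the identification of module structures. The isomorphism $F$ of the preceding display $(*)$ in \ref{5.0} is only a $U(\mathfrak{k})$-isomorphism in general, not an isomorphism of $U(\mathfrak{g})$-bimodules. The reason the argument goes through is precisely the asymmetric bimodule convention of \ref{1.1} — as the author has just emphasized, it is the LHS of \ref{5.0}$(*)$ that is $U(\mathfrak{g})$-bimodule isomorphic to $A_\lambda$, not the RHS. Once this convention is respected (i.e., one always matches the left $A^\ltimes$-action with the action on the distinguished $V$ factor), the steps above are pure bookkeeping and no further obstacle appears.
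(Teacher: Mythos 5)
Your proposal is correct and follows essentially the same route as the paper: identify $F(M(\lambda), V\otimes(V^*\otimes M(\lambda)))$ with $A_\lambda$ as a bimodule (the paper does this by appealing to the definition of the bimodule structure of $A$, which amounts to your two applications of \ref{3.1}$(*)$), and then take $\mathfrak k$-invariants. You have also correctly isolated the key subtlety — that the identification must be made on the side compatible with the asymmetric bimodule convention of \ref{1.1}, not via the $U(\mathfrak k)$-only isomorphism $(*)$ of \ref{5.0} — which is precisely the point the paper emphasizes in the paragraph preceding the lemma.
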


\subsection{}\label{5.1}

Let $P$ be an indecomposable direct summand $V^*\otimes M(\lambda)$.  Since the latter is projective in $\mathscr O$, it follows that $P$ is an indecomposable projective in $\mathscr O$ and hence the projective cover $P(\mu)$ of a simple highest weight module $L(\mu)$.  Thus we have surjective $U(\mathfrak g)$ module maps $V^*\otimes M(\lambda)\twoheadrightarrow P(\mu) \twoheadrightarrow L(\mu)$.  Then by Frobenius reciprocity the $B_\lambda$ module $S:=\Hom_{U(\mathfrak g)}(M(\lambda),V\otimes L(\mu))$ is non-zero.  A fortiori $K:=\mathscr F (V\otimes L(\mu))=F(M(\lambda),V\otimes L(\mu))\neq 0$ and by Lemma \ref {3.3} is a simple $A^\ltimes - U_\lambda$  bimodule.  Furthermore by Corollary \ref {3.6}, it is the unique simple $A^\ltimes-U_\lambda$ quotient of $H:=F(M(\lambda),V\otimes P(\mu))$. The invariant summand $M:=\Hom_{U(\mathfrak g)}(M(\lambda),V\otimes P(\mu))$ is a $B_\lambda$ module and we obtain by restriction a $B_\lambda$ module surjection of $M$ onto $S$.

Consider $M$ as a $B_\lambda$ submodule of the $A^\ltimes -U_\lambda$ module $H$. We write $AM$ for the subspace of $H$ generated by the left action of $A$ or equivalently of $A^\ltimes$ on $M$.  It is a non-zero $A^\ltimes-U_\lambda$ bisubmodule of $H$, and just the image of $A\otimes_BM$ in $H$, under the multiplication map $\mu:a\otimes m \mapsto am$.  A similar result holds replacing the pair  $(H,M)$ by the pair $(K,S)$.

\begin {lemma} The multiplication map $\mu$ is an $A^\ltimes - U(\mathfrak g)$ bimodule isomorphism of $A_\lambda\otimes_{B_\lambda}M$ onto $H$.
\end {lemma}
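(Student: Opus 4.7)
The map $\mu: a \otimes m \mapsto am$ is plainly a homomorphism of $A^\ltimes$-$U(\mathfrak g)$ bimodules, since it is induced by the natural left action of $A^\ltimes$ on $H$ through its action on $A_\lambda$ (via $\zeta$ of \ref{1.2}), together with the right action of $U(\mathfrak g)$ on the second factor of $A_\lambda$. The content to be proved is therefore bijectivity. My strategy is to reduce, via additivity and naturality, to the ``universal'' case in which $P(\mu)$ is replaced by the full projective $V^* \otimes M(\lambda)$, and there to recognise $\mu$ as the canonical isomorphism $A_\lambda \otimes_{B_\lambda} B_\lambda \iso A_\lambda$.

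Since $\lambda$ is dominant $M(\lambda)$ is projective in $\mathscr O$, and hence so is $V^* \otimes M(\lambda)$ by \ref{2.1}; write an explicit direct sum decomposition $V^* \otimes M(\lambda) = P(\mu) \oplus Q$ into indecomposable projective summands. The additive functors $V \otimes (\cdot)$, $F(M(\lambda), \cdot)$ and $\Hom_{U(\mathfrak g)}(M(\lambda), \cdot)$ transport this decomposition into $A^\ltimes$-$U_\lambda$ bimodule (resp.\ $B_\lambda$-module) direct sum decompositions exhibiting $H$ as a summand of $H' := F(M(\lambda), V \otimes V^* \otimes M(\lambda))$ and $M$ as a summand of $M' := \Hom_{U(\mathfrak g)}(M(\lambda), V \otimes V^* \otimes M(\lambda))$. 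Additivity of $A_\lambda \otimes_{B_\lambda} (\cdot)$ combined with the naturality of $\mu$ then shows that it suffices to prove bijectivity of the analogous map $\mu': A_\lambda \otimes_{B_\lambda} M' \to H'$.

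In that universal case the Lemma of \ref{5.0} provides a $U(\mathfrak g)$-bimodule isomorphism $H' \iso A_\lambda$, whose restriction to $\mathfrak k$-invariants gives $M' \iso B_\lambda$ as a left $B$-module. Both isomorphisms arise from a single underlying Frobenius reciprocity identification, so the embedding $M' \hookrightarrow H'$ is carried exactly to the subalgebra embedding $B_\lambda \hookrightarrow A_\lambda$. Under these compatible identifications $\mu'$ becomes the ordinary multiplication map $A_\lambda \otimes_{B_\lambda} B_\lambda \to A_\lambda$, which is manifestly an isomorphism, completing the reduction.

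The main technical obstacle I anticipate is verifying that the identifications of the Lemma of \ref{5.0} also intertwine the left $\End V$ action (and hence the full $A^\ltimes$-action), not merely the $U(\mathfrak g)$-bimodule structure which the Lemma explicitly records. This should reduce to the observation that $\End V$ acts on $H'$ through its natural action on the highlighted first factor $V$ of $V \otimes V^* \otimes M(\lambda)$, in precisely the manner in which it acts on the first tensorand of $A_\lambda = \End V \otimes U_\lambda$ — a compatibility that is built into the very choice of bimodule structure made in \ref{1.1} and reaffirmed at the start of \ref{5.0}.
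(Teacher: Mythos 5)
Your proof is correct in substance but takes a genuinely different route from the paper's, and the step you yourself flag at the end carries the real weight. The paper splits the statement in two. Surjectivity is proved summand by summand: $H=F(M(\lambda),V\otimes P(\mu))$ has $K=F(M(\lambda),V\otimes L(\mu))$ as its unique simple $A^\ltimes$-$U_\lambda$ quotient (Corollary \ref{3.6}), the composite $A\otimes_B M\to H\to K$ is onto because $A\otimes_B S\twoheadrightarrow K$ already is, and a bisubmodule of $H$ mapping onto the unique simple quotient of the projective cover $H$ must be all of $H$. Injectivity is then obtained by exactly your globalisation over all summands of $V^*\otimes M(\lambda)$, but only at the level of $U(\mathfrak k)$-modules: $\oplus_i A_\lambda\otimes_{B_\lambda}M_i=A_\lambda\otimes_{B_\lambda}B_\lambda=A_\lambda$ and $\oplus_i H_i\iso A_\lambda$ have the same finite $\mathfrak k$-multiplicities, so the surjection is forced to be injective. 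The advantage of that split is that Lemma \ref{5.0} is used only as a $U(\mathfrak k)$-module (respectively left $B$-module) statement. Your one-shot identification of $\mu'$ with the canonical isomorphism $A_\lambda\otimes_{B_\lambda}B_\lambda\iso A_\lambda$ instead requires upgrading the identification $F(M(\lambda),V\otimes(V^*\otimes M(\lambda)))\iso A_\lambda$ to an isomorphism of $A^\ltimes$-$U_\lambda$ bimodules carrying $M'$ onto $B_\lambda$ compatibly with the right $B_\lambda$-structures. This is true --- it follows from \ref{3.1}$(*)$ applied to $F(M(\lambda),V^*\otimes M(\lambda))=V^*\otimes U_\lambda$ together with the identification in \ref{1.2} of the left $U(\mathfrak g)$-action with left smash-product multiplication --- but it is precisely the sort of delicate point about actions the paper warns of (note that \ref{5.0} emphasizes its Frobenius map is \emph{not} a bimodule map), and your argument is not complete until that check is written out. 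Your reduction to the universal case is otherwise sound, since each $H_i$ is an $A^\ltimes$-$U_\lambda$ summand of $H'$ containing $M_i$, so $\mu'$ is block-diagonal and bijectivity descends to each block.
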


\begin {proof}
 The non-zero $A^\ltimes-U_\lambda$ bimodule map  $\mu: A\otimes_BS \rightarrow K$ is surjective by the simplicity $K$.  Recalling the $B_\lambda$ module surjection $M\twoheadrightarrow S$ we obtain an $A$ module surjection
$A\otimes_BM \twoheadrightarrow A\otimes_BS$, since tensor product is right exact.  Thus we obtain a commutative diagram

$$
\begin{array}{ccc}
   A\otimes_BM & {\twoheadrightarrow}& A\otimes_BS\\
 \mu\downarrow && \twoheaddownarrow  \mu \\
 H&\stackrel{\psi}{\longrightarrow} & K
\end{array}
$$
forcing $\psi$ to be surjective. Thus $\im(A\otimes_B M \stackrel{\mu}{\rightarrow}H)$ surjects under $\psi$ to the unique simple quotient $K$ of $H$. Hence $A\otimes_B M \stackrel{\mu}{\twoheadrightarrow}H$.

The above conclusion holds for every direct summand $P_i$ of $V^*\otimes M(\lambda)$, that is to say (setting $M_i=\Hom_{U(\mathfrak g)}(M(\lambda),V\otimes P_i), H_i=F(M(\lambda),V\otimes P_i)$), one has
$$A_\lambda\otimes_{B_\lambda}M_i:=A_\lambda\otimes_{B_\lambda} \Hom_{U(\mathfrak g)}(M(\lambda),V\otimes P_i) \twoheadrightarrow F(M(\lambda),V\otimes P_i)=:H_i. \eqno{(*)}$$

 By Lemma \ref {5.0}, $\Hom_{U(\mathfrak g)}(M(\lambda), V\otimes (V^* \otimes M(\lambda))$ is just $B_\lambda$ as a left $B$ module.  Thus $A_\lambda=A_\lambda \otimes_{B_\lambda}B_\lambda = \oplus_i A\otimes_BM_i$, whilst as noted in \ref {5.0}, one has  $A_\lambda \iso F(M(\lambda), V\otimes (V^* \otimes M(\lambda))=\oplus_i\mathscr F(V\otimes P_i)=\oplus_iH_i$, as $U(\mathfrak k)$ modules.  Comparison of multiplicity of simple $\mathfrak k$ submodules (which are finite by \ref {3.4}) forces injectivity in $(*)$.
\end {proof}

\subsection{}\label{5.2}

We now turn to a proof of Theorem \ref {1.7}.

Let $N$ be a simple $B_\lambda$ module.  Then $A\otimes_BN \in \mathscr H^V_\lambda$.  Consequently by Corollary \ref {3.4}, there exists $M \in \mathscr O^V$ such that $A\otimes_BN \iso F(M(\lambda),M)$, as $A^\ltimes-U(\mathfrak g)$ bimodules.  By definition of the $U(\mathfrak g)$ bimodule structure of $A\otimes_BN$ given in \ref {4.1}, one has
$$(A\otimes_BN)^\mathfrak k=A^\mathfrak k\otimes_BN=B\otimes_BN=N. \eqno{(*)}$$

On the other hand $(A\otimes_BN)^\mathfrak k \iso (F(M(\lambda),M))^\mathfrak k = \Hom_{U(\mathfrak g)}(M(\lambda),M)$.   Since $A^\ltimes$ modules in $\mathscr O^V$ have finite length there exists a simple $A^\ltimes$ subquotient $K \in \mathscr O^V$ of $M$ such that $\Hom_{U(\mathfrak g)}(M(\lambda),K)$, is a non-zero $B$ subquotient of $N$, hence equal to $N$ by the simiplicity of the latter. (Since $M(\lambda)$ is projective in $\mathscr O$, there is just one such simple subquotient in a given composition series for $M$.) Finally by Proposition \ref {2.6}, we can write $K=V\otimes L(\mu)$ for some simple highest weight module $L(\mu) \in \mathscr O$.  This proves (i) of Theorem \ref {1.7}.

Assume $N:=\Hom_{U(\mathfrak g)}(M(\lambda),V \otimes L(\mu))$ non-zero and let $N'$ be a simple $B$ quotient. By (i) of Theorem \ref {1.7} we can write $N'=\Hom_{U(\mathfrak g)}(M(\lambda),V \otimes L')$, for some simple highest weight module $L' \in \mathscr O$. Then by Proposition \ref {4.2}, we conclude that
$$\Hom_{A^\ltimes-U(\mathfrak g)}(A \otimes_B N, F(M(\lambda),(V\otimes L' ))\neq 0.$$

Recall Lemmas \ref {5.1} and \ref {3.3} and the above notation. Since tensor product is right exact we obtain $A^\ltimes -U(\mathfrak g)$ modules surjections $F(M(\lambda),V\otimes P(\mu)) \iso A\otimes_B \Hom_{U(\mathfrak g)}(M(\lambda),V\otimes P(\mu))\twoheadrightarrow A\otimes_B N \twoheadrightarrow F(M(\lambda),V\otimes L')$.  On the other hand by Corollary \ref {3.6}, $F(M(\lambda),V\otimes L(\mu)$ is the unique simple $A^\ltimes -U(\mathfrak g)$ module quotient of $F(M(\lambda),V\otimes P(\mu))$.  This forces $F(M(\lambda),V\otimes L(\mu))\iso F(M(\lambda),V\otimes L')$.  Taking $\mathfrak k$ invariants of both sides gives $B$ module isomorphisms $\Hom_{U(\mathfrak g)}(M(\lambda),V\otimes L(\mu))\iso N'$, which proves (ii) of Theorem \ref {1.7}.

Now take $N=\Hom_{U(\mathfrak g)}(M(\lambda), V\otimes L(\mu)), N'=\Hom_{U(\mathfrak g)}(M(\lambda), V\otimes L(\mu')) $, in Proposition \ref {4.2}.  Assume that $N\iso N'$ as $B$ modules.  As in the proof of (ii) we obtain a $A^\ltimes - U(\mathfrak g)$ module surjection of $F(M(\lambda), V \otimes P(\mu))$ onto $F(M(\lambda), V\otimes L(\mu'))$ and then an isomorphism $F(M(\lambda), V \otimes L(\mu))\iso F(M(\lambda), V\otimes L(\mu'))$ of simple $A^\ltimes - U(\mathfrak g)$ bimodules. By Proposition \ref {2.6}, this gives an isomorphism $F(M(\lambda), L(\mu))\iso F(M(\lambda), L(\mu'))$ of simple $U(\mathfrak g)$ bimodules. By Proposition \ref {3.5}(iv), we conclude that $\mu=\mu'$, as required.

\subsection{}\label{5.3}

Let $\{P_i\}$ denote the set of pairwise non-isomorphic indecomposable direct summands of $V^* \otimes M(\lambda)$. Recall that $P_i$ is the projective cover of some simple $L_i \in \mathscr O$. Set $Q_i:=\Hom_{U(\mathfrak g)}(M(\lambda),V\otimes P_i)$. By Frobenius reciprocity (as we have already seen above) the $S_i:=\Hom_{U(\mathfrak g)}(M(\lambda),V\otimes L_i)$
 form the set of non-zero $B_\lambda$ modules.

\begin {lemma}  $Q_i$  is the projective cover of $S_i$.
\end {lemma}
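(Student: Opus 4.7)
The plan is to show that $Q_i$ is a projective $B_\lambda$-module whose head equals $S_i$ with multiplicity one; over the finite-dimensional algebra $B_\lambda$ this forces $Q_i$ to be indecomposable and hence to be the projective cover of $S_i$. Projectivity will be exhibited by realizing $Q_i$ as a direct summand of the regular representation, and the head computation will go through the Frobenius reciprocity of Proposition \ref{4.2} to reduce everything to the known structure of $\mathscr F(V \otimes P_i)$ in $\mathscr H^V_\lambda$ supplied by Corollary \ref{3.6}.

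For projectivity I would invoke the isomorphism $B_\lambda \iso \Hom_{U(\mathfrak g)}(M(\lambda),\, V \otimes (V^* \otimes M(\lambda)))$ of left $B$-modules furnished by Lemma \ref{5.0}. Decomposing $V^* \otimes M(\lambda) \iso \bigoplus_j P_j^{n_j}$ into indecomposable projectives in $\mathscr O$, and using that $V \otimes (\cdot)$ is exact and $\Hom_{U(\mathfrak g)}(M(\lambda),\cdot)$ commutes with finite direct sums, gives
\[
B_\lambda \iso \bigoplus_j Q_j^{n_j}
\]
as left $B_\lambda$-modules. In particular $Q_i$ is a direct summand of the regular representation, hence projective.

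To determine the head of $Q_i$, first note that every simple $B_\lambda$-quotient of $Q_i$ is some $S_j$ by Theorem \ref{1.7}(i). Proposition \ref{4.2}, applied with $M' = V \otimes L_j$ (so $N' = S_j$), together with the identification $A_\lambda \otimes_{B_\lambda} Q_i \iso \mathscr F(V \otimes P_i)$ from Lemma \ref{5.1}, yields
\[
\Hom_B(Q_i, S_j) \iso \Hom_{A^\ltimes - U(\mathfrak g)}(\mathscr F(V \otimes P_i), \mathscr F(V \otimes L_j)).
\]
By Lemma \ref{3.4.2} the module $\mathscr F(V \otimes P_i)$ is projective, and by Corollary \ref{3.6} the simple $\mathscr F(V \otimes L_i)$ is its unique $A^\ltimes - U(\mathfrak g)$-bimodule simple quotient. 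Consequently the right-hand side is nonzero iff $\mathscr F(V \otimes L_j) \iso \mathscr F(V \otimes L_i)$, which by Proposition \ref{3.5}(iv) forces $j = i$; and when $j = i$ its dimension equals $\dim \End \mathscr F(V \otimes L_i) = 1$ (Dixmier's form of Schur's lemma, since $\mathbb C$ is algebraically closed and $\mathscr F(V \otimes L_i)$ is countable-dimensional). Thus the head of $Q_i$ is a single copy of $S_i$.

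Finally, a projective module over the finite-dimensional algebra $B_\lambda$ (finite dimensionality coming from \ref{1.4}) with simple head is necessarily indecomposable and is the projective cover of its head, which identifies $Q_i$ as the projective cover of $S_i$. The main delicate step is the transfer of the projective-cover structure from $\mathscr H^V_\lambda$, where it was established by Corollary \ref{3.6}, back to $B_\lambda$-mod; this is precisely what the adjunction in Proposition \ref{4.2} combined with the identification in Lemma \ref{5.1} accomplishes.
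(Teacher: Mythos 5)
Your proof is correct and follows essentially the same route as the paper: projectivity via $Q_i$ being a direct summand of the regular module $B_\lambda$ (Lemma \ref{5.0}), and identification of the head by transporting $\Hom_B(Q_i,S_j)$ through Proposition \ref{4.2} and Lemma \ref{5.1} to the bimodule side, where Corollary \ref{3.6} and Proposition \ref{3.5} pin down the unique simple quotient. Your explicit multiplicity-one count via Schur's lemma makes the indecomposability step slightly more careful than the paper's phrasing, but the argument is the same.
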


\begin {proof}  Since $Q_i$ is a direct summand of $B_\lambda$ as a left $B_\lambda$ module, it is projective.  We remark that its projectivity may also be verified by the following argument.

Let $R \twoheadrightarrow S$ be a surjection of $B_\lambda$ modules.  Since tensor product is right exact we obtain a surjection $A\otimes_BR \twoheadrightarrow A\otimes_BS$ of $A^\ltimes - U_\lambda$ bimodules.  By Corollary \ref {3.4}, we may write $A\otimes_BR =F(M(\lambda),V\otimes M), A\otimes_BS=F(M(\lambda),V\otimes N)$. Taking invariants and using \ref {5.2}$(*)$ gives
$$R=\Hom_{U(\mathfrak g)}(M(\lambda),V\otimes M),\quad S=\Hom_{U(\mathfrak g)}(M(\lambda),V\otimes N).\eqno {(*)}$$
Now $A\otimes _B Q_i= F(M(\lambda),V\otimes P_i)$ is projective in $\mathscr H^V_\lambda$.  Thus we obtain a surjection $\Hom_{A^\ltimes - U_\lambda}(A\otimes _B Q_i,F(M(\lambda),V\otimes M))\twoheadrightarrow \Hom_{A^\ltimes - U_\lambda}(A\otimes _B Q_i,F(M(\lambda),V\otimes N))$.  This translates via Proposition \ref {4.2} and $(*)$ to a surjection $\Hom_B(Q_i,R)\twoheadrightarrow \Hom_B(Q_i,S)$, as required for projectivity.

Finally suppose that there is a non-zero map of $Q_i$ to the simple module $S_j$. Then by Proposition \ref {4.2}, there is non-zero $A^\ltimes - U_\lambda$ bimodule map of $A\otimes_B Q_i$ onto the simple module $F(M(\lambda), V \otimes L_j)$.  Yet by Lemma \ref {5.1}, the former is just $F(M(\lambda),V \otimes P_i)$, which by Proposition \ref {3.5}(iii) is the projective cover of $F(M(\lambda), V \otimes L_i)$. Moreover $i=j$, as required.
\end {proof}



\section{Non-Vanishing and Dimensions of the Simple $B_\lambda$ Modules}

Most of the statements in this section are well-known.   We repeat details for completeness.

Recall that $\lambda \in \mathfrak h^*$ is assumed dominant.

 Let $V(\nu):\nu \in P$ be the unique simple finite dimensional $U(\mathfrak g)$ module with extremal weight $\nu$, that is to say there exists $w \in W$ such that $w\nu \in P^+$ and $V(\nu)$ has highest weight $w\nu$.

 In the first two sections, let $V$ be an arbitrary finite dimensional $U(\mathfrak g)$ module.

\subsection{}\label{6.1}

\begin {lemma}  $\dim \Hom_{U(\mathfrak g)}(M(\lambda),V\otimes M(\mu))=\dim V_{\lambda-\mu}$.
\end {lemma}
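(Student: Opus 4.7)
The plan is to reduce the Hom computation to a count of weights via a Verma filtration of $V\otimes M(\mu)$. First, I would apply the standard tensor identity $V\otimes M(\mu) \iso U(\mathfrak g)\otimes_{U(\mathfrak b)} (V|_{U(\mathfrak b)}\otimes \mathbb C_\mu)$, an isomorphism of left $U(\mathfrak g)$-modules. Then, by Lie's theorem for the solvable Lie algebra $\mathfrak b$ together with the reductivity of $\mathfrak h$, the finite-dimensional $\mathfrak b$-module $V$ admits a filtration by $\mathfrak b$-submodules whose successive quotients are the one-dimensional weight modules $\mathbb C_{\nu_i}$, with each weight $\nu$ of $V$ appearing with multiplicity $\dim V_\nu$. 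Tensoring with $\mathbb C_\mu$ and applying the exact functor $U(\mathfrak g)\otimes_{U(\mathfrak b)}(\cdot)$ (exact since $U(\mathfrak g)$ is $U(\mathfrak b)$-free by PBW) transports this to a filtration of $V\otimes M(\mu)$ by $U(\mathfrak g)$-submodules whose successive quotients are the Verma modules $M(\mu+\nu_i)$.

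The second step exploits the dominance of $\lambda$: by \ref{2.1}, $M(\lambda)$ is projective in $\mathscr O$, so $\Hom_{U(\mathfrak g)}(M(\lambda),\cdot)$ is exact. Applied to the filtration above, this gives $\dim \Hom_{U(\mathfrak g)}(M(\lambda), V\otimes M(\mu)) = \sum_i \dim \Hom_{U(\mathfrak g)}(M(\lambda), M(\mu+\nu_i))$.

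The main obstacle is then to establish that $\dim \Hom_{U(\mathfrak g)}(M(\lambda), M(\mu'))=\delta_{\lambda,\mu'}$ for any $\mu' \in \mathfrak h^*$, given the dominance of $\lambda$. Any nonzero map between Verma modules is injective and the Hom space has dimension at most one, so I only need to rule out an embedding when $\mu' \neq \lambda$. An embedding would force $\mu'-\lambda \in \mathbb N\pi$ (by weight support) and $\mu' \in W.\lambda$ (same central character), hence $\mu'-\lambda \in \mathbb Z\pi$, which by definition of $W_\lambda$ means $\mu' = w.\lambda$ for some $w \in W_\lambda$. The dominance hypothesis $\alpha^\vee(\lambda+\rho)\in \mathbb N$ for all $\alpha \in \Delta^+_\lambda$, combined with the standard fact that $(\lambda+\rho)-w(\lambda+\rho) \in \mathbb N\pi_\lambda \subset \mathbb N\pi$ for $w \in W_\lambda$, then gives $\lambda-\mu'=\lambda-w.\lambda \in \mathbb N\pi$, which together with $\mu'-\lambda \in \mathbb N\pi$ forces $\mu'=\lambda$. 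Substituting back into the sum leaves only indices $i$ with $\nu_i=\lambda-\mu$, giving $\#\{i:\nu_i=\lambda-\mu\}=\dim V_{\lambda-\mu}$, as desired.
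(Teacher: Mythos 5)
Your proof is correct and follows the same route as the paper: produce a Verma flag of $V\otimes M(\mu)$ in which $M(\nu)$ occurs $\dim V_{\nu-\mu}$ times, then use the projectivity of $M(\lambda)$ (for $\lambda$ dominant) together with $\dim\Hom_{U(\mathfrak g)}(M(\lambda),M(\nu))=\delta_{\lambda,\nu}$. The only difference is that the paper cites these two ingredients (to \cite[8.1.7]{J2} and \cite[Prop. 8.2.2(i)]{J2}) whereas you prove them via the tensor identity and the dominance argument; both of your supplied arguments are sound.
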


\begin {proof}  As noted in say  \cite [8.1.7]{J2}), $V\otimes M(\mu)$ admits a Verma flag with $M(\nu)$ occurring $\dim V_{\nu - \mu}$ times. Since $M(\lambda)$ is projective in $\mathscr O$ one has $\dim \Hom_{U(\mathfrak g)}(M(\lambda),M(\nu))\leq 1$, with equality if and only if $\lambda=\nu$ (for example see \cite [Prop. 8.2.2(i)]{J2}).  Then through the above flag, again using projectivity, the assertion of the lemma obtains.
\end {proof}

\

\textbf{Remark.} As far as we know this observation (using projectivity) was first made in \cite {BG}.  An extension of this result which the above authors missed (and originally thought to be untrue) is given in \cite [Prop. 3.4]{GJ}, being partly derived from earlier work of the present author (see references in loc. cit.).

\subsection{}\label{6.2}

Define an order relation on $\mathfrak h^*$ by $\mu \geq \nu$ given $\mu-\nu \in P^+$. Recall \ref {1.5}, the translated action $(w,\lambda)\mapsto w.\lambda:=w(\lambda+\rho)-\rho$ of $W$ on $\mathfrak h^*$.  We call $\mu \in \lambda +P$ minimal if it is the unique minimal element in its $\stab_{W_\lambda.}\lambda$ orbit, that is to say minimal in the orbit given by the translated action $\{w \in W_\lambda|w.\lambda=\lambda\}$.  Let $( \ , \ )$ be the Cartan inner product on $\mathfrak h^*$.  Given $M \in \mathscr O$, let $[M]$ be its representative in the Grothendieck group.

 \begin {lemma}  $\mathscr FL(\mu)\neq 0 \Leftrightarrow \mu \in \lambda +P$ and is minimal.
 \end {lemma}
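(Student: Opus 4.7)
The plan is to invoke Lemma \ref{3.4.1}, which reduces the claim to characterizing when the projective cover $P(\mu)$ of $L(\mu)$ appears as a direct summand of $S^* \otimes M(\lambda)$ for some finite dimensional simple $U(\mathfrak g)$ module $S$. This converts the question into one of translation functor theory in category $\mathscr O$.

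For the direction $\mathscr F L(\mu) \neq 0 \Rightarrow \mu \in \lambda + P$ and minimal, I would first note that $P(\mu)$ appearing as a summand of some $S^* \otimes M(\lambda)$ forces $\mu \in \lambda + P$ by weight inspection, and then compare central characters (using the Verma flag of $S^* \otimes M(\lambda)$ with pieces $M(\lambda + \nu)$ for $\nu$ a weight of $S^*$, together with the fact that $P(\mu)$ lies in the central character block $\chi_\mu$) to force $\mu \in W_\lambda \cdot \lambda$. For minimality, I would argue by contradiction: if $s = s_\alpha \in \stab_{W_\lambda.}\lambda$ satisfies $s \cdot \mu < \mu$ (equivalently $\alpha^\vee(\lambda + \rho) = 0$ and $\alpha^\vee(\mu + \rho) \in \mathbb N^+$), the BGG embedding $M(s \cdot \mu) \hookrightarrow M(\mu)$ combined with BGG reciprocity $[P(\mu):M(\nu)] = [M(\nu):L(\mu)]$ shows that $M(s \cdot \mu)$ does not appear in the Verma flag of $P(\mu)$ (since $s \cdot \mu < \mu$ gives $[M(s \cdot \mu):L(\mu)] = 0$), yet its occurrence is forced by the Verma flag structure of $S^* \otimes M(\lambda)$ in the $\chi_\mu$-block, yielding a contradiction.

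For the converse, given $\mu \in \lambda + P$ minimal, I would take $S = V(\lambda - \mu)$, the finite dimensional simple module of extremal weight $\lambda - \mu$, so that $S^* \otimes M(\lambda)$ admits a Verma flag containing $M(\mu)$. Minimality of $\mu$ ensures that $M(\mu)$ sits at the bottom of the $\chi_\mu$-block Verma flag, and standard BGG reciprocity would then force $P(\mu)$ to occur as a direct summand of the projective module $S^* \otimes M(\lambda)$, yielding $\mathscr F L(\mu) \neq 0$ by Lemma \ref{3.4.1}.

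The main obstacle will be the Verma flag bookkeeping in the $\chi_\mu$-block of $S^* \otimes M(\lambda)$: precisely identifying its indecomposable summands in terms of the dot-orbit structure of $W_\lambda / \stab_{W_\lambda.}\lambda$ when $\mu$ is (respectively is not) minimal. This is the substance of translation functor theory in category $\mathscr O$, and I would follow the treatment in \cite [8.4.1]{J2}.
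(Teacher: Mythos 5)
Your reduction via Lemma \ref{3.4.1} is a legitimate alternative starting point, but both halves of the argument have gaps at exactly the places where the real work lies. In the direction $\Rightarrow$, the claimed contradiction is a non sequitur: the Verma flag of $S^*\otimes M(\lambda)$ forces $M(s.\mu)$ to occur in the flag of the $\chi_\mu$-block component with multiplicity $\dim S^*_{s.\mu-\lambda}$, but nothing forces it to occur in the flag of the particular summand $P(\mu)$ --- it could sit entirely inside the other indecomposable summands. To make this work you must compare total multiplicities: writing $n_\nu=[S^*\otimes M(\lambda):P(\nu)]$, the identities $\sum_\nu n_\nu[P(\nu):M(\tau)]=\dim S^*_{\tau-\lambda}$ for $\tau=\mu$ and $\tau=s.\mu$, together with $\dim S^*_{s.\mu-\lambda}=\dim S^*_{\mu-\lambda}$ (this uses $s\in\stab_{W_\lambda.}\lambda$, so $s.\mu-\lambda=s(\mu-\lambda)$) and $[P(\nu):M(\mu)]-[P(\nu):M(s.\mu)]=[M(\mu)/M(s.\mu):L(\nu)]\geq 0$, force $n_\mu\cdot[M(\mu)/M(s.\mu):L(\mu)]=n_\mu=0$. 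Note that this repaired argument is essentially the paper's own computation in disguise: the paper applies $\Hom(M(\lambda),V\otimes-)$ to $0\to M(s_\alpha.\mu)\to M(\mu)\to M(\mu)/M(s_\alpha.\mu)\to 0$ and uses $\dim V_{\lambda-\mu}=\dim V_{s_\alpha(\lambda-\mu)}=\dim V_{\lambda-s_\alpha.\mu}$ to conclude $\mathscr F(M(\mu)/M(s_\alpha.\mu))=0$ directly.

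The gap in $\Leftarrow$ is more serious. The assertion that ``minimality of $\mu$ ensures that $M(\mu)$ sits at the bottom of the $\chi_\mu$-block Verma flag'' conflates two notions of minimality. The hypothesis is only that $\mu$ is minimal in its $\stab_{W_\lambda.}\lambda$-orbit (for $\lambda$ regular this is no condition at all), whereas your claim requires that no $y.\mu<\mu$ occurs among the weights $\lambda+\Omega(S^*)$. Bridging these requires the quantitative step that the paper supplies: for $y.\mu<\mu$ with $\lambda-y.\mu\in\Omega(V(\lambda-\mu))$ one has $\|\lambda-y.\mu\|\leq\|\lambda-\mu\|$ (extremal weights maximize the norm), while $(\lambda-y.\mu,\lambda-y.\mu)-(\lambda-\mu,\lambda-\mu)$ is controlled by $(\lambda+\rho,\mu-y.\mu)$, and minimality is exactly what rules out the degenerate case where this vanishes --- this is the inequality $(\lambda-s_\gamma.\mu,\lambda-s_\gamma.\mu)>(\lambda-\mu,\lambda-\mu)$ in the paper's proof. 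You have deferred precisely this step to ``Verma flag bookkeeping.'' The paper instead avoids the flag of $S^*\otimes M(\lambda)$ altogether and works inside $M(\mu)$: the Jantzen sum formula $\sum_i[M^i(\mu)]=\sum_{\gamma\in\mathscr S_\mu}[M(s_\gamma.\mu)]$ shows the minimal $\mathfrak k$-type $V(\lambda-\mu)$ of $F(M(\lambda),M(\mu))$ cannot occur in $F(M(\lambda),M^1(\mu))$, hence survives in $F(M(\lambda),L(\mu))$. Some such input (Jantzen filtration or an equivalent control of the maximal submodule of $M(\mu)$) is unavoidable, and your sketch does not yet contain it. Finally, once $M(\mu)$ is realized as a quotient of the block component, the summand $P(\mu)$ follows immediately by composing with $M(\mu)\twoheadrightarrow L(\mu)$; BGG reciprocity plays no role at that point.
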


 \begin {proof}
 For $\Rightarrow$ suppose that $\mu$ is not minimal.
 Then there exists $s_\alpha \in \stab_{W_\lambda.}\lambda$, such that $\mu - s_\alpha.\mu$ is a positive (and necessarily integer) multiple of $\alpha$.  Then (see \cite [8.2.1]{J2}, for example) $M(s_\alpha.\mu)$ is a proper submodule of $M(\mu)$ and so by the projectivity of $M(\lambda)$, we obtain from $(*)$ that
 $ \dim \Hom_{U(\mathfrak g)}(M(\lambda),V\otimes M(\mu)/M(s_\alpha.\mu))=\dim V_{\lambda-\mu}-\dim V_{\lambda-s_\alpha.\mu} =\dim V_{\lambda-\mu}-\dim V_{s_\alpha(\lambda-\mu)}=0$. Since $V$ is arbitrary, $\mathscr F(M(\mu)/M(s_\alpha.\mu))=0$.  Since $\mathscr F$ is exact, this gives $\mathscr FL(\mu)=0$, as required.

 For $\Leftarrow$, set $\mathscr S_\mu:= \{\gamma \in \Delta_\lambda \cap \Delta^+|s_\gamma.\mu < \mu\}$. Take $\gamma \in \mathscr S_\mu$. By the minimality of $\mu$ we obtain $(\lambda +\rho,\gamma) >0$.  One checks that $(\lambda-s_\gamma.\mu,\lambda-s_\gamma.\mu) =(\lambda-\mu,\lambda-\mu)+2\gamma^\vee(\mu +\rho)(\lambda+\rho,\gamma)>(\lambda-\mu,\lambda-\mu)$. By \ref {6.1}$(*)$, this means that the ``minimal $\mathfrak k$-type" occurring in $F(M(\lambda),M(\mu))$, namely $V(\lambda-\mu)$ cannot occur in $F(M(\lambda),M(s_\gamma.\mu))$.

 Let $M(\mu)\supset M^1(\mu)\supset M^2(\mu)\supset\ldots,$ be the Jantzen filtration of $M(\mu)$ and recall that $L(\mu)=M(\mu)/M^1(\mu)$. The Jantzen sum formula (\cite [Satz 5.3] {Ja}, or \cite [4.4.17]{J2}) gives
$$\sum_{i\in \mathbb N^+}[M^i(\mu)]= \sum_{\gamma \in \mathscr S_\mu}[M(s_\gamma.\mu)].$$

From this formula and the projectivity of $M(\lambda)$, it follows from the previous observation that $V(\lambda-\mu)$ cannot occur in $F(M(\lambda),M^1(\mu))$.  Hence $F(M(\lambda),L(\mu))\neq 0$, as required.

 \end {proof}

\textbf{Remark.}  This result is contained in \cite [5.4]{BG}.  It is related to results of Zhelobenko as carefully worked out in \cite [Sect. I, 4.1, 4.7]{Du}.

 \subsection{}\label{6.3}

 \begin {cor}  Suppose $S(\mu)\neq 0$, then $V_{\lambda - \mu}\neq 0$ and $\mu$ is minimal.
 \end {cor}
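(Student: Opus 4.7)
The plan is to extract both conclusions from $S(\mu)\neq 0$ by interleaving Lemma \ref{6.1}, Lemma \ref{6.2}, and the isomorphism $V\otimes F(M,N)\iso F(M,V\otimes N)$ of \ref{3.1}$(*)$.

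For the first assertion, I would exploit the projectivity of $M(\lambda)$ in $\mathscr O$, which holds because $\lambda$ is dominant. Tensoring the canonical surjection $M(\mu)\twoheadrightarrow L(\mu)$ with $V$ yields a surjection $V\otimes M(\mu)\twoheadrightarrow V\otimes L(\mu)$ inside $\mathscr O$, and applying the (by projectivity) exact functor $\Hom_{U(\mathfrak g)}(M(\lambda),\cdot)$ produces a surjection
$$\Hom_{U(\mathfrak g)}(M(\lambda),V\otimes M(\mu))\twoheadrightarrow S(\mu).$$
Consequently $S(\mu)\neq 0$ forces the left-hand side to be non-zero, and Lemma \ref{6.1} identifies its dimension with $\dim V_{\lambda-\mu}$. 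Hence $V_{\lambda-\mu}\neq 0$; in particular $\lambda-\mu$ is a weight of $V$, so $\mu\in\lambda+P$ (which is needed for ``minimal'' to make sense).

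For the minimality, I would observe that the space $S(\mu)$ of $\mathfrak k$-invariants is automatically $\mathfrak k$-locally finite, so it is contained in $F(M(\lambda),V\otimes L(\mu))$; hence $S(\mu)\neq 0$ gives $F(M(\lambda),V\otimes L(\mu))\neq 0$. The isomorphism of \ref{3.1}$(*)$ now produces an identification
$$V\otimes F(M(\lambda),L(\mu)) \iso F(M(\lambda),V\otimes L(\mu)),$$
so $V\otimes\mathscr F L(\mu)\neq 0$. Since $V\neq 0$ (otherwise $S(\mu)=0$), this forces $\mathscr F L(\mu)\neq 0$. Lemma \ref{6.2} then identifies this non-vanishing with the condition that $\mu$ is minimal in its $\stab_{W_\lambda.}\lambda$-orbit.

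The argument is essentially a bookkeeping combination of results already in place, so I do not anticipate a genuine obstacle. The one small subtlety to keep in mind is that one must invoke the isomorphism of \ref{3.1}$(*)$ rather than a naive $\Hom$-formula, since the passage from $\Hom(M(\lambda),V\otimes L(\mu))$ to a tensor product factoring through $V$ only holds within the $\mathfrak k$-finite part; this is precisely why one restricts to $F(M(\lambda),\cdot)$ before invoking Lemma \ref{6.2}.
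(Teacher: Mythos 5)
Your proposal is correct and is essentially the argument the paper intends (the corollary is stated without proof, as an immediate consequence of Lemmas \ref{6.1} and \ref{6.2}): the surjection $\Hom_{U(\mathfrak g)}(M(\lambda),V\otimes M(\mu))\twoheadrightarrow S(\mu)$ coming from the projectivity of $M(\lambda)$ gives $V_{\lambda-\mu}\neq 0$ via Lemma \ref{6.1}, and the inclusion $S(\mu)\subseteq F(M(\lambda),V\otimes L(\mu))\iso V\otimes\mathscr FL(\mu)$ gives minimality via Lemma \ref{6.2}. Your remark about needing \ref{3.1}$(*)$ on the $\mathfrak k$-finite part rather than a naive $\Hom$-formula is exactly the right point of care.
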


 \

 \textbf{Remark.} Unlike Lemma \ref {6.2} it is not obvious if the converse holds.

\subsection{}\label{6.4}

By \ref {6.1}$(*)$ we obtain.

\begin {lemma}  For all $\lambda \in \mathfrak h^*$ one has $\dim B_\lambda = \dim (\End V)_0$.
\end {lemma}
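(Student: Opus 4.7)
The plan is to combine the two previous results directly. By the Lemma in \ref{5.0}, we have a $B$-module (hence vector space) isomorphism
\[
B_\lambda \cong \Hom_{U(\mathfrak g)}\bigl(M(\lambda),\, V \otimes V^* \otimes M(\lambda)\bigr),
\]
using the natural identification $\End V \cong V \otimes V^*$ of $U(\mathfrak g)$-modules under the adjoint action. Thus I only need to compute the dimension of the right-hand side.

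For this, apply Lemma \ref{6.1} with the module $V$ in that lemma replaced by $V \otimes V^* \cong \End V$ (which is finite dimensional) and with $\mu = \lambda$. The lemma yields
\[
\dim \Hom_{U(\mathfrak g)}\bigl(M(\lambda),\, \End V \otimes M(\lambda)\bigr) = \dim (\End V)_{\lambda - \lambda} = \dim (\End V)_0.
\]

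Putting these together gives $\dim B_\lambda = \dim (\End V)_0$, as claimed. The essential point is that the adjoint action of $\mathfrak h$ on $\End V$ has a zero weight space whose dimension is computable independently of $\lambda$, so this also shows that $\dim B_\lambda$ is the same for every dominant $\lambda$. There is no genuine obstacle here: once one accepts the rewriting of $B_\lambda$ afforded by \ref{5.0} and the Verma-flag-plus-projectivity computation of \ref{6.1}, the statement is immediate.
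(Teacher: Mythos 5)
Your proof is correct and is exactly the argument the paper intends: the paper's entire proof reads ``By \ref{6.1}$(*)$ we obtain,'' which, unpacked, is precisely your combination of the identification $B_\lambda \cong \Hom_{U(\mathfrak g)}(M(\lambda), V\otimes V^*\otimes M(\lambda))$ from \ref{5.0} with Lemma \ref{6.1} applied to the finite dimensional module $\End V \cong V\otimes V^*$ at $\mu=\lambda$. No differences worth noting.
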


\subsection{}\label{6.5}

Recall the discussion in the beginning of Section \ref {3.5} and let $[M(\lambda)\otimes V^*:P(\mu)]$ denote the number of direct summands of $M(\lambda)\otimes V^*$ isomorphic to $P(\mu)$.  Since $\dim \Hom_{U(\mathfrak g)}(P(\mu),L(\nu))\leq 1$ with equality if and only if $\nu =\mu$, we obtain the

\begin {lemma}  $\dim S(\mu)=[M(\lambda)\otimes V^*:P(\mu)]$.
\end {lemma}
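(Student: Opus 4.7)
The plan is to rewrite $S(\mu)$ via the standard tensor-hom adjunction and then exploit that $M(\lambda)\otimes V^*$ is projective in $\mathscr O$.

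First, since $V$ is finite dimensional one has a canonical $U(\mathfrak g)$-module isomorphism $V\otimes L(\mu)\cong \Hom(V^*,L(\mu))$, and hence Frobenius reciprocity gives
$$S(\mu)=\Hom_{U(\mathfrak g)}(M(\lambda),V\otimes L(\mu))\;\cong\;\Hom_{U(\mathfrak g)}(M(\lambda)\otimes V^*,L(\mu)).$$
Next, as $\lambda$ is dominant $M(\lambda)$ is projective in $\mathscr O$, and tensoring with the finite dimensional module $V^*$ preserves projectivity (see \ref{2.1}). Hence $M(\lambda)\otimes V^*$ is projective and decomposes as a direct sum of indecomposable projectives, and by definition each $P(\nu)$ appears with multiplicity $[M(\lambda)\otimes V^*:P(\nu)]$.

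Combining the above with the fact recalled in the statement, namely $\dim\Hom_{U(\mathfrak g)}(P(\nu),L(\mu))=\delta_{\nu,\mu}$, one obtains
$$\dim S(\mu)=\sum_{\nu}[M(\lambda)\otimes V^*:P(\nu)]\dim \Hom_{U(\mathfrak g)}(P(\nu),L(\mu))=[M(\lambda)\otimes V^*:P(\mu)],$$
as required.

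There is no real obstacle here: the argument is essentially bookkeeping via Frobenius reciprocity together with the projective-cover property of $P(\mu)$. The only point requiring minor care is the placement of $V$ versus $V^*$ in the adjunction, which is why the indecomposable summands of $M(\lambda)\otimes V^*$ (rather than $M(\lambda)\otimes V$) are the ones that enter the formula.
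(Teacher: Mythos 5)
Your proof is correct and is essentially the argument the paper intends: the adjunction $\Hom_{U(\mathfrak g)}(M(\lambda),V\otimes L(\mu))\cong\Hom_{U(\mathfrak g)}(M(\lambda)\otimes V^*,L(\mu))$, the decomposition of the projective module $M(\lambda)\otimes V^*$ into indecomposable projective covers, and the fact that $\dim\Hom_{U(\mathfrak g)}(P(\nu),L(\mu))=\delta_{\nu,\mu}$ (which is exactly the observation the paper states just before the lemma). Your remark about keeping track of $V$ versus $V^*$ matches the paper's own convention of working with summands of $V^*\otimes M(\lambda)$ from Section \ref{3.5} onward.
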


\textbf{Remark.}   Combined with Lemma \ref {5.1}, recalling the definition of $Q_i$ given in \ref {5.3}, we obtain
$$B_\lambda \iso \oplus_i Q_i^{\dim S_i}. \eqno {(*)}$$

At first it seemed to the author that this was a surprising result; but it turns out to be a general fact (as I learnt from correspondence with J.T. Stafford and L. W. Small) for a finite dimensional algebra $A$ over (an algebraically closed field $\textbf{k}$).  Thus the lecture notes of R. Vale assert \cite [Thm. 4.14]{V} that every simple $A$ module has a projective cover and $A$ is a direct sum of its indecomposable projectives.  Earlier and in more generality this result is proved in \cite [Chap. I, Cor. 4.5]{ARS} for Artinian rings.  Neither reference describes their multiplicities (which seemed to me to be the surprising fact) but these can be recovered by an argument of Stafford which he kindly allowed me to reproduce below.

Let us recall some facts which can be found in \cite [Chap. I]{ARS} or in \cite [Chaps. 2,4]{H}. Let $A$ be an Artinian ring and $\{S_i\}_{i=1}^n$ its set of non-isomorphic simple modules.  Let $J(A)$ be the Jacobson radical of $A$ (just called the radical in \cite [Chap. I, Sect. 3]{ARS}).  Then $A/J(A)$ is a semisimple Artinian ring.  By definition of $J(A)$, every $S_i$ factors to a simple $A/J(A)$ module.   Moreover $D_i=\End_A S_i$ is a division algebra over which $S_i$ has some finite dimension $s_i$.  By Wedderburn's theorem $A/Ann_AS_i$ is a matrix algebra of size $s_i$ over $D_i$. Finally A/J(A) is a finite direct sum of these simple Artinian rings.   In particular as left $A$ modules
$$A/J(A) \iso \oplus_{i=1}^n S_i^{s_i}.$$

Of course if $A$ is a $\textbf{k}$ algebra with $\textbf{k}$ algebraically closed, then each $D_i$ is just $\textbf{k}$.

Let $P_i$ denote the projective cover of $S_i$.

 \begin {lemma} One has an isomorphism $A\iso \oplus_{i=1}^n P_i^{s_i}$ of left $A$ modules.

 \end {lemma}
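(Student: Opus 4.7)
The plan is to use the decomposition of $A/J(A)$ already stated in the preamble and to compare it with the decomposition of $A$ into indecomposable projectives modulo the radical. Let me sketch the three steps.

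First, I would invoke the fact (cited from \cite{ARS} or \cite{V} just before the statement) that every simple $A$-module admits a projective cover and that $A$, viewed as a left module over itself, decomposes as a finite direct sum of indecomposable projectives. Since every indecomposable projective is the projective cover of some simple, and since any two projective covers of isomorphic simples are isomorphic, we may write
\[ A \iso \oplus_{i=1}^n P_i^{n_i} \]
as left $A$-modules, for some non-negative integers $n_i$ to be determined.

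Second, I would quotient both sides by $J(A)$. The key property of the projective cover is that $P_i/J(A)P_i\iso S_i$ (with $J(A)P_i$ being the unique maximal submodule of $P_i$). Therefore
\[ A/J(A) \iso \oplus_{i=1}^n (P_i/J(A)P_i)^{n_i} \iso \oplus_{i=1}^n S_i^{n_i}. \]

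Third, I would compare this with the decomposition $A/J(A)\iso \oplus_{i=1}^n S_i^{s_i}$ recalled just before the lemma statement. Since the $S_i$ are pairwise non-isomorphic simples over the semisimple Artinian ring $A/J(A)$, the Krull--Schmidt theorem (or the uniqueness of composition factor multiplicities) forces $n_i=s_i$ for each $i$, giving the claimed isomorphism.

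The only delicate point is the justification that $P_i/J(A)P_i\iso S_i$; this is immediate from the characterization of a projective cover as a projective module with a superfluous (essential) kernel mapping onto $S_i$, combined with Nakayama's lemma applied to the Artinian setting, which ensures $J(A)P_i$ is the unique maximal submodule. Everything else is bookkeeping via Krull--Schmidt on the finite-length module $A/J(A)$.
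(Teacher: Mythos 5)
Your proof is correct, but it runs in the opposite direction from the argument the paper reproduces (due to Stafford). You start from the external decomposition theorem of \cite{ARS}/\cite{V} --- $A$ is a finite direct sum of indecomposable projectives, each of which is the projective cover of its head --- and then pin down the unknown multiplicities $n_i$ by reducing modulo $J(A)$ and invoking uniqueness of multiplicities in the semisimple module $A/J(A)$; the one point needing care, $P_i/J(A)P_i\iso S_i$, you handle correctly via the superfluity of the kernel of $P_i\twoheadrightarrow S_i$. The paper instead constructs the isomorphism directly: it sets $P=\oplus_{i=1}^n P_i^{s_i}$, lifts the canonical surjection $A\twoheadrightarrow A/J(A)$ through the surjection $P\twoheadrightarrow A/J(A)$ to a map $\varphi:A\rightarrow P$, shows $\varphi$ is surjective by Nakayama, splits it using projectivity of $P$ to get $A\iso P\oplus\Ker\varphi$, and kills the complement by another application of Nakayama. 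The trade-off: your route leans on the cited decomposition theorem as a black box (which is fair, since the paper quotes it immediately before the lemma and remarks that only the multiplicities are missing from the references), whereas the paper's argument is self-contained and in effect reproves that decomposition theorem en route. Both are complete proofs; yours is the more standard bookkeeping argument, the paper's the slicker direct construction.
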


 \begin {proof}
 Set $P=\oplus_{i=1}^n P_i^{s_i}$ which is projective and surjects to $A/J(A)$. Since $A$ is itself a projective left $A$ module, this factors to a map $\varphi:A \rightarrow P$.  If $\varphi$ is not surjective, then its image must be contained in a maximal submodule $M$.  By definition of $J(A)$ one has $M \supset J(A)$ and so the image of $M$ in $A/J(A)$ is the proper submodule $M/J(A)$, which is a contradiction.  (In brief, $\varphi$ is surjective by Nakayama's lemma.)  Finally since $P$ is projective, $\varphi$ admits a section $\sigma$ giving an isomorphism $A\iso \im \sigma \oplus\ker \varphi=P\oplus N$, with $N$ the left $A$ module $\ker \varphi$.  Yet if $N\neq 0$, it would have a simple factor which is not already a factor of $P$, contradicting the choice of $P$.  Hence the assertion of the lemma.

 \end {proof}

\subsection{}\label{6.6}

Recall \ref {1.5}.  Suppose $\Lambda$ is in general position. Then $M(\mu)=L(\mu)=P(\mu)$, for all $\mu \in \Lambda$.

\begin {lemma}  Suppose that $\Lambda$ is in general position.  Then $B_\lambda$ is isomorphic to $(\End V)_0$ as an algebra.
\end {lemma}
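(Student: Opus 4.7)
The strategy is to identify both algebras as semisimple $\mathbb C$-algebras with matching Wedderburn decompositions, and in particular to leverage the fact that in general position the indecomposable projectives in the block of $\lambda$ are all Verma modules.

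First, since $\Lambda$ is in general position, for every $\mu \in \Lambda$ one has $M(\mu) = L(\mu) = P(\mu)$. In particular, every Verma module in the block of $\mathscr O$ containing $\lambda$ is simultaneously simple and projective. Next, $V^* \otimes M(\lambda)$ is projective in $\mathscr O$ and, by the standard tensor-product Verma flag underlying Lemma \ref{6.1}, admits a filtration whose sections are the $M(\mu): \mu \in \Lambda$, with $M(\mu)$ appearing $\dim V_{\lambda-\mu}$ times. Since each $M(\mu) = P(\mu)$ is projective, this flag splits into a direct sum
$$V^* \otimes M(\lambda) \;\cong\; \bigoplus_{\mu \in \Lambda} M(\mu)^{\dim V_{\lambda-\mu}}.$$
Thus, in the notation of Section \ref{5.3}, the indecomposable direct summands $P_i$ of $V^* \otimes M(\lambda)$ are precisely the $M(\mu)$ with $V_{\lambda-\mu} \neq 0$.

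It follows that $Q(\mu) := \Hom_{U(\mathfrak g)}(M(\lambda), V\otimes P(\mu)) = \Hom_{U(\mathfrak g)}(M(\lambda), V\otimes L(\mu)) = S(\mu)$, so every indecomposable projective $B_\lambda$-module coincides with its simple head. By Lemma \ref{5.3} this forces every simple $B_\lambda$-module to be projective, so $B_\lambda$ is a semisimple $\mathbb C$-algebra. Combining Lemma \ref{6.1} (whence $\dim S(\mu) = \dim V_{\lambda-\mu}$) with the decomposition \ref{6.5}$(*)$ and Wedderburn's theorem gives
$$B_\lambda \;\cong\; \bigoplus_{\mu \in \Lambda,\; V_{\lambda-\mu}\neq 0} \End(S(\mu)) \;\cong\; \bigoplus_{\nu \in P} \End(V_\nu),$$
after the reindexing $\nu = \lambda - \mu$.

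On the other hand, under the identification $\End V = V\otimes V^*$ the adjoint action of $\mathfrak h$ is diagonal with $\End(V_\mu, V_\nu)$ having weight $\nu - \mu$; hence a weight-zero endomorphism is exactly one that preserves each weight space, giving the algebra decomposition $(\End V)_0 \cong \bigoplus_{\nu \in P} \End(V_\nu)$. This matches the Wedderburn decomposition of $B_\lambda$ term by term, yielding the asserted algebra isomorphism (and, in passing, recovering the dimension equality of Lemma \ref{6.4}). The one step that genuinely uses the general-position hypothesis is the splitting of the Verma flag of $V^*\otimes M(\lambda)$; once this is granted, the semisimplicity of $B_\lambda$ and the matching of Wedderburn data are formal.
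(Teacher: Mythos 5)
Your proof is correct, but it reaches the conclusion by a different mechanism than the paper. The paper's proof is a two-line pincer: Lemma \ref{6.1} gives $\dim S(\mu)=\dim V_{\lambda-\mu}$; Theorem \ref{1.7} plus the Jacobson density theorem shows $B_\lambda$ surjects onto $\bigoplus_{\mu}\End V_{\lambda-\mu}$; and the dimension count $\dim B_\lambda=\dim(\End V)_0$ of Lemma \ref{6.4} forces this surjection to be an isomorphism. You instead establish semisimplicity of $B_\lambda$ structurally: in general position the Verma flag of $V^*\otimes M(\lambda)$ splits into projective simple Vermas, so each indecomposable projective $Q_i$ coincides with its simple head $S_i$, whence $B_\lambda\cong\bigoplus_i S_i^{\dim S_i}$ by \ref{6.5}$(*)$ and Wedderburn finishes the job. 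Both routes are legitimate and use the same inputs at the two ends (Lemma \ref{6.1} and the block decomposition $(\End V)_0\cong\bigoplus_\nu\End V_\nu$); the paper's middle step is shorter and does not need the projective-cover machinery, while yours explains \emph{why} $B_\lambda$ is semisimple (its projectives are simple), recovers Lemma \ref{6.4} as output rather than needing it as input, and matches the paper's own Remark 1 after the lemma, which observes that the splitting of $V\otimes M(\lambda)$ into simple Vermas yields the result by direct computation. One cosmetic point: your final direct sum should run over the weights $\nu$ of $V$ (equivalently over $\mu\in\Lambda$ with $V_{\lambda-\mu}\neq 0$), not over all of $P$.
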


\begin {proof}  By Lemma \ref {6.1}, we obtain $\dim S(\mu)= \dim V_{\lambda - \mu}$.  By Theorem \ref {1.7} and the Jacobson density theorem, it follows that $B_\lambda$ contains the direct sum $\oplus_{\mu \in \Lambda} \End V_{\lambda - \mu}$.  Since the right hand side has dimension $\dim (\End V)_0$, the assertion follows from Lemma \ref {6.5}.
\end {proof}

\textbf{Remark 1.}  Since $\lambda$ is in general position there is a direct sum decomposition of $V\otimes M(\lambda)$ into pairwise non-isomorphic simple Verma modules.  From this the conclusion of the lemma obtains by direct computation.   Note the conclusion in Remark \ref {6.5} holds trivially in this case.

\textbf{Remark 2.}  Let $H$ be the algebra of functions on an affine supergroup whose zero graded part is reductive, that is to say $H$ is the Hopf dual of $U(\mathfrak g)$ where $\mathfrak g$ is a finite dimensional $\mathbb Z_2$ graded Lie algebra with $\mathfrak g_0$ reductive. Serganova \cite [Sect. 9]{S} obtained a result seemingly very close to that described in Remark \ref {6.5}.  Let $\{S_i\}_{i\in I}$ denote the set of simple modules and $P_i$ the projective cover of $S_i$ in the category of finite dimensional $U(\mathfrak g)$ modules.  Then
$$H=\oplus_{i \in I} P_i^{\dim S_i}.\eqno{(*)}$$
This is obtained by combining \cite [Thm. 9.2]{S} which makes an analogous statement for injective hulls with \cite [Lemma 9.5]{S} which shows that there is a distinguished one dimensional module $L$ such that the injective hull of $S\otimes L$ is the projective cover of $S$.   When $\mathfrak g= \mathfrak g_0$, $(*)$ is a familiar consequence of the algebraic Peter-Weyl theorem \cite [1.4.13]{J2}.

 The above result of Serganova is really quite different to that of the Lemma of Remark \ref {6.5}; the $S_i$ and $P_i$ are comodules for $H$ and the sum is infinite.  Relating it to the latter \ref {6.5}, if at all possible, will need further work.


\subsection{}\label{6.6.1}

At the other extreme suppose $\lambda+\rho =0$.  Then as is well-known $V(\nu)^*\otimes M(\lambda)$ is a direct sum of projective-injective modules $P(\Sigma)$ one for every $W$ orbit $\Sigma$ in the multi-set of weights of $V(\nu)^*$.  Moreover $P(\Sigma)$ is the projective cover of $L(\sigma-\rho)$, where $\sigma$ is the unique element in $\Sigma\cap -P^+$.  One may also remark that $[P(\Sigma)]=\sum_{w\in W/\stab_W\sigma}[M(w.(\sigma-\rho)]$, see \cite{CI}, for example. Thus by Lemma \ref {6.5}, we obtain the

\begin {cor} Suppose $\lambda= - \rho$.  Then $S(\sigma-\rho) \neq 0$ if and only if $-\sigma$ is a dominant weight of $V$ and moreover $\dim S(\sigma-\rho)= \dim V_{-\sigma}$.
\end {cor}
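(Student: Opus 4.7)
The plan is to apply Lemma \ref{6.5} to translate the statement into computing a multiplicity in the direct sum decomposition of $V^* \otimes M(-\rho)$ recalled at the start of \ref{6.6.1}, and then to read off that multiplicity by a character calculation in the Grothendieck group.

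First I would apply Lemma \ref{6.5} to obtain
$$\dim S(\sigma-\rho) = [M(-\rho) \otimes V^* : P(\sigma - \rho)],$$
so that it suffices to decide when $P(\sigma - \rho)$ occurs as a direct summand of $V^* \otimes M(-\rho)$ and to compute its multiplicity. I would then extend the stated decomposition $V(\nu)^* \otimes M(-\rho) = \bigoplus_\Sigma P(\Sigma)^{m_\Sigma}$ to an arbitrary finite-dimensional $V$, either by writing $V$ as a direct sum of simples or by rerunning the same argument. Here $\Sigma$ ranges over the $W$-orbits of weights of $V^*$, and by the given description $P(\Sigma)$ is the projective cover of $L(\sigma_\Sigma - \rho)$, where $\sigma_\Sigma$ is the unique element of $\Sigma \cap (-P^+)$. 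Consequently $P(\sigma - \rho)$ occurs as a summand precisely when $\sigma = \sigma_\Sigma$ for some $\Sigma$, equivalently when $-\sigma$ is dominant; and the multiplicity is positive exactly when $-\sigma$ is furthermore a weight of $V$. This already yields the stated non-vanishing criterion.

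To identify $m_\Sigma$, I would match classes in the Grothendieck group. On one hand
$$[V^* \otimes M(-\rho)] = \sum_\tau \dim(V^*_\tau)\, [M(\tau - \rho)],$$
and on the other hand, using the formula $[P(\Sigma)] = \sum_{w \in W/\stab_W \sigma_\Sigma}[M(w.(\sigma_\Sigma - \rho))]$ (which simplifies to $\sum_{\tau \in \Sigma}[M(\tau - \rho)]$ after computing the translated action), one regroups the first expression by $W$-orbits. Since weight multiplicities of a finite-dimensional $U(\mathfrak g)$-module are $W$-invariant, $\dim V^*_\tau = \dim V^*_{\sigma_\Sigma} = \dim V_{-\sigma_\Sigma}$ for every $\tau \in \Sigma$, so
$$[V^* \otimes M(-\rho)] = \sum_\Sigma \dim(V_{-\sigma_\Sigma})\, [P(\Sigma)].$$
Since distinct $W$-orbits are disjoint as sets of weights, the $[P(\Sigma)]$ are supported on pairwise disjoint sets of Verma classes and are therefore linearly independent in the Grothendieck group of $\mathscr O$. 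Comparing with the direct sum decomposition forces $m_\Sigma = \dim V_{-\sigma_\Sigma}$, and combined with the reduction of the first step this gives $\dim S(\sigma - \rho) = \dim V_{-\sigma}$ whenever $-\sigma$ is dominant, proving the dimension formula.

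The main subtlety, rather than a genuine obstacle, is the case in which $\sigma$ lies on a wall of the Weyl chamber so that $\stab_W \sigma$ is non-trivial and $|\Sigma| < |W|$. The $W$-invariance of $\dim V^*_\tau$ together with the explicit expression for $[P(\Sigma)]$ quoted from the preamble makes the character matching go through uniformly regardless of the size of $\stab_W \sigma$, so no separate treatment of the singular case is needed.
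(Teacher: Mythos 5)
Your argument is correct and follows essentially the paper's own route: the paper likewise combines Lemma \ref{6.5} with the decomposition of $V^*\otimes M(-\rho)$ into the projective-injectives $P(\Sigma)$, the multiplicity $\dim V_{-\sigma_\Sigma}$ being encoded there in the phrase ``one for every $W$-orbit in the \emph{multiset} of weights of $V^*$''. Your Grothendieck-group comparison against the quoted formula $[P(\Sigma)]=\sum_{w\in W/\stab_W\sigma}[M(w.(\sigma-\rho))]$ merely makes that multiplicity count explicit; it is a clarification of the same argument rather than a different method.
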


\textbf{Remark.}  In this strange case there are far fewer simple modules than in the generic case; but their dimensions are given by a similar formula, namely $\dim S(\nu)=\dim V_{-(\nu+\rho)}$, where $-(\nu+\rho)$ runs over the dominant weights of $V$.

\subsection{}\label{6.7}

We remark that Lemma \ref {6.5} gives a way to calculate the dimensions of the simple $B_\lambda$ modules using the values at $q=1$ of the Kazhdan-Lusztig polynomials.  Indeed the latter determine the multiplicity $[M(\mu):L(\nu)]$ of the simple module $L(\nu)$ in the Jordan-Holder series of the Verma module $M(\mu)$.  On the other hand every projective module $P \in \mathscr O$ admits a Verma flag, and we denote by $[P:M(\mu)]$, the number of occurrences of $M(\mu)$ in a Verma flag for $P$, which through formal character is independent of the choice of the flag.  Moreover by BGG reciprocity (\cite {BGG} and \cite {CI}) one has $[P(\nu):M(\mu)]=[M(\mu):L(\nu)]$, for all $\mu, \nu \in \mathfrak h^*$ and as is well-known that with respect to the order relation $<$, the matrix with these entries is triangular with ones on the diagonal, hence invertible.  On the other hand it is clear that
$$\sum_{\nu \in \Lambda}[M(\lambda)\otimes V^*:P(\nu)][P(\nu):M(\mu)]=[M(\lambda)\otimes V^*:M(\mu)]= \dim V_{\lambda-\mu},$$
from which the right hand side in Lemma \ref {6.5} may be computed.  Alternatively one may use the Kazhdan-Lusztig polynomials and BGG reciprocity to calculate the formal characters of $P(\nu)$ and combine this information with the easily calculated formal character of $M(\lambda)\otimes V^*$.

\section{A Further Equivalence of Categories}

In the following section we express the conclusions of our main theorem in a stronger and more formal manner.

\subsection{}\label{7.2}

Recall Remark \ref {3.4}.  Let $\mathscr G$ denote the covariant functor from the Harish-Chandra category $\mathscr H^V_\lambda$ of $A^\ltimes-U_\lambda$ modules to the category $\mathscr B_\lambda$ of finite dimensional $B_\lambda$ modules by taking $\mathfrak k$ invariants.  Since $\mathfrak k$ acts locally finitely and is semisimple (so its action is reductive) it follows that $\mathscr G$ is an exact functor.  Combining Theorem \ref {1.7} and Lemma \ref {5.3} we obtain a result which is very analogous to the combination of Lemma \ref {3.3} and Proposition \ref {3.5}.  The proof is also similar.

\begin {thm}  Take $H \in \mathscr H^V_\lambda$ simple and let $P$ denote its projective cover.

\

(i)   Either $\mathscr GH=0$, or $\mathscr GH$ is simple.  Moreover all the simple $B_\lambda$ modules are so obtained.

\

(ii)  If $\mathscr GH \neq 0$, then its projective cover is $\mathscr GP$.
\end {thm}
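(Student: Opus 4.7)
The plan is to read off both assertions from the category equivalence $\mathscr T: \mathscr H_\lambda \to \mathscr H^V_\lambda$ of Proposition \ref{2.6}, combined with the Bernstein--Gelfand analysis of simples and their projective covers in $\mathscr H_\lambda$ (Lemma \ref{3.3}, Proposition \ref{3.5}) and the classification of simple $B_\lambda$ modules together with their projective covers (Theorem \ref{1.7} and Lemma \ref{5.3}). Throughout, the functor $\mathscr G = (-)^{\mathfrak k}$ is exact, since $\mathfrak k$ is semisimple and acts locally finitely on every $H \in \mathscr H^V_\lambda$.

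For (i), I would combine Lemma \ref{3.3} and Proposition \ref{2.6} to write any simple $H \in \mathscr H^V_\lambda$ in the form $H = \mathscr F(V \otimes L(\mu)) = F(M(\lambda), V \otimes L(\mu))$ for some $\mu \in \Lambda$, whence
$$\mathscr GH = F(M(\lambda), V\otimes L(\mu))^{\mathfrak k} = \Hom_{U(\mathfrak g)}(M(\lambda), V \otimes L(\mu)) = S(\mu).$$
Theorem \ref{1.7}(ii) says this is zero or simple. For exhaustion, Theorem \ref{1.7}(i) states that every simple $B_\lambda$ module is some $S(\mu)$; if $S(\mu) \neq 0$ then $H := \mathscr F(V \otimes L(\mu))$ is non-zero (else $\mathscr GH = 0$) and hence simple in $\mathscr H^V_\lambda$ by Lemma \ref{3.3}, with $\mathscr GH = S(\mu)$.

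For (ii), suppose $\mathscr GH = S(\mu) \neq 0$. Frobenius reciprocity gives $S(\mu) \iso \Hom_{U(\mathfrak g)}(V^*\otimes M(\lambda), L(\mu))$, so $S(\mu)\neq 0$ forces the projective cover $P(\mu)$ of $L(\mu)$ in $\mathscr O$ to appear as an indecomposable direct summand of $V^* \otimes M(\lambda)$. Proposition \ref{3.5}(iii) then identifies the projective cover of $\mathscr F L(\mu)$ in $\mathscr H_\lambda$ as $\mathscr F P(\mu)$. Because $\mathscr T$ is an equivalence (Proposition \ref{2.6}), it carries projective covers of simples to projective covers of simples; invoking the natural bimodule isomorphism $V \otimes F(M(\lambda), -) \iso F(M(\lambda), V \otimes -)$ of \ref{3.1}$(*)$, the projective cover of $H = \mathscr T \mathscr F L(\mu)$ in $\mathscr H^V_\lambda$ is
$$P = V \otimes \mathscr F P(\mu) \iso \mathscr F(V \otimes P(\mu)) = F(M(\lambda), V \otimes P(\mu)).$$
Applying $\mathscr G$ yields $\mathscr G P = \Hom_{U(\mathfrak g)}(M(\lambda), V \otimes P(\mu))$, which in the notation of Lemma \ref{5.3} (with $P_i = P(\mu)$) is $Q_i$. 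By Lemma \ref{5.3}, $Q_i$ is the projective cover of $S_i = S(\mu) = \mathscr GH$, establishing (ii).

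The only delicate step is the transport of projective covers under $\mathscr T$: verifying that $P = \mathscr T(\mathscr F P(\mu))$ really is the projective cover of $H$ in $\mathscr H^V_\lambda$. Abstractly this is automatic from Proposition \ref{2.6}, since equivalences of categories preserve both projectivity and indecomposability, hence projective covers. For a direct verification, Corollary \ref{3.6} already identifies $H$ as the unique simple quotient of $\mathscr F(V \otimes P(\mu))$, and an argument parallel to that of Lemma \ref{3.4.2} shows $\mathscr F(V \otimes P(\mu))$ is projective in $\mathscr H^V_\lambda$ by realising it as a direct summand of $\mathscr F(V \otimes V^* \otimes M(\lambda)) \iso V \otimes V^* \otimes U_\lambda$.
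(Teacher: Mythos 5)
Your argument is correct and is exactly the route the paper takes: the paper proves this theorem by the one-line remark that it follows from ``combining Theorem \ref{1.7} and Lemma \ref{5.3}'' by an argument parallel to Lemma \ref{3.3} and Proposition \ref{3.5}, and your write-up simply supplies the details of that combination (via Proposition \ref{2.6}, Lemma \ref{3.3} and the identification $\mathscr F(V\otimes P(\mu))^{\mathfrak k}=Q_i$). No gaps; your closing paragraph on transporting projective covers through the equivalence $\mathscr T$ addresses the only point the paper leaves implicit.
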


\subsection{}\label{7.3}

Let $\mathscr G'$ denote the covariant functor from the category $\mathscr B_\lambda$ of finite dimensional $B_\lambda$ modules to the category of $A^\ltimes-U_\lambda$ modules given by $\mathscr G'M:=A \otimes_BM$.  By virtue of Corollary \ref {3.4} we can write the conclusion of Proposition \ref {4.2} in the form

\begin {lemma} For all $H \in \mathscr H^V_\lambda, N \in \mathscr B_\lambda$ one has an isomorphism
$$Hom(N,\mathscr GH) \iso Hom(\mathscr G^\prime N,H),$$
where $Hom$ means homomorphism in the category.

\end {lemma}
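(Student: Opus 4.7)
The plan is to derive this directly from Proposition \ref{4.2} once we have identified $\mathscr G H$ on the right hand side of that proposition with $\Hom_{U(\mathfrak g)}(M(\lambda), M)$ for a suitable $M$, and identified the left hand side with $\Hom(\mathscr G' N, H)$.

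First, given $H \in \mathscr H^V_\lambda$, Corollary \ref{3.4} provides some $M \in \mathscr O^V_\Lambda$ together with an $A^\ltimes - U(\mathfrak g)$ bimodule isomorphism $H \iso F(M(\lambda), M)$ (namely $\theta^-_H$). Substituting $M' = M$ in Proposition \ref{4.2} yields
$$\Hom_{A^\ltimes - U(\mathfrak g)}(A \otimes_B N, F(M(\lambda), M)) \iso \Hom_B(N, \Hom_{U(\mathfrak g)}(M(\lambda), M)).$$
Transporting the first factor via $\theta^-_H$, the left hand side becomes $\Hom_{A^\ltimes - U_\lambda}(\mathscr G' N, H)$, which is exactly $\Hom(\mathscr G' N, H)$ in the category $\mathscr H^V_\lambda$.

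It remains to identify the right hand side with $\Hom(N, \mathscr G H)$. For this recall that the $\mathfrak k$ action on $\Hom(M(\lambda), M)$ is given by $(x\cdot \psi)(m) = x\psi(m) - \psi(xm)$, whose invariants are precisely the $U(\mathfrak g)$-equivariant maps, so
$$\Hom_{U(\mathfrak g)}(M(\lambda), M) = \Hom(M(\lambda), M)^{\mathfrak k} = F(M(\lambda), M)^{\mathfrak k} \iso H^{\mathfrak k} = \mathscr G H,$$
where the penultimate isomorphism is obtained by applying $(\cdot)^{\mathfrak k}$ to $\theta^-_H$ (valid since $\mathfrak k$ acts reductively). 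One checks that this identification is one of $B_\lambda$ modules, given that the $B_\lambda = A^{\mathfrak k}_\lambda$ action on $\Hom_{U(\mathfrak g)}(M(\lambda), M)$ introduced in \ref{1.7} is just the restriction of the $A^\ltimes$ action used in \ref{2.2}. Combining the two identifications gives the desired isomorphism.

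There is no serious obstacle here: the content of the lemma is essentially a repackaging of Proposition \ref{4.2} once the $F(M(\lambda), \cdot)$ realization of arbitrary objects of $\mathscr H^V_\lambda$ from Corollary \ref{3.4} is in hand. The only minor point to verify, which I glossed over above, is that the two $B_\lambda$ module structures on $\Hom_{U(\mathfrak g)}(M(\lambda), M)$ (namely the one arising through Proposition \ref{4.2} as a Hom-space, and the one arising as $(F(M(\lambda), M))^{\mathfrak k}$) agree; but this is immediate from the definitions and the observation in \ref{1.2} that the left $U(\mathfrak g)$ action on $A$ coincides with left multiplication by $\Id_V \otimes U(\mathfrak g)$ in the smash product.
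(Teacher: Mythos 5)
Your proposal is correct and follows the paper's own (very brief) argument exactly: the paper simply states that "by virtue of Corollary \ref{3.4} we can write the conclusion of Proposition \ref{4.2} in the form" of the lemma, which is precisely your substitution of $H \iso F(M(\lambda),M)$ via $\theta^-_H$ into Proposition \ref{4.2} together with the identification $\mathscr G H = F(M(\lambda),M)^{\mathfrak k} = \Hom_{U(\mathfrak g)}(M(\lambda),M)$. Your added care about matching the two $B_\lambda$ module structures is a sound elaboration of a point the paper leaves implicit.
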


\subsection{}\label{7.4}

The functor $\mathscr I:= \mathscr G \mathscr F \mathscr T$ is an exact covariant functor from $\mathscr O_\Lambda$ to $\mathscr B_\lambda$ given by

$$\mathscr IM=\Hom_{U(\mathfrak g)}(M(\lambda),V\otimes M).$$
By Proposition \ref {2.6}, Lemma \ref {3.3}. Proposition \ref {3.5} and Theorem \ref {7.2}, it takes a simple module to a simple module or zero and in the former case takes its projective cover to the projective cover of its image.

Set $\mathscr I':= \mathscr T' \mathscr F' \mathscr G'$.  It is a right exact covariant functor from $\mathscr B_\lambda$ to $\mathscr O_\Lambda$. Combining \ref {2.3}$(*)$, Lemma \ref {3.1}(ii) and Lemma \ref {7.3}, we obtain

\begin {lemma} For all $M \in \mathscr O_\Lambda, N \in \mathscr B_\lambda$ one has an isomorphism
$$Hom(N,\mathscr IM) \stackrel{\Psi}{\iso} Hom(\mathscr I^\prime N,M),$$
where $Hom$ means homomorphism in the category.

\end {lemma}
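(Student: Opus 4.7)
The plan is to obtain $\Psi$ by chaining together three Frobenius-reciprocity isomorphisms, one for each of the pairs $(\mathscr G',\mathscr G)$, $(\mathscr F',\mathscr F)$, $(\mathscr T',\mathscr T)$ whose composites are $\mathscr I'$ and $\mathscr I$. I begin with
$$\Hom_{\mathscr B_\lambda}(N,\mathscr I M)=\Hom(N,\mathscr G\mathscr F\mathscr T M).$$

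First I would peel off $\mathscr G$ by applying Lemma \ref{7.3} to $H:=\mathscr F\mathscr T M$. This requires $H\in\mathscr H^V_\lambda$, which holds since $\mathscr T M=V\otimes M\in\mathscr O^V$ by Corollary \ref{2.4}(iii) and $\mathscr F$ carries $\mathscr O^V$ into $\mathscr H^V_\lambda$ by \ref{3.1}. This yields $\Hom(\mathscr G' N,\mathscr F\mathscr T M)$.

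Next I would peel off $\mathscr F$ using Lemma \ref{3.1}(ii) applied to $\mathscr G' N=A\otimes_B N$, which belongs to $\mathscr H^V_\lambda$ by \ref{4.1}; this produces $\Hom(\mathscr F'\mathscr G' N,\mathscr T M)$. Finally I would peel off $\mathscr T$: Proposition \ref{2.6}, whose content is exactly the Frobenius reciprocity \ref{2.3}$(*)$ (in the $A^\ltimes$ version of Corollary \ref{2.4}), asserts that $\mathscr T$ and $\mathscr T'$ are mutually inverse equivalences; hence for any $Y\in\mathscr O^V$ one has $\Hom(Y,\mathscr T M)\iso\Hom(\mathscr T' Y,\mathscr T'\mathscr T M)=\Hom(\mathscr T' Y,M)$. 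Setting $Y=\mathscr F'\mathscr G' N$ delivers $\Hom(\mathscr T'\mathscr F'\mathscr G' N,M)=\Hom(\mathscr I' N,M)$, and the composite is the required $\Psi$.

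The three isomorphisms are natural in $M$ and $N$, so only a bookkeeping check remains: one must confirm that each intermediate object lives in the category where the corresponding adjunction is stated, and in particular that $\mathscr I' N$ lies in $\mathscr O_\Lambda$ rather than merely in $\mathscr O$. This last point follows because $\mathscr G' N\in\mathscr H^V_\lambda$ gives $\mathscr F'\mathscr G' N\in\mathscr O^V_\Lambda$ by Corollary \ref{3.4}, whence $\mathscr T'\mathscr F'\mathscr G' N\in\mathscr O_\Lambda$ via the equivalence of Proposition \ref{2.6}. I do not foresee any genuine obstacle; the tracking of categories across the three adjunctions is the only mildly delicate step.
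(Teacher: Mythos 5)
Your proof is correct and is essentially the paper's own argument: the paper proves the lemma in one line by "combining \ref{2.3}$(*)$, Lemma \ref{3.1}(ii) and Lemma \ref{7.3}," i.e.\ by chaining the three adjunctions for $(\mathscr G',\mathscr G)$, $(\mathscr F',\mathscr F)$ and $(\mathscr T',\mathscr T)$ exactly as you do. Your additional bookkeeping that the intermediate objects lie in $\mathscr H^V_\lambda$, $\mathscr O^V$ and $\mathscr O_\Lambda$ is a correct filling-in of details the paper leaves implicit.
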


\subsection{}\label{7.5}

As in \ref {3.2}, we obtain for all $N \in \mathscr B_\lambda$, a map $\psi^-_N:N\rightarrow \mathscr I \mathscr I' N$ given by $\psi^-_N:=\Psi^{-1}(\Id_{\mathscr I'N})$  and for all $M \in \mathscr O_\Lambda$ a map $\psi^+_M: \mathscr I' \mathscr I M \rightarrow M$, given by $\psi^+_M:=\Psi(\Id_{\mathscr IM})$.

\begin {lemma}

(i) For all $P \in \mathscr O_\Lambda$ projective, $\psi^+_P$ is an isomorphism of $\mathscr O_\Lambda$ modules.

\

(ii) For all $N \in \mathscr B_\lambda$, $\psi^-_N$ is an isomorphism of $\mathscr B_\lambda$ modules.
\end {lemma}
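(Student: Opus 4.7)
The plan for both parts is to exploit the factorization of the composite adjunction $(\mathscr{I}', \mathscr{I})$ through the three constituent adjunctions $(\mathscr{T}', \mathscr{T})$ (Proposition \ref{2.6}), $(\mathscr{F}', \mathscr{F})$ (Lemma \ref{3.1}(ii)) and $(\mathscr{G}', \mathscr{G})$ (Lemma \ref{7.3}). Unwinding $\Psi$ through this chain of Frobenius reciprocities identifies $\psi^-_N$ (resp.\ $\psi^+_P$) with the composition of the three constituent units (resp.\ co-units), evaluated at the appropriate objects. It then suffices to verify that each factor is an isomorphism.

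For part (ii), $\psi^-_N$ factors as
$$
N \stackrel{(*)}{\iso} \mathscr{G}\mathscr{G}'N \xrightarrow{\mathscr{G}(\theta^-_{\mathscr{G}'N})} \mathscr{G}\mathscr{F}\mathscr{F}'\mathscr{G}'N = \mathscr{I}\mathscr{I}'N,
$$
where $(*)$ is the unit of $(\mathscr{G}', \mathscr{G})$, an iso by the identification $(A \otimes_B N)^\mathfrak{k} = N$ of \ref{5.2}$(*)$; the middle arrow is the unit $\theta^-_{\mathscr{G}'N}$ of $(\mathscr{F}', \mathscr{F})$, which is an iso by Corollary \ref{3.4} (since $\mathscr{G}'N \in \mathscr{H}^V_\lambda$) preserved by the exact functor $\mathscr{G}$ (\ref{7.2}); and the final equality uses $\mathscr{T}\mathscr{T}' = \mathrm{id}$ on $\mathscr{O}^V$ from Proposition \ref{2.6}. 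Concatenation establishes (ii).

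For part (i), $\psi^+_P$ factors dually as
$$
\mathscr{I}'\mathscr{I}P \xrightarrow{\mathscr{T}'\mathscr{F}'(\mu)} \mathscr{T}'\mathscr{F}'\mathscr{F}(V \otimes P) \xrightarrow{\mathscr{T}'(\theta^+_{V \otimes P})} \mathscr{T}'\mathscr{T}P \iso P,
$$
where $\mu : \mathscr{G}'\mathscr{G}H \to H$ is the multiplication map at $H = F(M(\lambda), V \otimes P)$. For $P = P_i$ an indecomposable summand of $V^* \otimes M(\lambda)$, Lemma \ref{5.1} shows $\mu$ is an iso, while Proposition \ref{3.5}(ii) (applied to the summand $V \otimes P_i \subset (V \otimes V^*) \otimes M(\lambda)$) shows $\theta^+_{V \otimes P_i}$ is an iso; the last arrow is an iso by Proposition \ref{2.6}. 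Additivity of all six functors extends this to direct sums of such $P_i$'s.

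The main obstacle in part (i) is treating indecomposable projective factors $P(\mu) \in \mathscr{O}_\Lambda$ with $\mathscr{F}L(\mu) = 0$, i.e., $\mu$ non-minimal in its $\stab_{W_\lambda.}\lambda$-orbit (Lemma \ref{3.4.1}, Lemma \ref{6.2}), which are \emph{not} direct summands of any $V' \otimes M(\lambda)$. For these I would argue separately, using the Jantzen-type analysis of Lemma \ref{6.2} together with Lemma \ref{6.1} to show that the ``bad'' composition factors of $V \otimes P(\mu)$ contribute trivially to both $\mathscr{F}(V \otimes P(\mu))$ and to $\mathscr{G}\mathscr{F}(V \otimes P(\mu))$, so that the factorization above reduces to the good summands and $\psi^+_{P(\mu)}$ remains an isomorphism as claimed.
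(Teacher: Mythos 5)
For the projectives that Lemma \ref{5.1} actually covers --- direct sums of indecomposable summands $P_i$ of $V^*\otimes M(\lambda)$ --- your factorization of $\psi^+$ into the three constituent counits is exactly the paper's argument: the paper likewise deduces $\mathscr I'Q(\mu)=P(\mu)$ from Lemma \ref{5.1} together with Proposition \ref{3.5}(ii) and the equivalence $\mathscr T'\mathscr T\cong\Id$. Your treatment of (ii) is, however, a genuinely different and cleaner route: the paper obtains (ii) first only for the projective $B_\lambda$ modules $Q(\mu)$ (as a formal consequence of (i)) and then extends to arbitrary $N\in\mathscr B_\lambda$ by taking the first two steps of a projective resolution and a five-lemma argument, whereas your composite $N\iso\mathscr G\mathscr G'N\iso\mathscr G\mathscr F\mathscr F'\mathscr G'N$ --- using \ref{5.2}$(*)$, Corollary \ref{3.4} applied to $\mathscr G'N=A\otimes_BN\in\mathscr H^V_\lambda$, and the fact that functors preserve isomorphisms --- settles every $N$ at once. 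That is a real simplification and is worth keeping.

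The last paragraph of your part (i) is a genuine gap, and the proposed patch cannot succeed, because the unrestricted statement fails for indecomposable projectives $P(\mu)$ that are not direct summands of $V^*\otimes M(\lambda)$ for the \emph{fixed} $V$: for such $P(\mu)$ the counit $\psi^+_{P(\mu)}$ is in general a proper inclusion rather than an isomorphism. Take $\mathfrak g=\mathfrak{sl}(2)$, $V=\mathbb C$ trivial, $\lambda=0$ and $P=P(-2)$. Then $\mathscr IP=\Hom_{U(\mathfrak g)}(M(0),P(-2))$ is one-dimensional, $\mathscr G'\mathscr IP=U_0$, $\mathscr I'\mathscr IP\cong M(0)$, and $\psi^+_P$ is the embedding of $M(0)$ as a proper submodule of $P(-2)$. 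Note that here $\lambda$ is regular, so every $\mu$ is minimal and $F(M(\lambda),L(\mu))\neq 0$; the dividing line is therefore not the minimality criterion of Lemma \ref{6.2} (which concerns the functor built from \emph{all} finite-dimensional modules) but whether $P(\mu)$ occurs in $V^*\otimes M(\lambda)$ for the particular $V$ defining $\mathscr I$. The paper's own proof tacitly makes the same restriction --- it begins ``Suppose $Q(\mu)\neq 0$,'' with $Q(\mu)$ as in \ref{5.3}, i.e.\ attached to a summand of $V^*\otimes M(\lambda)$ --- so (i) should be read for that class of projectives; you should state the restriction explicitly rather than attempt to remove it.
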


\begin {proof}  Suppose $Q(\mu)\neq 0$.  It is immediate that $\mathscr IP(\mu)=Q(\mu)$.  On the other hand $\mathscr I'Q(\mu)=P(\mu)$, obtains from Lemma \ref {5.1}.  Hence (i) and (ii) hold for projective modules in $\mathscr B_\lambda$.  Then by taking the first two steps of a projective resolution of $N \in \mathscr B_\lambda$, a standard argument (as in \cite [Prop. 8.4.4(i)]{J2}) gives the required assertion.
\end {proof}

\subsection{}\label{7.6}

The natural analogues of the Verma modules in the category $\mathscr B_\lambda$ of finite dimensional $B_\lambda$ modules are the non-vanishing $N(\mu):=\mathscr I M(\mu)$. For example just as the formal character of a Verma module is easy to compute, it is immediate from Lemma \ref {6.1}, that
$$\dim N(\mu)= \dim V_{\lambda-\mu}.$$

Recall that $S(\nu) =\mathscr I L(\nu)$.

\begin {cor}  Assume that $N(\mu) \neq 0$ and $S(\nu)\neq 0$.  There is a canonical vector space isomorphism
$$\Ext^*_\mathscr O(M(\mu),L(\nu))\iso \Ext^*_{\mathscr B_\lambda}(N(\mu),S(\nu)). \eqno {(*)} $$
\end {cor}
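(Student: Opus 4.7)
The plan is to compute both sides via a projective resolution transported through the functor $\mathscr I$, using Lemmas \ref{7.4} and \ref{7.5} as the bridge.

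First, choose a projective resolution $\cdots \to P_1 \to P_0 \to M(\mu) \to 0$ of $M(\mu)$ in $\mathscr O_\Lambda$. Each $P_i$ is a direct sum of indecomposable projective covers $P(\alpha)$ in $\mathscr O_\Lambda$. I apply the exact covariant functor $\mathscr I$ from \ref{7.4} to obtain a complex
$$ \cdots \to \mathscr I P_1 \to \mathscr I P_0 \to N(\mu) \to 0 $$
which is exact by exactness of $\mathscr I$. Next I check that each $\mathscr I P_i$ is projective in $\mathscr B_\lambda$: by Lemma \ref{7.5} (and the argument in \ref{5.3}) one has $\mathscr I P(\alpha) = Q(\alpha)$, which is either the projective cover of $S(\alpha)$ or zero; in either case projective. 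Hence $\mathscr I P_\bullet \to N(\mu)$ is a (possibly degenerate) projective resolution of $N(\mu)$ in $\mathscr B_\lambda$, which is all we need to compute $\Ext^*_{\mathscr B_\lambda}(N(\mu), S(\nu))$.

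Next I apply $\Hom_{\mathscr B_\lambda}(-, S(\nu))$ termwise. Using $S(\nu) = \mathscr I L(\nu)$ together with the adjunction in Lemma \ref{7.4} gives, for each $i$,
$$ \Hom_{\mathscr B_\lambda}(\mathscr I P_i, \mathscr I L(\nu)) \;\iso\; \Hom_{\mathscr O_\Lambda}(\mathscr I' \mathscr I P_i, L(\nu)). $$
Since each $P_i$ is projective, Lemma \ref{7.5}(i) gives $\psi^+_{P_i}: \mathscr I' \mathscr I P_i \iso P_i$, and this isomorphism is natural in $P_i$. Composing, I obtain an isomorphism of cochain complexes
$$ \Hom_{\mathscr B_\lambda}(\mathscr I P_\bullet, S(\nu)) \;\iso\; \Hom_{\mathscr O_\Lambda}(P_\bullet, L(\nu)). $$
Taking cohomology of both sides and noting that for modules in $\mathscr O_\Lambda$ one has $\Ext^*_{\mathscr O_\Lambda} = \Ext^*_{\mathscr O}$ (blocks of $\mathscr O$ are direct summands), I arrive at the stated canonical isomorphism $\Ext^*_{\mathscr O}(M(\mu), L(\nu)) \iso \Ext^*_{\mathscr B_\lambda}(N(\mu), S(\nu))$.

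The main points requiring care are: (a) that $\mathscr I$ sends a projective in $\mathscr O_\Lambda$ to a projective (possibly zero) object in $\mathscr B_\lambda$, which follows from Lemma \ref{7.5} applied to each indecomposable summand $P(\alpha)$; and (b) the naturality of $\psi^+_{P_i}$, so that one obtains an isomorphism of complexes rather than merely a termwise isomorphism — this is automatic from the formula $\psi^+_P = \Psi(\Id_{\mathscr I P})$. No further delicacy arises because $\mathscr I$ is exact, so one need not worry about spectral sequences. The only step that could seem to pose a difficulty is the possibility that $\mathscr I P_i = 0$ for some $i$, but this is harmless since a projective resolution is permitted to contain zero terms and the cohomology is still correctly computed.
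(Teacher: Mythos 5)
Your proposal is correct and follows essentially the same route as the paper: apply the exact functor $\mathscr I$ to a projective resolution of $M(\mu)$, note that it yields a projective resolution of $N(\mu)$, and then use the adjunction of Lemma \ref{7.4} together with $\psi^+_{P_i}:\mathscr I'\mathscr I P_i\iso P_i$ from Lemma \ref{7.5}(i) to identify the two $\Hom$ complexes. Your added remarks on naturality and on the harmlessness of zero terms only make explicit what the paper leaves implicit.
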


\begin {proof}  Indeed if $P^*\rightarrow M(\mu)$ is a projective resolution of
$M(\mu)$, then $\mathscr IP^* \rightarrow \mathscr IM(\mu)$ is a projective resolution of $N(\mu)=\mathscr IM(\mu)$.
Thus by Lemmas \ref {7.4} and \ref {7.5}(i), we obtain $\Ext^*_{\mathscr B_\lambda}(N(\mu),S(\nu)):= \Hom_{\mathscr B_\lambda}(\mathscr IP^*,\mathscr I L(\nu))\iso \Hom_{\mathscr O_\Lambda}(\mathscr I' \mathscr IP^*,L(\nu))\iso \Hom_{\mathscr O_\Lambda}(P^*,L(\nu))=:\Ext^*_{\mathscr O_\Lambda}(M(\mu),L(\nu))$, as required.

\end {proof}

\textbf{Remark.}  As is well-know the information on the left hand side is given by the Kazhdan-Lusztig polynomials and conversely determines these polynomials.  Thus the Corollary gives a second interpretation of these polynomials via the representations of relative Yangians, which we recall are finite dimensional algebras.
\subsection{}\label{7.7}

Of course $\mathscr I$ is not an equivalence of categories because it can happen that $\mathscr IL(\nu)=0$.

We may obtain an equivalence of categories by resorting to direct sums.

Let us replace $V$ by possibly infinite direct sum $\oplus_{i\in I} V_i$ of finite dimensional
$U(\mathfrak g)$ modules and $\End V$ by  $\oplus_{i\in I} \End V_i$ throughout the above. It is practically immediate that the previous theory goes through without significant change. For example the simple $B_\lambda$ modules take the form $\Hom_{U(\mathfrak g)}(M(\lambda),V_i \otimes L(\nu))$ and those modules which are non-zero, are pairwise non-isomorphic.


Now let $V_i:i \in I$ be the set of all finite dimensional simple $U(\mathfrak g)$ modules, up to isomorphism.   Suppose that $F(M(\lambda),L(\nu))\neq 0$.  This means that there is a simple finite dimensional $U(\mathfrak g)$ module $V$ such that $0 \neq \Hom_{U(\mathfrak g)}(V,\Hom (M(\lambda),L(\nu))$.  Yet by a well-known version of Frobenius reciprocity in the context of enveloping algebras (or more generally algebras with a coproduct) the latter is isomorphic to $\Hom_{U(\mathfrak g)}(V\otimes M(\lambda),L(\nu))$, where the coproduct is used to define the action on $\Hom$ and on $\otimes$.   Similarly (but now also using that the coproduct is cocommutative) the latter is isomorphic to $\Hom_{U(\mathfrak g)}(M(\lambda),\Hom(V,L(\nu))$, which since $V$ is finite dimensional is in turn isomorphic to $\Hom_{U(\mathfrak g)} (M(\lambda),V^* \otimes L(\nu))$.  Finally since $V$ is finite dimensional we may replave $V$ by $V^*$ in the above to conclude that $\Hom_{U(\mathfrak g)} (M(\lambda),V \otimes L(\nu))\neq 0$ for some finite dimensional $U(\mathfrak g)$ module $V$. Hence we obtain

\begin {thm}  Under the above replacement, $\mathscr I : \mathscr O_\Lambda \rightarrow \mathscr B_\lambda$ is an equivalence of categories with inverse functor $\mathscr I'$.
\end {thm}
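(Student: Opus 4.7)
The plan is to exhibit $\mathscr I'$ as a two-sided inverse of $\mathscr I$ by verifying that the unit and counit of the adjunction $\mathscr I' \dashv \mathscr I$ (given by Lemma \ref{7.4}) are natural isomorphisms. In this adjunction the unit is $\psi^-_N : N \to \mathscr I \mathscr I' N$ and the counit is $\psi^+_M : \mathscr I' \mathscr I M \to M$ (see \ref{7.5}). The point of the replacement $V = \oplus_{i \in I} V_i$ is precisely to guarantee that for every simple $L(\nu) \in \mathscr O_\Lambda$ with $F(M(\lambda),L(\nu)) \neq 0$ one has $\mathscr I L(\nu) \neq 0$, so that no simple in $\mathscr O_\Lambda$ is killed by $\mathscr I$ and, dually, every indecomposable projective $P(\mu) \in \mathscr O_\Lambda$ satisfies $Q(\mu) := \mathscr I P(\mu) \neq 0$. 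Thus the hypothesis of Lemma \ref{7.5} is met for \emph{every} projective indecomposable of $\mathscr O_\Lambda$ rather than only for those arising in \ref{5.3} from a fixed finite-dimensional $V$.

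Lemma \ref{7.5}(ii) already gives that the unit $\psi^-_N$ is an isomorphism for every $N \in \mathscr B_\lambda$. The remaining task is therefore to upgrade Lemma \ref{7.5}(i) from projectives to all objects of $\mathscr O_\Lambda$. For this I would take $M \in \mathscr O_\Lambda$ and choose a projective presentation
\[
P_1 \longrightarrow P_0 \longrightarrow M \longrightarrow 0
\]
in $\mathscr O_\Lambda$, which exists since $\mathscr O$ (hence the subcategory $\mathscr O_\Lambda$) has enough projectives. Recall that $\mathscr I = \mathscr G \mathscr F \mathscr T$ is exact (its three constituents are exact by Propositions \ref{2.6} and \ref{7.2} and Lemma \ref{3.1}(i)) while $\mathscr I' = \mathscr T' \mathscr F' \mathscr G'$ is right exact (it is built out of tensor products and an equivalence). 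Applying $\mathscr I' \mathscr I$ to the presentation and using naturality of $\psi^+$ produces a commutative diagram
\[
\begin{array}{ccccccc}
\mathscr I' \mathscr I P_1 & \longrightarrow & \mathscr I' \mathscr I P_0 & \longrightarrow & \mathscr I' \mathscr I M & \longrightarrow & 0 \\
\downarrow \psi^+_{P_1} & & \downarrow \psi^+_{P_0} & & \downarrow \psi^+_M & & \\
P_1 & \longrightarrow & P_0 & \longrightarrow & M & \longrightarrow & 0
\end{array}
\]
with exact rows and with the two leftmost vertical arrows isomorphisms by Lemma \ref{7.5}(i). The right-exact version of the five lemma then forces $\psi^+_M$ to be an isomorphism.

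Once both $\psi^-_N$ and $\psi^+_M$ are known to be natural isomorphisms, the pair $(\mathscr I, \mathscr I')$ is a mutually inverse equivalence of categories between $\mathscr O_\Lambda$ and $\mathscr B_\lambda$. Fullness and faithfulness of $\mathscr I$ follow formally from the counit isomorphism via the chain $\Hom_{\mathscr O_\Lambda}(M,M') \iso \Hom_{\mathscr B_\lambda}(\mathscr I M, \mathscr I M') \iso \Hom_{\mathscr O_\Lambda}(\mathscr I' \mathscr I M, M')$, the first arrow being the $\mathscr I$-action and the second being the adjunction isomorphism $\Psi$ of Lemma \ref{7.4}; essential surjectivity follows from the unit isomorphism, since every $N$ is isomorphic to $\mathscr I(\mathscr I'N)$.

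I expect the main obstacle to be verifying that the replacement is strong enough for Lemma \ref{7.5}(i) to be available on \emph{every} indecomposable projective in $\mathscr O_\Lambda$ at once. This is where the argument at the very end of \ref{7.7} enters: for each $\nu \in \Lambda$ with $F(M(\lambda),L(\nu)) \neq 0$ the Frobenius reciprocity and dualisation chain there produces a finite-dimensional simple $V_i$ for which $\Hom_{U(\mathfrak g)}(M(\lambda), V_i \otimes L(\nu)) \neq 0$, and after summing all $V_i$ into one $V$ this holds simultaneously, ensuring $Q(\nu) \neq 0$ for the corresponding projective covers $P(\nu)$. Beyond this bookkeeping, the rest of the proof is a routine application of the adjunction together with the five lemma, with no further representation-theoretic input required.
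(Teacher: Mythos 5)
Your proposal is correct and follows essentially the same route as the paper: the same replacement of $V$ by the sum of all finite-dimensional simple modules, the same Frobenius-reciprocity input guaranteeing $\mathscr I L(\nu)\neq 0$ whenever $F(M(\lambda),L(\nu))\neq 0$, and the adjunction of Lemma \ref{7.4} combined with Lemma \ref{7.5}, your five-lemma extension of $\psi^+$ from projectives to arbitrary objects being precisely the ``standard argument'' via projective presentations that the paper leaves implicit in the phrase ``the previous theory goes through without significant change.'' The one caveat, which you inherit from the paper rather than introduce, is that by Lemma \ref{6.2} a non-minimal $\nu\in\Lambda$ still has $\mathscr I L(\nu)=0$ even after the replacement, so your clause ``no simple in $\mathscr O_\Lambda$ is killed'' (and the theorem as literally stated) tacitly assumes $\lambda$ regular or restricts $\mathscr O_\Lambda$ to the subcategory generated by the minimal weights.
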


\end{document}